\def\BibTeX{{\rm B\kern-.05em{\sc i\kern-.025em b}\kern-.08em
    T\kern-.1667em\lower.7ex\hbox{E}\kern-.125emX}}
\newtheorem{lemma}{Lemma}[section]
\newtheorem{proposition}[lemma]{Proposition}
\newtheorem{theorem}[lemma]{Theorem}
\newtheorem{corollary}[lemma]{Corollary}
\newtheorem{definition}[lemma]{Definition}
\newtheorem{example}[lemma]{Example}
\newtheorem{remark}[lemma]{Remark}
\renewcommand\theenumi{\arabic{enumi})}
\newcommand{\Uc}{\ensuremath{\mathcal{U}}}
\pgfplotsset{compat=1.18}
\tikzset{degil/.style={
		decoration={markings,
			mark= at position 0.5 with {
				\node[transform shape] (tempnode) {$\backslash$};
				%\draw[thick] (tempnode.north east) -- (tempnode.south west);
			}
		},
		postaction={decorate}
	}
}
\definecolor{uniwueblue}{HTML}{1F5394}
\pgfplotsset{soldot/.style={color=uniwueblue,only marks,mark=*, mark options={scale=.3mm}}}
\renewcommand{\pb}[1]{{\color{black} #1}} 
\renewcommand{\amc}[1]{{\color{black} #1}} 
\begin{document}
\title{Characterization of input-to-output stability for infinite-dimensional systems}

\author{Patrick Bachmann$^{1,2}$,~\IEEEmembership{Student Member,~IEEE,} Sergey Dashkovskiy$^{2}$,~\IEEEmembership{Senior Member,~IEEE,} and \\
Andrii Mironchenko$^{1}$,~\IEEEmembership{Senior Member,~IEEE} % <-this % stops a space
%\thanks{*A. Mironchenko has been supported by the German Research Foundation (DFG) (grants MI 1886/2-2 and MI 1886/3-1).}
\thanks{*A. Mironchenko has been supported by the Heisenberg grant (MI 1886/3-1) of the German Research Foundation (DFG). \amc{P. Bachmann is supported by DFG (grant MI 1886/5-1).}} 
% <-this % stops a space
\thanks{$^{1}$\pb{P Bachmann and }A. Mironchenko are with the Department of Mathematics, University of Bayreuth, Germany {\tt\small andrii.mironchenko@uni-bayreuth.de}}%
\thanks{$^{2}$P. Bachmann and S. Dashkovskiy are with the Institute of Mathematics, University of W{\"u}rzburg, Germany {\tt\small patrick.bachmann@uni-wuerzburg.de}, {\tt\small sergey.dashkovskiy@uni-wuerzburg.de}}%
}

\maketitle
\thispagestyle{empty}
\pagestyle{empty}

%%%%%%%%%%%%%%%%%%%%%%%%%%%%%%%%%%%%%%%%%%%%%%%%%%%%%%%%%%%%%%%%%%%%%%%%%%%%%%%%
\begin{abstract}
We prove a superposition theorem for input-to-output stability (IOS) of a broad class of nonlinear infinite-dimensional systems with outputs including both continuous-time and discrete-time systems. 
It contains, as a special case, the superposition theorem for input-to-state stability (ISS) of infinite-dimensional systems %from \cite{MiW18b} 
and the IOS superposition theorem for systems of ordinary differential equations \pb{known from the literature.} % from \cite{ISW01}. 

To achieve this result, we introduce and examine several novel stability and attractivity concepts for infinite-dimensional systems with outputs: We prove criteria for the uniform limit property for systems with outputs, several of which are new already for systems with full-state output, we provide superposition theorems for systems which satisfy both the output Lagrange stability (OL) and IOS, give a sufficient condition for OL and characterize ISS in terms of IOS and input/output-to-state stability.
Finally, by means of counterexamples, we illustrate the challenges appearing on the way of extension of the superposition theorems from \pb{the literature} %\cite{MiW18b} and \cite{ISW01} 
to infinite-dimensional systems with outputs.

% We provide a systematic analysis of stability and attractivity properties for a broad class of nonlinear discrete and continuous time infinite-dimensional control systems with outputs with the aim of characterizing input-to-output stability (IOS). Several notions of stability and attractivity for infinite-dimensional systems are introduced to generalize the existing theory on IOS of ordinary differential equations \cite{ISW01} and the input-to-state stability theory for infinite-dimensional systems \cite{MiW18b}, respectively. We highlight the relation of the existing theory \cite{ISW01, MiW18b} to our newly developed theory with the help of several results and counterexamples.
%
%%%%% CDC ABSTRACT %%%%%%%%%
% We characterize input-to-output stability of a general class of both continuous-time and discrete-time infinite-dimensional systems in terms of weaker stability properties. Our results generalize the corresponding criteria for ordinary differential equations achieved by Ingalls et al. \cite{ISW01} and those for infinite-dimensional systems for which the output equals the state \cite{MiW18b}. This way, we investigate the relation between several stability and attractivity properties for infinite-dimensional systems with outputs by providing the according implications and giving counterexamples, respectively. 
\end{abstract}

\begin{keywords}
    Distributed parameter systems; Stability of nonlinear systems; Nonlinear systems; Input-to-state stability; Input-to-output stability
\end{keywords}

%%%%%%%%%%%%%%%%%%%%%%%%%%%%%%%%%%%%%%%%%%%%%%%%%%%%%%%%%%%%%%%%%%%%%%%%%%%%%%%%
%\section{INTRODUCTION}
\section{Introduction}
% \todo[inline]{Outline:}
% \begin{itemize}
%     \item Literature review, especially \cite{ISW01}: investigates IOS for finite-dimensional systems
%     \item Application scenario 2 -- 3 papers (controller, observer?) (check references which cite \cite{ISW01})
%     \item Motivation: $\dim X < \infty  \implies \dim X = \infty$ \\
%     State is not available$\implies$Output is necessary
%     \item Results for ISS (made this \# 1)
% \end{itemize}

% Treating the inputs as disturbances arising due to external perturbations, unmodeled dynamics, quantization errors, etc., 
% the following problems come at the forefront of the research: How to analyze the stability and robustness of the output dynamics of the control system? How to restore the information about the full state of the nonlinear infinite-dimensional system from the information about the outputs in a robust way in spite of acting disturbances? How to analyze the stability of an interconnected system based on the information about the stability of its components?

% If the output is the full state, these questions have been successfully studied within the input-to-state stability (ISS) theory. ISS combines internal asymptotic stability of the system and robustness with respect to external perturbations.

% Start with ISS theory. This is relevant for 
Input-to-state stability (ISS) was first introduced for systems of ordinary differential equations (ODEs) \cite{Son89}, and then developed for other classes of finite-dimensional control systems such as switched \cite{MancillaAguilar2001}, hybrid \cite{Cai2005}, and impulsive systems \cite{Dashkovskiy2013}.
More recently, the ISS theory was extended to infinite-dimensional systems, including time-delay systems \cite{Pepe2006,CKPW23}, partial differential equations (PDEs) \cite{Karafyllis2019} and general evolution equations in Banach spaces \cite{Sch20, Mironchenko2020}. For more details, we refer to the survey \cite{Mironchenko2020}. ISS of infinite-dimensional discrete-time systems was treated, e.g., in \cite{Bachmann2022a, Dashkovskiy2023a}.

Yet, the developments above are confined to systems for which the output equals the state.
A notion extending ISS to systems with outputs is given by input-to-output stability (IOS) introduced for ODE systems in \pb{\cite{JTP94}} (though it was considered by Sontag earlier in the input/output formalism in \cite{Son89}).  
IOS combines the uniform global asymptotic stability of the output dynamics with its robustness w.r.t. external inputs. 
If the output equals to the state, IOS coincides with ISS. We will refer to this case by the term \emph{full-state output}.
Other choices for the output function can be: partial state output, e.g. due to sensor measurements, tracking error, observer error, drifting error from a targeted set, etc. In this context, IOS represents robust stability of a control system with respect to the given errors.
%IOS is closely related to partial stability \cite{Vorotnikov2005} and stability with respect to two measures \cite{Teel2000}.
\pb{In \cite{Vorotnikov2005}, several stability notions for unperturbed systems  with outputs are discussed, e.g., uniform global asymptotic $y$-stability, which can be interpreted as IOS with zero input and partial uniform stability, which is a relaxation of ouput Lagrange stability (OL). Stability with respect to two measures \cite{TeP00} %can be considered to be a stability notion which %is uniform with respect to the input or alternatively, it 
generalizes %set-valued 
%IOS with zero input in the sense that not the norms in $X$ and $Y$ are considered, respectively, but general continuous, positive semidefinite functions.
IOS such that general continuous, positive semidefinite functions on the state and output space, respectively, are considered rather than the norms on both spaces.}

IOS is paramount in numerous applications including multi-agent systems \cite{Liu2013}, coverage controllers \cite{Kennedy2024}, and neural networks \cite{Mei2022}.

%%%%%%%%%%%%%%%%%%%%%%% Literature review %%%%%%%%%%%%%%%%%%%%%%
{\bf IOS theory for ODEs.} Already for ODE systems, the IOS theory is significantly more involved than the ISS theory for systems with full-state output. Lyapunov criteria of IOS have been shown in \cite{Sontag2000} 
 based on some earlier developments in \cite{Sontag1999}. 
 However, the results in \cite{Sontag2000} are obtained
 under the assumption that the system is uniformly globally stable -- which was not required in converse Lyapunov results for ISS. 

 Seminal ISS superposition results for ODE systems from \cite{SoW96} have been extended to the IOS case only under the assumption of %output Lagrange stability (OL)
 OL, which was not needed in the ISS case. In particular, in \cite{ISW01}, it is shown that an ODE system is IOS if it satisfies both OL and the output-limit property (OLIM) (also see \cite{Angeli2004a} for several results omitted in \cite{ISW01}, e.g., Lem. 2.2 and Cor. 2.3).

 %For the special case of ISS, a corresponding result has been shown in \cite{SoW96} and extended to infinite-dimensional systems in \cite{MiW18b}.

% In the finite-dimensional case, IOS can be characterized in terms of the above concepts \cite[Thm.~1]{ISW01}. 
% In particular, if an ODE system satisfies both OL and OLIM then it is IOS. For the special case of ISS, \cite{MiW18b} extends this result to infinite-dimensional systems.

%%%%%%%%%%%%%

% \mir{You state the superposition theorem in the next passage again. Then you twice refer to SGT in trajectories. Why?}

% For ODE systems, IOS has been characterized as a combination of forward completeness, OL and the output-asymptotic gain property (OAG), see \cite[Thm. 1]{ISW01}. In many applications it is much easier to verify these properties than to show IOS directly. 
% These equivalent representations were essential for the development of small-gain theorems in terms of trajectories \cite{Jiang2008}, for the development of Lyapunov theory \cite{Sontag2000} and for understanding of the relationship between IOS and the other stability notions.

% %%%%%%%%%

Trajectory-based small-gain theorems for interconnections of two IOS systems have been obtained in \cite{JTP94} and generalized to interconnections of $n \in \N$ IOS systems in \cite{Jiang2008}. Lyapunov-based small-gain theorems for couplings of $n$ interconnected IOS systems have been reported in \cite[Sec. 3.3.4]{Rueffer2007}.

%%%%%%%%% Applications

%\mir{Any further applications in ODE setting? Can one find some ODE applications in the survey by Sontag 2008?}\todo[inline]{I added a few references. Also \cite{Jin2022} on an Consensus problem and \cite{Wang2011} on interconnections on microgrids might be an option though the last does not seem very reliable. In Sontag's review \cite{Sontag2008} there was no application scenario for IOS.}

\textbf{Infinite-dimensional IOS theory.}
In time-delay context, IOS serves for controller design in networked systems, which is applied to teleoperating systems, though in this case only weaker than IOS properties for the control system are obtained \cite{Polushin2013}. The work \cite{Gruene2022} develops finite-dimensional observer-based controllers for a linear reaction-diffusion system. In \cite{KaJ11, Bao2018}, small-gain theorems are presented which are tailored for the so-called maximum formulation of the IOS property. For time-delay systems, Lyapunov characterizations of IOS were developed (cf. \cite{CKPW23}). 

Nevertheless, despite its practical relevance, infinite-dimensional IOS theory remains largely unexplored \cite{Mironchenko2020}.

% In spite of its practical relevance, the theory of \emph{IOS for infinite-dimensional systems} is sparse and only few results exist.

\textbf{Challenges.} 
In addition to obstructions encountered in characterizing IOS for ODE systems \cite{ISW01} (such as the need for analysis of OL), infinite-dimensionality of control systems leads to several additional challenges.
In \cite[Ex. 1]{MiW18b}, it is shown directly that OLIM and OL are insufficient to imply IOS for the case of linear full-state output infinite-dimensional systems due to the lack of uniformity of the limit property.
Similarly, the IOS characterization for ODE systems in terms of OL and the output-asymptotic gain property (OAG) cannot be extended to the infinite-dimensional setting in the same formulation, even in the ISS case with full-state output, because trajectory-wise asymptotic stability does not imply uniform asymptotic stability as argued in \cite[Lem. 9]{MiW18b}. 

% Copied from Conference Version
A different problem arises due to the fact that nonlinear forward complete infinite-dimensional systems do not necessarily have bounded reachability sets, in contrast to nonlinear ODE systems \cite{MiW18b}.
As we discuss in Section \ref{sec:counterexamples}, one of the consequences of this problem is the breakdown of the equivalence between several types of uniform asymptotic gain properties, in contrary to the finite-dimensional case.
In view of this, the investigation of the IOS of \emph{infinite-dimensional nonlinear systems} becomes challenging.
% Very different problems arise due to the fact that nonlinear forward complete infinite-dimensional systems do not necessarily have bounded reachability sets, in contrast to nonlinear ODE systems \cite{MiW18b}.
% As we discuss in Section~\ref{sec:counterexamples}, one of the consequences of this problem is the breakdown of the equivalence between several types of uniform asymptotic gain properties.
% These latter effects are highly challenging as they are \emph{both infinite-dimensional and nonlinear} -- and do not occur for ODEs with Lipschitz nonlinearities, and for linear infinite-dimensional systems.

%\pbc{Moved to introduction}

%Moreover, by generalizing the ISS setting of \cite{MiW18b} to IOS, separate state and output-value spaces must both be investigated as well as their link with the additional operator $h$. To this aim, it is necessary to develop new stability notions which consider the behavior on the output-value space and relate it to the state space behavior. Only with these stability notions at hand, the methods introduced in \cite{MiW18b} can be enhanced to systems with outputs.

\textbf{Contribution.} 
Motivated by the infinite-dimensional ISS superposition theorem \cite{MiW18b}, \emph{we characterize the IOS property for infinite-dimensional continuous and discrete-time systems in terms of weaker properties}, such as the output-uniform asymptotic gain property (OUAG), output-uniform local stability (OULS), output continuity at the equilibrium point (OCEP) and other notions. To support this, the relation between OUAG and its variations output-global UAG (OGUAG) and output-complete AG (OCAG) as well as input-to-output practical stability (IOpS) is investigated.

Furthermore, we consider the influence of OL on the IOS property by establishing a superposition theorem for systems which are OL and IOS. We provide a superposition theorem for OL. 
We compare our results applied to the finite-dimensional case with the ones in \cite{ISW01} and show the equivalence of OUAG and OGUAG as well as of several notions of OLIM for ODE systems.
Moreover, our results extend the ISS superposition theorem for distributed parameter systems shown in \cite[Thm. 5]{MiW18b}. We characterize the ISS property in terms of IOS and input/output-to-state stability (IOSS), thereby providing a partial extension of \cite[Prop. 3.1]{JTP94} for infinite-dimensional systems.
%We systematically investigate the relations between the above properties in the infinite-dimensional setting by providing the according implications and giving counterexamples, respectively. 
We point out differences between the ISS case and general IOS case by several (counter)examples.
\pb{The main results are depicted in Figure \ref{fig:mainResult}.}

\textbf{Significance.} 
IOS superposition theorems are a meta-tool that helps to prove other important theoretical results including Lyapunov theory and small-gain theorems.
%Recently, in \cite{MWC24}, ISS characterizations for time-delay systems have been developed. Using a synergy of ideas from \cite{MWC24} with our work, one can obtain tight IOS characterizations tailored for time-delay systems.
Recently, in \cite{MWC24b}, ISS characterizations have been used to prove  Lyapunov-Krasovskii theorems with pointwise dissipation for ISS of nonlinear time-delay systems. Our IOS characterizations can be a basis that will help to extend those results to IOS Lyapunov-Krasovskii theorems. 

% \mir{The next passage should be rewritten in other words.}

% Having these results, we would be able to show that the IOS small-gain theorems proved for couplings of $n$ time-delay systems in \cite{Polushin2013} establish not only weak input-to-output stability (as claimed in \cite{Polushin2013}) but actually the IOS property. 

% And one should not stop on that. Based on general ISS superposition theorems \cite{MiW18b}, ISS small-gain theorems for infinite networks with infinite-dimensional ISS components have been derived in \cite{Mironchenko2021c}. These results, however, do not use the information about delays in the interconnection structure of the network. Our tight characterizations of ISS could help to obtain stronger ISS small-gain theorems tailored for time-delay systems, with possible extensions for networks of input-to-output stable systems.

%These IOS characterizations can be applied to establish small gain theorems for $n$ coupled time-delay systems. As a result, the systems considered in \cite{Polushin2013} could be proven to be IOS instead of merely weakly IOS. 

These IOS characterizations can be applied to extend a small-gain theorem to infinite networks of infinite-dimensional IOS subsystems. For ISS, \cite{Mironchenko2021c} provides such a general small-gain theorem based on the ISS superposition theorems \cite{MiW18b}. %Our work will serve as a basis for IOS small-gain theorems for infinite networks.\pbc{???} 
From a more remote perspective, by exploiting information about time delays in the interconnection structure of the network, one could formulate 
stronger IOS small-gain theorems tailored for time-delay systems, which will go far beyond existing results even in the ISS case.

Preliminary results of this work were presented in \cite{BDM24a}.

\textbf{Organization.} 
The rest of the paper unfolds as follows. In Section \ref{sec:preliminaries}, we give the preliminaries and introduce the main concepts to be investigated in this article. In Section~\ref{sec:superpositionThms}, we provide our main results such as IOS Superposition Theorem~\ref{thm:IOSequivalences}, characterizations of OCAG (Proposition \ref{prop:OCAGequivalences}) and equivalences for IOS $\land$ OL (Proposition \ref{prop:OLIOSequivalences}). Furthermore, we give a sufficient condition for OL in Lemma \ref{lem:OOULIMandLocalOLandOBORStoGlobalOL}. We apply our results to finite-dimensional ODE systems and compare them with the existing literature on finite-dimensional IOS theory in Section \ref{sec:finiteDimIOSTheory}. Precisely, Proposition \ref{prop:OLIMforFiniteDimensions} shows that OLIM and the related notions output-uniform LIM and output-global uniform LIM are equivalent for ODE systems and in Proposition \ref{prop:IOSsuperpositionsFiniteDim}, we characterize IOS and other properties. Section \ref{sec:counterexamples} is dedicated to counterexamples in order to demonstrate the relation between certain stability notions and emphasize the difficulties in the infinite-dimensional IOS theory. We conclude with a summary and outlook in Section~\ref{sec:conclusion}.

\textbf{Notation.} 
We denote the nonnegative integers by $\N_0$, the natural numbers by $\N$, the real numbers by $\R$, the nonnegative real numbers by $\pb{\R_+}$ and the \pb{open} balls of radius $r$ around zero in Banach spaces $X$, $U$ and $\U$, respectively, by $B_r$, $B_{r,U}$ and $B_{r,\U}$ as well as the \pb{open} ball of radius $r$ around a set $K$ in a Banach space $X$ by 
\[
B_r(K) \coloneq \braces{x \in X\,\middle|\, \exists x_0 \in K \colon \norm{x - x_0}_X < r}.
\]
For a subset $\Omega$ of a Banach space, we denote its set complement by $\Omega^C$ and its closure by $\overline{\Omega}$. We define the standard classes of comparison functions (cf. \cite[p. xvi]{Mir23}) by
\begin{align*}
    \KK &\coloneq \{\gamma: \R_+ \to \R_+ \,|\, \gamma(0) = 0,\ \gamma \text{ is continuous} \\
    &\hspace{3cm}\text{and strictly increasing}\}, \\
    \KK_\infty &\coloneq \{\gamma\in \KK \,|\, \gamma \text{ is unbounded}\}, \\
    \LL &\coloneq \{\gamma: \R_+ \to \R_+ \,|\, \gamma \text{ is continuous and decreasing}  \\
    &\hspace{3cm}\text{with }\lim_{t \to \infty}\gamma(t) = 0\}, \\
    \KK\LL &\coloneq \{\beta \in \R_+ \times \R_+ \to \R_+ \,|\, \beta(\ph,t) \in \KK, \ \forall t \geq 0, \\
    &\hspace{3cm}\beta(r, \ph) \in \LL,\ \forall r > 0\}.
\end{align*}

%Let $Z$ be a Banach space and $\mathcal Z$ a linear subspace of $\braces{ z\,\middle|\, z \colon I \to Z}$. Then, for $f \in \mathcal Z$ and $a,b \in I\colon a \leq b$ we define the restriction $f|_{[a,b]}\colon I \to Z$ by
Let \pb{$I \in \{\N_0,\R_+\}$ denote the positive time set}, $Z$ be a Banach space and $f\colon I \to Z$. We define for $a,b \in I\colon a \leq b$ \amc{the order interval $[a,b]:=\{z \in I: a \leq z\leq b\}$, and} a restriction $f|_{[a,b]}\colon I \to Z$ by
\begin{align*}
    f|_{[a,b]}(s) \coloneq 
    \begin{cases}
        f(s), &\text{if } s \in [a,b], \\
        0, & \text{else.} 
    \end{cases}
\end{align*}
By $\LL^\infty(I, Z)$, we denote the Lebesgue space of strongly measurable functions $f \colon I \to Z$ with norm $\norm{f}_\infty \coloneq \esssup_{t \in I}\norm{f(t)}_Z$.

% \pbc{$\LL^\infty$ used in Rem. II.2 and Ex. VI.5/VI.6}

%\mir{For $\LL^\infty$ or generally for functions which are defined 'a.e', this definition is a bit wrong, since it assumes that $f$ is defined everywhere on $[a,b]$.}
%\pbc{I think it is a standard definition the way it is.}

%%%%%%%%%%%%%%%%%%%%%%%%%%%%%%%%%%%%%%%%%%%%%%%%%%%%%%%%%%%%%

% \mir{Exclude from the conference version.}

% \subsection{Additional questions what can be considered}

% \begin{itemize}
%     \item The equivalence between oLIM and OGULIM for ODE systems. 

% It is reasonable to assume that the proof of LIM = ULIM characterization and the Lemmas in \cite[Section 2.1]{ISW01} should help on this way.

%     \item In \cite{ISW01} the authors have also proved the equivalence between oGAS and oUGAS (=IOS with zero gain) for ODEs with the inputs whose values are in a bounded set.

%     We could develop the characterizations for oUGAS for infinite-dimensional systems.  
%     The only thing that needs to be done is to check whether the characterizations in our proofs for IOS setting gains equal to zero will result in getting IOS with zero gain. 

%     One could compare this result with Theorem 2(v) in \cite{Mironchenko2019b}

%     \item In \cite[Theorem 2]{ISW01} the authors have proved that oGAS = oUGAS. 
% We can get a simpler alternative proof using the characterization oLIM = OGULIM. 

% \end{itemize}

\section{Preliminaries}\label{sec:preliminaries}

\begin{definition}\label{def:controlSystem}
    Consider a quadruple $\Sigma = (I, X, \U, \phi)$ consisting of
    \begin{enumerate}
        \item a \emph{time set} $I \in \{\N_0, \R_+\}$.
        \item a normed vector space $(X, \norm{\ph}_X)$, called the \emph{state space}.
        \item a vector space $U$ of input values and a normed vector space of inputs $(\U, \norm{\ph}_\U)$, where $\U$ is a linear subspace of $\{u\,|\,u: I \to U\}$. We assume that the following invariance axioms hold:
        \begin{itemize}
            \item \emph{axiom of shift invariance}: for all $u \in \U$ and all $\tau \in I$, the time-shifted function $u(\ph + \tau)$ belongs to $\U$ with $\norm{u}_\U \geq \norm{u(\ph + \tau)}_\U$.
            % \item \emph{The axiom of concatenation}: for all $u_1, u_2 \in \U$ and for all $t > 0$ the concatenation of $u_1$ and $u_2$ at time $t$, defined by
            % \begin{align*}
            %     \big(u_1 \underset t \diamond u_2\big)(\tau) \coloneq
            %     \begin{cases}
            %         u_1(\tau), &\text{for all } \tau \in [0,t], \\
            %         u_2(\tau - t), &\text{for all } \tau \in (t, \infty),
            %     \end{cases}
            % \end{align*}
            % belongs to $\U$.
            % \pbc{We do not need concatenations in this paper.}
            \item \emph{axiom of restriction invariance}: for each $u \in \U$ and for all $t_2 \geq t_1 \geq 0$ the restriction of $u$ to time interval $[t_1,t_2]$ given by $u|_{[t_1,t_2]}$ belongs to $\U$ and $\norm{u|_{[t_1,t_2]}}_{\U} \leq \norm{u}_{\U}$.
%            \pbc{If we use $>$ instead of $\geq$ then we either have to explain why or change the definition of the restriction, I think. Which do you prefer?}
        \end{itemize}
        \item a map $\phi\colon D_\phi \to X$, $D_\phi \subset I \times X \times \U$, called \emph{transition map}, so that for all $(x,u) \in  X \times \U$ it holds that $D_\phi \cap (I \times \{(x,u)\}) = [0,t_m) \times \{(x,u)\}$, for a certain $t_m = t_m(x,u) \in (0, + \infty]$. The corresponding interval $[0,t_m)$ is called the \emph{maximal domain of definition} of the mapping $t \mapsto \phi(t, x, u)$, which we call a \emph{trajectory} of the system.
    \end{enumerate}

    %\pbc{TODO Patrick: Add remark that $\norm{u|_{[t_1,t_2]}}_{\U} \leq \norm{u}_{\U}$ is non-trivial.}

%    \mir{What is Radon measure above?}
%    \pbc{I hope it gets clearer now.}

    The quadruple $\Sigma$ is called a \emph{(control) system} if it satisfies the following axioms.
    \begin{enumerate}%[label=($\Sigma$\arabic*)]
        \renewcommand\theenumi{($\Sigma$\arabic{enumi})}
        \item\label{cond:identityProperty} \emph{Identity property}: For all $(x,u) \in X \times \U$, it holds that $\phi(0, x, u) = x$.
        \item \emph{Causality}: For all $(t,x,u) \in D_\phi$ and all $\widetilde u \in \U$ such that $u(s)= \widetilde u(s)$ for all $s \in [0,t]$, it holds that $[0,t] \times \{(x, \widetilde u)\} \subset D_\phi$ and $\phi(t,x,u) = \phi(t,x,\widetilde u)$.
        \item\label{cond:cocycleProperty} \emph{Cocycle property}: For all $x \in X$, $u \in \U$ and $t,s \geq 0$ so that $[0,t + s] \times \{(x,u)\} \subset D_\phi$, we have ${\phi(t + s, x, u) = \phi(s, \phi(t, x, u), u(t + \ph))}$.
        % \item Continuity: for each $(x, u) \in X \times U$, the trajectory $t \mapsto \phi(t, x, u)$ is continuous on its maximal domain of definition.
    \end{enumerate}
\end{definition}

\begin{remark}
    The axiom of restriction invariance is non-trivial: First, the restriction of a function is, in general, not an element of the same function space, e.g., for the space of continuous functions. Second, $\norm{u|_{[t_1,t_2]}}_{\U} \leq \norm{u}_{\U}$ is not satisfied in general even if $u|_{[t_1,t_2]} \in \U$. To illustrate this, we first define the space of Radon measures $\mathcal M$ with norm 
    \begin{align*}
        \norm{f}_{\mathcal M} \coloneq \sup\!\braces{\int_0^\infty \hspace{-1em} f(t) \varphi(t) \, \diff t\,\middle|\, \varphi \in C^1_c(\R_+,\R),\, \norm{\varphi}_1 \leq 1}\!,
    \end{align*}
    where $C^1_c(\R_+,\R)$ denotes the space of continuously differentiable functions from $\R_+$ to $\R$ with compact support.
    
    Next, we consider $\U$ as the space of functions with bounded variation with norm $\norm{u}_{\U} \coloneq \norm{u}_\infty + \norm{\frac{\partial}{\partial t}u}_{\mathcal M}$.
    Then, for $u \in \U$, $u \equiv 1$, it follows that
    $\norm{u|_{[0,1]}}_{\U} = 2 > 1 = \norm{u}_{\U}$.
    \hspace*{\fill}~\QED 
\end{remark}
% \mir{Since Patrick has already checked everything, then I would propose: 
% \begin{itemize}
%     \item Delete the unnecessary axioms
%     \item Write a remark where we will say explicitly that although the axioms XXX and YYY were assumed in MiW18b, we do not need them here. 
% Were they needed in MiW18b?
% \end{itemize}
% }
% \pbc{I agree, continuity does not seem necessary in MiW18b either. At least it is not explicitly referred to.}
\begin{remark}
    In \cite{MiW18b}, the additional axiom of \emph{continuity} of the trajectories is introduced. This property is not a requirement for the present paper and in \cite{MiW18b}, it was only used in Proposition 10, but the proof can be adapted to include non-continuous trajectories by the variation we propose in Lemma \ref{lem:OOULIMandLocalOLandOBORStoGlobalOL}. The same holds for the \emph{axiom of concatenation} in Definition \ref{def:controlSystem}. Instead, we introduce the axiom of restriction invariance, which is necessary for our results in Section \ref{sec:CharacterizationOfISS}.
    \hspace*{\fill}~\QED 
\end{remark}
\begin{remark}
    Note that for many of the following results, the cocycle property \ref{cond:cocycleProperty} is not a necessary precondition. E.g., for Theorem \ref{thm:IOSequivalences} and Proposition \ref{prop:OLIOSequivalences}, for the implications  1)$\implies$2)$\implies$3) this condition is not required. Systems that are not satisfying the cocycle property but only the \emph{weak semi-group property} are studied in \cite{KaJ11,KaJ11b}.
    
    However, for a uniform system definition and better readability, we restrict the setting to include the cocycle property.
    \hspace*{\fill}~\QED  
\end{remark}
\begin{definition}
    A (time-invariant) \emph{control system with outputs} $\Sigma \coloneq (I, X, \U, \phi, Y,h)$ is given by an abstract control system $(I, X, \U, \phi)$ together with
    \begin{enumerate}
        \item a normed vector space $(Y, \norm{\ph}_Y)$ called the \emph{output-value space} or \emph{measurement-value space}; and
%        \item A map $h\colon D(h)\to Y$, with $D(h) \subset X \times U$ called the \emph{measurement (or: output) map}.
        \item a map $h\colon X \times U \to Y$, called the \emph{output} (or: \emph{measurement}) \emph{map}.
        % \item The Lebesgue space $\Y = \LL^{\infty}(I, Y)$ with norm $\norm{y}_{\Y}\coloneq \esssup_{t \in I}\norm{y}_Y$.
        %
        % \pbc{TODO Andrii: Check compatibility with IOS.}
        %
        % \pbc{This output-value space $\Y$ is in fact only needed in the ISS = IOS+IOSS Section \ref{subsec:ISSbyIOSandIOSS}. There, only compatibility is needed in the sense that $\norm{y(t)|_{[a,b]}}_\Y \leq \sup_{t \in I} \norm{y(t)}_Y$. Therefore, generalizing our setting to outputs in $\LL^\infty$ is rather related to generalizing all of our stability definitions. However, one could easily formulate e.g. (3) for almost all $t \in I$. So this is not at all a topic about outputs. But I have definitely not seen ISS for almost all $t$ before.}
    \end{enumerate}
    We also denote %$y \in \Y$, 
    $y(\ph, x, u) \coloneq h(\phi(\ph, x, u), u(\ph))$ for all $(x, u) \in X \times \U$.
\end{definition}
\pb{This broad class of control systems with outputs includes 
%\pbc{[Deleted: control systems with outputs generated by]}
ODEs, impulsive and switched systems and time delay systems. Moreover, semi-linear evolution PDEs generating a $C_0$-semigroup, certain subclasses of well-posed linear systems \cite[Sec. 5]{TuW14} and boundary control systems with sufficient regularity \cite[Sec. 4.1]{Mironchenko2020}, \cite{Sch20} are covered by this framework.}

The following definition is taken from \cite{MiW18b}.
\begin{definition}\label{def:forwardComplete}
    We call a control system $(I,X,\U,\phi)$ \emph{forward complete (FC)}, if for each $x \in X$, $u \in \U$ and $t \in I$ the value $\phi(t,x,u) \in X$ is well-defined.
\end{definition}

In the following, we always consider a forward complete control system with outputs $\Sigma = (I, X, \U, \phi, Y,h)$.

\begin{definition}\label{def:OCEP}
    We call $\Sigma$ \emph{output continuous at the equilibrium point (OCEP)} if for every $\tau \in I$ and every $\eps > 0$ there exists $\delta = \delta(\eps, \tau) > 0$ such that 
    \begin{align*} %\label{ineq:OCEP}
		t \in I\colon t \leq \tau, \, \norm{x}_X \leq \delta, \, \norm{u}_{\U} \leq \delta \implies \norm{y(t,x,u)}_Y \leq \eps.
	\end{align*}
\end{definition}

\begin{definition}\label{def:BORS}
    $\Sigma$ is said to have \emph{bounded output reachability sets (BORS)} if for all $C > 0$ and $\tau \in I$ it holds that
	\begin{align*} %\label{ineq:ISS}
		\sup_{\norm{x}_X < C,\, \norm{u}_\U < C, \ t < \tau}\norm{y(t,x,u)}_Y < \infty. %\IEEEQEDhere
	\end{align*}
    %\par\nopagebreak\vspace{-\baselineskip}\mbox{}\QED
\end{definition}

\begin{definition}\label{def:hBoundedOnBoundedSets}
    The output map $h$ is called \emph{bounded on bounded sets} if for all $C > 0$, there exists $D = D(C)$ such that for all $x \in B_C$, $u \in B_{C,\pb{\U}}$, the bound $\norm{h(x,u\pb{(t)})}_Y \leq D$ holds \pb{for all $t \in I$}.
\end{definition}

Boundedness on bounded sets can be equivalently characterized as follows \cite[Lem. 3]{MiW18b}.
\begin{lemma}
    The output map $h$ is bounded on bounded sets if and only if there exist $\sigma_1, \gamma_1 \in \KK$ and $c \geq 0$ such that for all $x\in X$\pb{, $u \in \U$ and $t \in I$} we have 
    \begin{align}
        \norm{h(x,u\pb{(t)})}_Y \leq \sigma_1(\norm{x}_X) + \gamma_1(\norm{u}_{\U}) + c. \label{ineq:hBoundedOnBoundedSets}
    \end{align} 
\end{lemma}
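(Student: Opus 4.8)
The plan is to prove the two implications separately, with essentially all the substance in the ``only if'' direction. For the ``if'' direction I would simply note that if the stated estimate holds with some $\sigma_1,\gamma_1\in\KK$ and $c\ge 0$, then for every $C>0$ the monotonicity of $\sigma_1$ and $\gamma_1$ gives $\norm{h(x,u)}_Y\le\sigma_1(C)+\gamma_1(C)+c$ whenever $x\in B_C$ and $u\in B_{C,U}$, so one may take $D(C):=\sigma_1(C)+\gamma_1(C)+c$; this is a one-line argument with no difficulty.

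For the ``only if'' direction the idea is to collapse the two-variable boundedness into a single scalar nondecreasing ``envelope'' and then upgrade that envelope to comparison functions. First I would set
\[
\omega(r)\coloneq\sup\braces{\norm{h(x,u)}_Y\,\middle|\,\norm{x}_X\le r,\ \norm{u}_\U\le r},\qquad r\ge 0,
\]
which is finite for every $r$ by the boundedness-on-bounded-sets hypothesis (apply it with radius $r+1$) and is nondecreasing in $r$ by construction. Then, given arbitrary $x\in X$ and $u\in\U$, I would put $r:=\max\{\norm{x}_X,\norm{u}_\U\}$, so that the pair $(x,u)$ is admissible in the supremum defining $\omega(r)$; using that $\omega$ is nonnegative and nondecreasing,
\[
\norm{h(x,u)}_Y\le\omega\!\left(\max\{\norm{x}_X,\norm{u}_\U\}\right)\le\omega(\norm{x}_X)+\omega(\norm{u}_\U).
\]
At this point everything is reduced to bounding a single nondecreasing function $\omega\colon\R^+_0\to\R^+_0$ above by $\psi(\ph)+c_0$ for some $\psi\in\KK_\infty\subset\KK$ and some $c_0\ge 0$; then $\sigma_1:=\gamma_1:=\psi$ and $c:=2c_0$ complete the argument.

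The only step that requires genuine (if standard) work is this last domination of a monotone — hence locally bounded but possibly discontinuous — function by a $\KK_\infty$ function plus a constant, and that is what I expect to be the main obstacle. My plan there is the usual construction: absorb the behaviour on $[0,1]$ into the constant $c_0:=\omega(1)$, smooth $\omega$ on $[1,\infty)$ by an averaged quantity such as $r\mapsto\int_0^r\omega(s+1)\,\diff s$ (finite and continuous because a monotone function is locally integrable, and dominating $\omega(r)$ for $r\ge 1$ since $\omega(s+1)\ge\omega(r)$ on $[r-1,r]$), and add the linear term $r$ to force strict monotonicity and unboundedness, which yields the required $\psi\in\KK_\infty$. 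This is classical and could, if preferred, simply be quoted from a comparison-function toolbox rather than proved inline. Finally, I would remark that the lemma and this two-sided estimate express the same content in different language, so once the envelope construction is in place no further verification is needed.
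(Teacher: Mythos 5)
Your proof is correct and follows essentially the same route as the paper's: both reduce matters to a monotone envelope evaluated along the diagonal $\max\{\norm{x}_X,\norm{u}_\U\}$ and then split via $\omega(\max\{a,b\})\le\omega(a)+\omega(b)$. The only difference is that the paper simply asserts the existence of a continuous, componentwise increasing majorant $\mu$ and sets $\sigma_1(r)=\gamma_1(r)=\mu(r,r)-\mu(0,0)$, $c=\mu(0,0)$, whereas you carry out the regularization of the sup-envelope explicitly (integral smoothing plus a linear term), which if anything is more careful about the strict-increase requirement for membership in $\KK$.
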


\begin{proof}
    Let $h$ be bounded on bounded sets. Then there exists $\mu \colon \R_+ \times \R_+ \to \R_+$, which is continuous and component-wise increasing, such that \pb{for all $x \in X$, $u \in \U$, $t \in I$:}
    \begin{align*}
        \norm{h(x,u\pb{(t)})}_Y \leq \mu(\norm x_X,\norm u_{\U}). 
    \end{align*}
    % W.l.o.g we can assume that $\mu$ is continuous and component-wise increasing.
    %Such $\mu$ can be constructed analogously to the approach in \cite[Lem.  2.12]{Mironchenko2018}.
    Then, by the choice $\sigma_1(r) = \gamma_1(r) \coloneq \mu(r,r) - \mu(0,0)$, and $c =  \mu(0,0)$, it follows that
    \begin{align*}
        \norm{h(x,u\pb{(t)})}_Y &\leq \sigma_1\!\paren{\max\!\braces{\norm x_X,\norm u_{\U}}} + c \\
        &\leq \sigma_1\!\paren{\norm x_X} + \gamma_1\!\paren{\norm u_{\U}}+ c,
    \end{align*}
    as desired. The converse statement is clear.
    %\cite[Lem. 2.12]{Mironchenko2018} ?
\end{proof}
\pb{
\begin{remark}
    For inputs defined in almost everywhere-sense, boundedness on bounded sets of $h$, \eqref{ineq:hBoundedOnBoundedSets} is in general not well-defined as $u$ cannot be evaluated at $t$. 
    
    One of the ways to remedy this is to impose a stronger assumption that for all $w \in U$ we have that
    \begin{align*}
        \norm{h(x,w)}_Y \leq \sigma_1(\norm{x}_X) + c.
    \end{align*}
    
    Also see \cite[Sec. 4.1]{Jacob2020} and especially Remark 4.6 for an analogous phenomenon.
    % To see this, consider $u \in \U = \LL^\infty(\R_+,\R)$, $\phi(\ph,x,u) = x \in \R$ and $h(x,u) = u$. For arbitrarily large $c \in \R_+$ such that $u(t) = c$ for $t=1$ and $u(t)=0$ for $t \in \R_+\setminus \{1\}$, $D(0)$ cannot be defined. 
    
    In case $h$ does not satisfy this stronger condition, it seems legitimate to weaken boundedness on bounded sets of $h$ in the sense that \eqref{ineq:hBoundedOnBoundedSets} %$\norm{h(x,u\pb{(t)})}_Y \leq D$ 
    holds only for almost every $t \in I$. In this case, all the following definitions need to be redefined canonically to hold for almost every $t \in I$. Formally, it is only necessary to adapt the time domain such that ``$\forall t \in I$'' is replaced by ``$\exists$ a null set $\mathcal N\colon \forall t \in I\setminus \mathcal N$''. Note that in this context, the weak attractivity notions Definitions \ref{def:OLIM}--\ref{def:OULIM} play a special role as for their adaptation %cannot be carried out by just considering "almost every $t \in I$" but the formal redefinition must be applied, i.e., 
    ``$\exists t \in I$''  must be replaced by ``$\forall$ null sets $\mathcal N\colon \exists t \in I\setminus \mathcal N$''. 
    
    Rigorously applied to all introduced stability notions in Table \ref{tab:Abbreviations}, these alternative definitions lead to the same results throughout the paper with only minor adaptations of the respective proofs. However, for simplicity of notation, we keep the pointwise notions.
    \hspace*{\fill}~\QED
\end{remark}
}

% \mir{We need a symbol for the end of the remark. It is not clear often where the remark ends.}

% \pbc{Let me see. We did this before. Only for Remarks or also for e.g. Definitions?}

% \pbc{Ok, I added it manually. So no clever solution ;)}

% \mir{OK, es ist eine gute Idee such für Def das haben.}

% \pbc{New:}
% \begin{remark}
%     For discrete-time systems with full-state output $h = h(x,u) \equiv x$, $\Sigma$ is BORS if and only if the transition map $\phi = \phi(t,x,u)$ is bounded for bounded $x \in X$ and $u \in \U$ for a single time step $t = 1$ \cite{Bachmann2022a}.
% \hspace*{\fill}~\QED
% \end{remark}

\begin{definition}\label{def:hKbounded}
    We call the output map $h$ \emph{$\KK$-bounded} if there exist $\sigma_1, \gamma_1 \in \KK$ such that for all $x\in X$ and all $u\in \U$ we have \pb{for all $t \in I$:}
    \begin{align}\label{ineq:boundedOutputMap}
        \norm{h(x,u\pb{(t)})}_Y \leq \sigma_1(\norm{x}_X) + \gamma_1(\norm{u}_{\U}).
    \end{align} 
    
\end{definition}

Let us define the main concept of this paper.
\begin{definition}\label{def:IOS}
    $\Sigma$ is called \emph{input-to-output stable (IOS)}, if there exist $\beta \in \KK\LL$ and $\gamma \in \KK_\infty$ such that $\forall x \in X$, $\forall u \in \U$ the following holds:
    \begin{align}\label{ineq:IOS}
        \norm{y(t,x,u)}_Y \leq \beta\!\paren{\norm{x}_X,t} + \gamma\!\paren{\norm{u}_\U}\!, \qquad t \in I.
    \end{align}   
    % In the particular case $Y = X$ and $y(t,x,u) = \phi(t,x,u)$ for all $(t,x,u) \in I \times X \times \U$, \eqref{ineq:IOS} becomes 
    % \begin{align}\label{ineq:ISS}
    %     \norm{\phi(t,x,u)}_X \leq \beta\!\paren{\norm{x}_X,t} + \gamma\!\paren{\norm{u}_\U}\!.
    % \end{align}
    % and $\Sigma$ is called \emph{input-to-state stable (ISS)}.
    
    % In the remainder of the paper, if we state that $\Sigma$ is ISS, then we mean that  the transition map satisfies \eqref{ineq:ISS}.
\end{definition}

% \pbc{In Son89, a notion named IOS is introduced. However, this is not the same as what we call IOS today as it only treats systems with state feedback and also $\beta$ on the right hand side only depends on this state feedback. I am not sure if we should reference it here.}

Based on the notion of input/output stability in \cite{Son89}, the concept of IOS was introduced \pb{for ODEs} in \cite{JTP94}. IOS generalizes the following definition of input-to-state stability \cite{Son89} to systems with outputs as described in Remark \ref{rem:ISSvsIOS}.
\begin{definition}\label{def:ISS}
    $\Sigma$ is called \emph{input-to-state stable (ISS)}, if there exist $\beta \in \KK\LL$ and $\gamma \in \KK_\infty$ such that $\forall x \in X$, $\forall u \in \U$ the following holds:
    \begin{align}\label{ineq:ISS}
        \norm{\phi(t,x,u)}_X \leq \beta\!\paren{\norm{x}_X,t} + \gamma\!\paren{\norm{u}_\U}\!, \qquad t \in I.
    \end{align}
\end{definition}
% \begin{remark}\label{rem:ISSvsIOS}
%     ISS can be interpreted as a special case of IOS if we choose $Y = X$ and $y(t,x,u) = \phi(t,x,u)$. \hspace*{\fill}~\QED
% \end{remark}
\begin{remark}\label{rem:ISSvsIOS}
    A special case of output systems is given for $Y = X$, $h(x,u) \equiv x$ and $y(t,x,u) = \phi(t,x,u)$ for all $t\in I$, $x \in X$ and $u\in \U$. We will refer to this kind of systems by \emph{systems with full-state output}.
    For such systems, IOS reduces to ISS.
    \hspace*{\fill}~\QED
\end{remark}

We also introduce the following property which is weaker than IOS and was introduced in \cite{JTP94}.
\begin{definition}\label{def:IOpS}
    $\Sigma$ is called \emph{input-to-output practically stable (IOpS)}, if there exist $\beta \in \KK\LL$, $\gamma \in \KK_\infty$ and $c \geq 0$ such that $\forall x \in X$, $\forall u \in \U$ the following holds:
    \begin{align*}%\label{ineq:IOpS}
        \norm{y(t,x,u)}_Y \leq \beta\!\paren{\norm{x}_X,t} + \gamma\!\paren{\norm{u}_\U} + c, \qquad t \in I.
    \end{align*}   
    \amc{Here, $c$ is called the \emph{residual constant}.}
\end{definition}

\subsection{Stability properties}

In this section, we introduce several stability properties needed for the characterization of IOS.

\begin{definition}[\!\!\cite{Sontag1999}]\label{def:OL}
	We call $\Sigma$ \emph{output Lagrange stable (OL)} if there exist $\sigma, \gamma \in \KK_\infty$ such that for all $x \in X$ and $u \in \U$, it holds that
	\begin{align}\label{ineq:defOL}
		\norm{y(t,x,u)}_Y \leq \sigma\!\paren{\norm{y(0,x,u)}_Y} + \gamma\!\paren{\norm{u}_{\U}}\!, \quad t \in I.
	\end{align}
    We call $\Sigma$ \emph{locally output Lagrange stable (locally OL)} if there exist $\sigma, \gamma \in \KK_\infty$ and $r > 0$ such that for all $x \in B_r$ and $u \in B_{r,\U}$, %it holds that
    \eqref{ineq:defOL} holds.
	% \begin{align*} %\label{ineq:OL}
	% 	\norm{y(t,x,u)}_Y \leq \sigma\!\paren{\norm{y(0,x,u)}_Y} + \gamma\!\paren{\norm{u}_{\U}}\!, \qquad t \in I.
	% \end{align*}
\end{definition}

The following notions generalize the classical concepts of uniform local/global stability (cf. \cite{MiW18b}) to systems with outputs. 

\begin{definition}\label{def:OULS}
	We call system $\Sigma$ 
    \begin{enumerate}
        \item \emph{output-uniformly locally stable (OULS)} if there exist ${r > 0}$ and $\sigma, \gamma \in \KK_\infty$ such that for all $x \in B_r$ and $u \in B_{r,\U}$, it holds that
    	\begin{align}\label{ineq:OULS}
    		\norm{y(t,x,u)}_Y \leq \sigma\!\paren{\norm{x}_X} + \gamma\!\paren{\norm{u}_{\U}}\!, \qquad t \in I;
    	\end{align}
        \item \emph{output-uniformly globally stable (OUGS)} if there exist $\sigma, \gamma \in \KK_\infty$ such that for all $x \in X$, $u \in \U$, %it holds that
    	% \begin{align*} %\label{ineq:OULS}
    	% 	\norm{y(t,x,u)}_Y \leq \sigma\!\paren{\norm{x}_X} + \gamma\!\paren{\norm{u}_{\U}}\!, \qquad t \in I;
    	% \end{align*}
        \eqref{ineq:OULS} holds.
        \item \emph{output-uniformly globally bounded (OUGB)} if there exist $\sigma, \gamma \in \KK_\infty$, $c > 0$ such that for all $x \in X$ and all $u \in \U$, it holds that
    	\begin{align*} %\label{ineq:OULS}
    		\norm{y(t,x,u)}_Y \leq \sigma\!\paren{\norm{x}_X} + \gamma\!\paren{\norm{u}_{\U}} + c, \qquad t \in I.
    	\end{align*}
    \end{enumerate}
\end{definition}

% The\todo{rephrased} property OULS means that the initial condition and input can be chosen small enough in norm such that the output remains in a ball of predefined size for all times $t \in I$. For OUGS additionally holds that for every initial condition the output remains in a sufficiently large neighborhood of the ball with radius $\gamma\!\paren{\norm{u}_\infty}$. OUGB means that all trajectories need to be uniformly bounded in terms of the norms of initial state and input.

An equivalent characterization of local OL and OULS in $\eps$-$\delta$-notation is given by the following
\begin{lemma}\label{lem:redefineOLOULS}
    Consider a control system with outputs $\Sigma = (I, X, \U, \phi, Y,h)$.
    \begin{enumerate}
        \item $\Sigma$ is locally OL if and only if for all $\eps > 0$, there exists $\delta > 0$ such that
        \begin{align*} %\label{ineq:OCEP}
    		\norm{y(0,x,u)}_Y &\leq \delta, \quad \norm{u}_{\U} \leq \delta\\ 
            &\qquad\implies \norm{y(t,x,u)}_Y \leq \eps,\quad t \in I.
    	\end{align*}
        \item System $\Sigma$ is OULS if and only if for all $\eps > 0$, there exists $\delta > 0$ such that
        \begin{align*} %\label{ineq:OCEP}
    		\norm{x}_X \leq \delta, \;\; \norm{u}_{\U} \leq \delta \implies \norm{y(t,x,u)}_Y \leq \eps,\;\;  t \in I.
    	\end{align*}
    \end{enumerate} 
\end{lemma}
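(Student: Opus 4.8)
Both equivalences are special cases of the standard dictionary between class-$\KK_\infty$ bounds and $\eps$-$\delta$ formulations, so the plan is to prove part 2) in detail and then obtain part 1) by running the same argument with $\norm{y(0,x,u)}_Y = \norm{h(x,u(0))}_Y$ in place of $\norm{x}_X$ throughout (and with \eqref{ineq:defOL} as the target estimate instead of \eqref{ineq:OULS}).

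For the necessity direction of 2) (OULS $\Rightarrow$ the $\eps$-$\delta$ property) I would argue directly: given OULS with $\sigma,\gamma \in \KK_\infty$ and radius $r>0$ as in \eqref{ineq:OULS}, and given $\eps>0$, pick $\delta \in (0,r)$ with $\sigma(\delta)+\gamma(\delta) \le \eps$ — possible since $\sigma,\gamma$ are continuous and vanish at $0$. Then $\norm{x}_X \le \delta$ and $\norm{u}_{\U}\le\delta$ force $x \in B_r$, $u \in B_{r,\U}$, so \eqref{ineq:OULS} applies and monotonicity of $\sigma,\gamma$ gives $\norm{y(t,x,u)}_Y \le \sigma(\delta)+\gamma(\delta)\le\eps$ for all $t\in I$.

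For the sufficiency direction ($\eps$-$\delta$ property $\Rightarrow$ OULS) I would first apply the hypothesis with $\eps=1$ to obtain $r>0$ such that $\norm{x}_X\le r$ and $\norm{u}_{\U}\le r$ imply $\norm{y(t,x,u)}_Y\le 1$ for all $t\in I$, and then define, for $s\in[0,r]$,
\[
  \omega(s) := \sup\bigl\{ \norm{y(t,x,u)}_Y : \norm{x}_X \le s,\ \norm{u}_{\U} \le s,\ t \in I \bigr\} .
\]
This $\omega$ is finite (at most $1$) and nondecreasing on $[0,r]$, and applying the $\eps$-$\delta$ hypothesis to arbitrary $\eps>0$ shows $\omega(s)\to 0$ as $s\to 0^+$. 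I would then invoke a standard comparison-function construction (cf.\ \cite{Mir23}) to produce $\sigma_0\in\KK_\infty$ with $\omega(s)\le\sigma_0(s)$ for all $s\in[0,r]$. Then, for any $x\in B_r$, $u\in B_{r,\U}$ and $t\in I$, writing $s:=\max\{\norm{x}_X,\norm{u}_{\U}\}<r$,
\[
  \norm{y(t,x,u)}_Y \le \omega(s) \le \sigma_0(s) \le \sigma_0(\norm{x}_X) + \sigma_0(\norm{u}_{\U})
\]
for all $t\in I$, which is OULS with $\sigma=\gamma=\sigma_0$.

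The selection of $\delta$, the monotonicity steps and the $\max$-bookkeeping are routine; the one genuinely substantive ingredient is the passage from the $\eps$-$\delta$ data to the $\KK_\infty$-majorant $\sigma_0$ of $\omega$ — i.e.\ the fact that a nondecreasing function which is finite near the origin and vanishes there can be dominated on a neighbourhood of $0$ by a class-$\KK_\infty$ function — which I would cite from the literature rather than reprove. For part 1) the scheme is unchanged, with the supremum defining $\omega$ now taken over all $(x,u)$ satisfying $\norm{y(0,x,u)}_Y \le s$ and $\norm{u}_{\U} \le s$ (again finite for small $s$ by the hypothesis, since $y(0,x,u)=h(x,u(0))$ is exactly the quantity constrained there), which yields \eqref{ineq:defOL}.
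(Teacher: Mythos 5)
Your part 2) is correct, and it is essentially the paper's own route: the paper proves the lemma by pointing to the full-state analogue \cite[Lem.~2]{MiW18b}, and your $\eps$-$\delta$-to-$\KK_\infty$ construction (pick $\delta$ with $\sigma(\delta)+\gamma(\delta)\le\eps$ for necessity; the monotone envelope $\omega$ and its class-$\KK_\infty$ majorant for sufficiency) is exactly that standard argument written out.

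The gap is in your one-line claim that part 1) follows by ``running the same argument with $\norm{y(0,x,u)}_Y$ in place of $\norm{x}_X$ throughout''. The quantifiers do not transfer. In part 2) the necessity step works because $\norm{x}_X\le\delta<r$ literally places $(x,u)$ in the set $B_r\times B_{r,\U}$ on which \eqref{ineq:OULS} is assumed; in part 1) the hypothesis $\norm{y(0,x,u)}_Y\le\delta$ does \emph{not} place $x$ in the state ball $B_r$ appearing in Definition~\ref{def:OL}, so \eqref{ineq:defOL} cannot be invoked for the states over which the $\eps$-$\delta$ condition quantifies. This is not bookkeeping: for $\dot x_1=\max\{|x_2|-1,\,0\}$, $\dot x_2=0$ with output $y=x_1$ (no input), the system is locally OL with $r=1$ and $\sigma=\mathrm{id}$, yet $x=(0,2)$ has zero initial output and output trajectory $y(t)=t$, so no $\delta$ works for $\eps=1$; hence that direction cannot be obtained from Definition~\ref{def:OL} alone by your part-2 scheme. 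Symmetrically, in the sufficiency direction your $\omega$-bound only covers pairs with $\norm{y(0,x,u)}_Y\le r$ and $\norm{u}_\U\le r$, whereas Definition~\ref{def:OL} demands an estimate for all $x\in B_{r'}$, $u\in B_{r',\U}$, and a small state need not have initial output $\le r$, since nothing in the standing assumptions bounds $h$ near zero; what you actually derive is an ``initial-output-ball'' variant of OL, not \eqref{ineq:defOL} as defined. The reason the transcription is painless in \cite[Lem.~2]{MiW18b} is precisely that there $y(0,x,u)=x$, so the two balls coincide. For general outputs you must either restate the $\eps$-$\delta$ condition (or the definition of locally OL) so that both sides are measured in the same quantity, or add a hypothesis linking $\norm{y(0,x,u)}_Y$ to $\norm{x}_X$ (e.g.\ $\KK$-boundedness of $h$); your proposal supplies neither, and that is where the proof of part 1) breaks.
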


\begin{proof}
    The proof is analogous to the proof of \cite[Lem.~2]{MiW18b}.
\end{proof}

%In the case that the output equals the state, 
The notions of OULS and local OL (OUGS and OL) coincide for systems with full-state output. For systems with full-state output, OULS and local OL become uniform local stability (ULS), OUGS and OL are the same as uniform global stability (UGS).  OUGB is uniform global boundedness (UGB) as defined in \cite{MiW18b}. Similarly, many of the other notions are derived from a concept for systems with full-state output which has the same name except the word \emph{output} in the beginning.

% For systems with full-state output, OULS  becomes uniform local stability (ULS), OUGS is the same as uniform global stability (UGS) and OUGB is uniform global boundedness (UGB) as defined in \cite{MiW18b} -- and similarly for other notions -- just omit all 'O'.

\subsection{Attractivity properties}
Following \cite{ISW01}, we define several attractivity-like properties for systems with inputs and outputs, and use them to characterize IOS.
\begin{definition}\label{def:OGUAG}
    $\Sigma$ has the
    \begin{enumerate}
        \item \emph{output-global uniform asymptotic gain property (OGUAG)} if there exists
    	$\gamma \in \KK_\infty$ such that for every $\eps > 0$, and every $r>0$,  there exists $\tau = \tau(\eps, r) \in I$ such that
    	\begin{align*} %\label{ineq:OGUAG}
        	\norm{y(t,x,u)}_Y &\leq \eps + \gamma\!\paren{\norm{u}_{\U}}\!,\quad x \in B_r, \, u\in \U, \, t \geq \tau;
    	\end{align*}
     %    \item \emph{output-uniform asymptotic gain property (OUAG)} if there exists
    	% $\gamma \in \KK_\infty$ such that for every $\eps > 0$, and every $r>0$  there exists $\tau = \tau(\eps, r) \in I$ such that
    	% \begin{align*} %\label{ineq:OGUAG}
     %    	\norm{y(t,x,u)}_Y &\leq \eps + \gamma\!\paren{\norm{u}_{\U}}, \\
     %        &x \in B_r, \, u\in B_{r,\U}, \, t \in I\colon t \geq \tau.
    	% \end{align*}
        \item \emph{output-uniform asymptotic gain property (OUAG)} if there exists
    	$\gamma \in \KK_\infty$ such that for every $\eps,r,s > 0$, there exists $\tau = \tau(\eps, r,s) \in I$ such that
    	\begin{align*} %\label{ineq:OGUAG}
        	\norm{y(t,x,u)}_Y &\leq \eps + \gamma\!\paren{\norm{u}_{\U}}\!, \\
            &x \in B_r, \, u\in B_{s,\U}, \, t \geq \tau;
    	\end{align*}
        
        \item \emph{output-asymptotic gain property (OAG)} if there exists
    	$\gamma \in \KK_\infty$ such that for every $\eps > 0$, $x \in X$ and $u\in \U$, there exists $\tau = \tau(\eps, x, u) \in I$ such that
    	\begin{align*} %\label{ineq:OGUAG}
        	\norm{y(t,x,u)}_Y &\leq \eps + \gamma\!\paren{\norm{u}_{\U}}\!, \quad t \geq \tau.
    	\end{align*}
    \end{enumerate}
\end{definition}
A system is OGUAG, OUAG and OAG, respectively, if all outputs converge to the ball with radius $\gamma(\norm{u}_\infty)$. The difference between them is that for OUAG, the convergence rate depends on the norm of the input and the norm of the state of the system, and for OGUAG it depends on the norm of the state, but not on the applied input. For OAG, the convergence rate is individual to each state and input.

% \pbc{About the following paragraph: Precisely, SoW96 states AG $\land$ LS$\iff$GUAG. So with Lem. \ref{lem:IOStoBORSandOCEPandOGUAGandOGULIM}, we also obtain GUAG$\implies$bUAG$\implies$AG. bUAG$\implies$LS is proven by Prop. I.1 in SoW96 (though the Prop. says GUAG$\implies$AG, the proof shows the stronger result).}
As stated in \cite[Thm. 1]{SoW96}, OAG and OGUAG are not equivalent for finite-dimensional systems even in the case of full-state output. Following the lines of proof of \cite[Prop. I.1]{SoW96}, even the stronger \amc{negative} result OAG$\nimplies$OUAG is true. Therefore, an important question is the relation between OUAG and OGUAG. In Proposition \ref{prop:OCAGequivalences}, we will show that for systems satisfying BORS, the properties OGUAG and OUAG are equivalent notions. Opposed to that, we demonstrate in Example \ref{ex:OUAGnottoOGUAG} that the notions are in general not equivalent if BORS is not satisfied.

We proceed with an equivalent characterization of OUAG.
\begin{lemma}\label{lem:alternativeOUAG}
    % Let $\Sigma = (I, X, \U, \phi, Y,h)$ be a forward complete control system with outputs. Let $h$ be bounded on bounded sets.
    % Then, 
    $\Sigma$ is OUAG if and only if there exists
	$\gamma \in \KK_\infty$ so that for all $\eps, r, s > 0$, there is $\tau = \tau(\eps,r,s) \in I$ with
	\begin{align}\label{ineq:alternativeOUAG}
		\norm{y(t,x,u)}_Y &\leq \eps + \gamma\!\paren{s}\!, \quad  
        x \in B_r, \, u\in B_{s,\U}, \, t \geq \tau.
	\end{align}
\end{lemma}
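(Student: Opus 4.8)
The plan is to prove the two implications separately, with essentially all of the work in the converse.

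For the ``only if'' direction I would simply invoke monotonicity. If $\Sigma$ is OUAG with gain $\gamma \in \KK_\infty$, then for given $\eps, r, s > 0$ take the $\tau = \tau(\eps,r,s)$ furnished by OUAG; for every $x \in B_r$, $u \in B_{s,\U}$ and $t \geq \tau$ we have $\norm{u}_\U < s$, hence $\norm{y(t,x,u)}_Y \leq \eps + \gamma(\norm{u}_\U) \leq \eps + \gamma(s)$, so the same $\gamma$ and $\tau$ witness \eqref{ineq:alternativeOUAG}.

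For the converse, assume \eqref{ineq:alternativeOUAG} holds with some $\gamma_0 \in \KK_\infty$ and some assignment $(\eps,r,s) \mapsto \tau(\eps,r,s) \in I$; the goal is to recover a genuinely input-dependent gain. I would set $\gamma(r) \coloneq \gamma_0(2r) \in \KK_\infty$ and, given $\eps, r, s > 0$, decompose the admissible input norms into dyadic shells $\norm{u}_\U \in [s/2^{k+1}, s/2^k)$. On the $k$-th shell I would apply \eqref{ineq:alternativeOUAG} with parameters $(\eps, r, s/2^k)$: since $u \in B_{s/2^k,\U}$, this gives $\norm{y(t,x,u)}_Y \leq \eps + \gamma_0(s/2^k)$ for $t \geq \tau(\eps, r, s/2^k)$, and because $s/2^k \leq 2\norm{u}_\U$ on that shell, the right-hand side is at most $\eps + \gamma_0(2\norm{u}_\U) = \eps + \gamma(\norm{u}_\U)$, exactly what OUAG requires.

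The only real obstacle is that the dyadic shells accumulate at $0$, so $\sup_k \tau(\eps,r,s/2^k)$ may be infinite and no single $\tau$ obtained this way is valid on all of $B_{s,\U}$. I would circumvent this by truncating: choose $K = K(\eps,s) \in \N$ with $\gamma_0(s/2^K) \leq \eps/2$ (possible since $\gamma_0(0) = 0$ and $\gamma_0$ is continuous), handle the finitely many shells $k = 0, \dots, K-1$ as above, and for the innermost ball $\norm{u}_\U < s/2^K$ apply \eqref{ineq:alternativeOUAG} with parameters $(\eps/2, r, s/2^K)$, obtaining $\norm{y(t,x,u)}_Y \leq \eps/2 + \gamma_0(s/2^K) \leq \eps \leq \eps + \gamma(\norm{u}_\U)$. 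Then I would let $\tau$ be the maximum of the finitely many numbers $\tau(\eps,r,s/2^0), \dots, \tau(\eps,r,s/2^{K-1})$ and $\tau(\eps/2, r, s/2^K)$, which is again an element of $I$; together with the routine observation that $[s/2^K, s) = \bigcup_{k=0}^{K-1} [s/2^{k+1}, s/2^k)$ with each point in exactly one shell, this $\tau$ works uniformly over $x \in B_r$, $u \in B_{s,\U}$, $t \geq \tau$, while the gain $\gamma = \gamma_0(2\,\cdot\,)$ is independent of $\eps, r, s$, as the definition of OUAG demands.
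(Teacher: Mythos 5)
Your proof is correct and follows essentially the same route as the paper: the paper also partitions the input norms into geometric shells (with ratio $e$ instead of your ratio $2$), obtains the inflated gain $\gamma(e\,\cdot\,)$ in place of your $\gamma_0(2\,\cdot\,)$, truncates at a finite index $k^*$ where $\gamma(e^{-k^*+1}s)\leq \eps/2$, and takes the maximum of the finitely many resulting times. The only differences are cosmetic (choice of shell ratio and using $\eps/2$ on all shells rather than only on the innermost ball).
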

The difference to the definition of OUAG is that \eqref{ineq:alternativeOUAG} is an upper bound in terms of $s$ instead of $u\in B_{s,\U}$.

\begin{proof}
    It is clear that OUAG implies \eqref{ineq:alternativeOUAG} as $\norm u_{\U} < s$ and $\gamma$ is strictly increasing.

    We show that the converse holds. Let $\gamma$ be as in \eqref{ineq:alternativeOUAG}. We fix $\eps, r,s > 0$. Then, for every $k \in \N$, there exists $\tau_k \coloneq \tau\!\paren{\frac{\eps}{2},r,e^{-k + 1}s}$ such that for all $x \in B_r$, all $u$ for which $\norm u_{\U} \in [e^{-k}s,e^{-k + 1}s)$ and all $t \in I\colon  t \geq \tau_k$, it holds that 
    \begin{align}\label{ineq:alternativeOUAGforbiginputs}
		\norm{y(t,x,u)}_Y 
        \leq \tfrac{\eps}{2} + \gamma\!\paren{e^{-k+1}s}
        \leq \tfrac{\eps}{2} + \gamma\!\paren{e \cdot \norm u_{\U}}\!.
	\end{align}
    
    Let $k^* \in\N$ be such that $\gamma\!\paren{e^{-k^*+1}s} \leq \frac{\eps}{2}$.
    
    % Moreover, let 
    % \begin{align*}
    %     k^* \coloneq \min\!\braces{k \in \N\,\middle|\, k \geq 1 +\ln(s) - \ln\!\paren{\gamma^{-1}\!\paren{\frac{\eps}{2}}}}.
    % \end{align*}
    Then, for all $x \in B_r$, $u \in B_{e^{-k^* + 1}s,\U}$ and $t \geq \tau_{k^*}$, it holds that
    \begin{align}\label{ineq:alternativeOUAGforsmallinputs}
		\norm{y(t,x,u)}_Y 
        \leq \tfrac{\eps}{2} + \gamma\!\paren{e^{-k^*+1}s}
        \leq \tfrac{\eps}{2} + \tfrac{\eps}{2} = \eps.
	\end{align}
    The maximum $\overline \tau = \max_{k \in \braces{1, \dots, k^*}}\braces{\tau_k}$ of finitely many elements exists. Hence, from \eqref{ineq:alternativeOUAGforbiginputs} and \eqref{ineq:alternativeOUAGforsmallinputs}, it follows that for all $x \in B_r$, $u \in B_{s,\U}$ and $t \in I$, $t \geq \overline\tau$, it holds that
    \begin{align*}
		\norm{y(t,x,u)}_Y 
        \leq \eps + \gamma\!\paren{e \cdot \norm u_{\U}}\!,
	\end{align*}
    i.e., $\Sigma$ is OUAG.
\end{proof}

Additionally, we generalize the concept of \emph{complete UAG} introduced in \cite{Mir19a} to systems with outputs.
\begin{definition}
    \label{def:OCAG}
    $\Sigma$ has the \emph{output-complete asymptotic gain property (OCAG)} if there exist $\beta \in \KK\LL$, $\gamma \in \KK_\infty$ and $c \ge 0$ such that $\forall x \in X$, $\forall u \in \U$, the following holds:
    \begin{align*} %\label{ineq:OGUAG}
        \norm{y(t,x,u)}_Y \leq \beta\!\paren{\norm{x}_X + c,t} + \gamma\!\paren{\norm{u}_\U}\!, \qquad t \in I.
    \end{align*}
\end{definition}

Clearly, OCAG implies OGUAG $\wedge$ BORS. 
We will show in Proposition \ref{prop:OCAGequivalences} that the converse holds as well.

\subsection{Weak attractivity properties}

Weak attractivity for dynamical systems was introduced in \cite{Bha66}. The limit property (LIM) extends it to control systems with full-state output \cite{SoW96} and is essential for ISS superposition theorems. To characterize ISS for infinite-dimensional systems, several variations of the LIM property have been introduced in \cite{MiW18b}. We extend these notions to systems with outputs.
\begin{definition}\label{def:OLIM}
    $\Sigma$ is said to possess the \emph{output-limit property (OLIM)} if there exists 
	$\gamma \in \KK_\infty$ such that for all $\eps > 0$, all $x \in X$ and all $u \in \U$, there is $t = t(\eps,x,u) \in I$ such that
	\begin{align*} %\label{ineq:ISS}
		\norm{y(t,x,u)}_Y \leq \eps + \gamma\!\paren{\norm{u}_{\U}}\!.
	\end{align*}
\end{definition}
In other words, system $\Sigma$ is OLIM, if for any input $u$ and any initial state, its output approaches the ball of radius $\gamma\!\paren{\norm{u}_{\U}}$
arbitrarily close.

% \mir{Do we need the next definition at this place?\todo[inline]{(*) I would say yes because it is used in Lem. \ref{lem:IOStoBORSandOCEPandOGUAGandOGULIM}. The easy implication is covered there. Otherwise we should also shift OGUAG for the same reasons.}}

As shown in \cite[Ex. 1]{MiW18b} for the special case of ISS, OLIM and OL are in general not sufficient to imply IOS for infinite-dimensional systems. 
Therefore, we introduce the following new notions, which are stronger as compared to OLIM.
\begin{definition}\label{def:OGULIM}
    We say $\Sigma$ possesses the \emph{output-global uniform limit property (OGULIM)} if there exists
	$\gamma \in \KK_\infty$ such that for all $\eps, r > 0$, there exists $\tau = \tau(\eps,r) \in I$ such that for all $x \in B_r$ and all $u \in \U$, there exists $t \in I$, $t \leq \tau$ such that
	\begin{align*} %\label{ineq:ISS}
		\norm{y(t,x,u)}_Y \leq \eps + \gamma\!\paren{\norm{u}_{\U}}\!.
	\end{align*}
\end{definition}

\begin{definition}\label{def:OULIM}
 %    We say $\Sigma$ possesses the \emph{output-uniform limit property (OULIM)} if there exists
	% $\gamma \in \KK_\infty$ such that for all $\eps, r > 0$ there exists $\tau = \tau(\eps,r) \in I$ such that for all $x \in B_r$ and all $u \in B_{r,\U}$ there exists $t \in I$, $t \leq \tau$ such that
	% \begin{align*} %\label{ineq:ISS}
	% 	\norm{y(t,x,u)}_Y \leq \eps + \gamma\!\paren{\norm{u}_{\U}}.
	% \end{align*}
    We say $\Sigma$ possesses the \emph{output-uniform limit property (OULIM)} if there exists
	$\gamma \in \KK_\infty$ such that for all $\eps, r, s > 0$, there exists $\tau = \tau(\eps,r,s) \in I$ such that for all $x \in B_r$ and all $u \in B_{s,\U}$, there exists $t \in I$, $t \leq \tau$ such that
	\begin{align*} %\label{ineq:ISS}
		\norm{y(t,x,u)}_Y \leq \eps + \gamma\!\paren{\norm{u}_{\U}}\!.
	\end{align*}
\end{definition}

In the case of OLIM, the approaching speed towards the ball of radius $\gamma\!\paren{\norm{u}_{\U}}$ depends on the input and the initial state. For OULIM, this speed only depends on the norm of the input and the initial state. And in the case of OGULIM, the speed of approach is also uniform in the input and does only depend on the norm of the initial state.

% \subsection{Sufficient condition for equivalence of OULIM and OGULIM}

% We start with a condition guaranteeing the equivalence of OULIM and OGULIM.
In the following, we provide an equivalent characterization of OULIM and show that OULIM and OGULIM are equivalent if $h$ is bounded on bounded sets.
% \begin{lemma}\label{lem:OULIMtoOGULIM}
%     Let $\Sigma = (I, X, \U, \phi, Y,h)$ be a forward complete control system with outputs. Let for any $r \geq 0$
%     \begin{align}
%         H(r) = \sup_{x \in B_r,\ u \in U} \braces{\norm{h(x,u)}_Y} < \infty \label{eq:uniformOutputBound}
%     \end{align}
%     be satisfied. 
    
%     Then, OULIM$\iff$OGULIM.
% \end{lemma}

% \pbc{Somehow this result is strange. Maybe it can be generalized to $h$ $\KK$-bounded.}

% \begin{proof}
%     OGULIM$\implies$OULIM follows directly from the definition of these concepts. 
%     %Lemma~\ref{lem:IOStoBORSandOCEPandOGUAGandOGULIM}.
    
%     OULIM$\implies$OGULIM: 
%     %We adapt the proof of \cite[Prop. 13]{MiW18b} to systems with outputs. 
%     Let $\Sigma$ be OULIM with $\gamma, \tau$ as in Definition \ref{def:OULIM}. Fix $\eps,r > 0$, take any $x \in B_r$, any $u \in \U$, and let $R \coloneq \gamma^{-1}(H(r))$. 
    
%     If $\norm{u}_\U \geq R$, then
%     \begin{align*}
%         \norm{y(0,x,u)}_Y &= \norm{h(x,u)}_Y \\
%         &\leq H(r)
%         \leq \gamma\!\paren{R}
%         \leq \eps + \gamma\!\paren{\norm{u}_\U}\!.
%     \end{align*}
    
%     Conversely, if $\norm{u}_\U \leq R$, then by OULIM there exists $t \in I$, $t \leq \tau$ such that
%     \begin{align*}
%         \norm{y(t,x,u)}_Y
%         \leq \eps + \gamma\!\paren{\norm{u}_\U}\!.
%     \end{align*}
%     Then, $\widetilde \tau\!\paren{\eps , r} = \max\{ \tau\!\paren{\eps , r, R}, 0 \} = \tau\!\paren{\eps , r, R}$ is an upper time bound for the OGULIM behavior that does not depend on $\norm{u}_\U$.
%     Hence, $\Sigma$ is OGULIM.
% \end{proof}

\begin{lemma}\label{lem:OULIMtoOGULIM}
    Let $\Sigma = (I, X, \U, \phi, Y,h)$ be a forward complete control system with outputs, then, $\Sigma$ is OULIM if and only if there exists
	$\gamma \in \KK_\infty$ such that for all $\eps, r, s > 0$, there exists $\tau = \tau(\eps,r,s) \in I$ such that for all $x \in B_r$ and all $u \in B_{s,\U}$, there exists $t \in I$, $t \leq \tau$ such that
	\begin{align}\label{ineq:alternativeOULIM}
		\norm{y(t,x,u)}_Y \leq \eps + \gamma\!\paren{s}\!.
	\end{align}
    
    Furthermore, if $h$ is bounded on bounded sets, then OULIM$\iff$OGULIM.
\end{lemma}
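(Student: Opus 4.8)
The plan is to treat the two assertions separately. For the equivalence with \eqref{ineq:alternativeOULIM}, one direction is free: if $\Sigma$ is OULIM with gain $\gamma$ and $u\in B_{s,\U}$, then $\norm u_\U<s$ and monotonicity of $\gamma$ turn the OULIM estimate into $\norm{y(t,x,u)}_Y\le\eps+\gamma(\norm u_\U)\le\eps+\gamma(s)$ at the time $t$ supplied by OULIM. For the converse I would reuse the geometric–scaling argument from the proof of Lemma~\ref{lem:alternativeOUAG}: given $\gamma$ as in \eqref{ineq:alternativeOULIM} and $\eps,r,s>0$, set $\tau_k\coloneq\tau(\tfrac\eps2,r,e^{-k+1}s)$ for $k\in\N$; for an input with $\norm u_\U\in[e^{-k}s,e^{-k+1}s)$ one has $u\in B_{e^{-k+1}s,\U}$, so \eqref{ineq:alternativeOULIM} yields some $t\le\tau_k$ with $\norm{y(t,x,u)}_Y\le\tfrac\eps2+\gamma(e^{-k+1}s)\le\tfrac\eps2+\gamma(e\norm u_\U)$; choosing $k^\ast\in\N$ with $\gamma(e^{-k^\ast+1}s)\le\tfrac\eps2$ absorbs all remaining (small) inputs through $\tau_{k^\ast}$, and $\overline\tau\coloneq\max_{k\in\{1,\dots,k^\ast\}}\tau_k$ then witnesses OULIM with gain $v\mapsto\gamma(ev)\in\KK_\infty$.

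For the second assertion, the implication OGULIM $\Rightarrow$ OULIM is immediate and needs no hypothesis on $h$: the time $\tau(\eps,r)$ from OGULIM also works in OULIM once the parameter $s$ is ignored, with the same gain. The substance is OULIM $\Rightarrow$ OGULIM, which I would argue as follows. Let $\gamma$ be an OULIM gain. Since $h$ is bounded on bounded sets, \cite[Lem.~3]{MiW18b} furnishes $\sigma_1,\gamma_1\in\KK$ and $c\ge0$ with $\norm{y(0,x,u)}_Y=\norm{h(x,u(0))}_Y\le\sigma_1(\norm x_X)+\gamma_1(\norm u_\U)+c$ for all $x\in X$, $u\in\U$. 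Put $\gamma'(v)\coloneq\gamma(v)+\gamma_1(v)+v\in\KK_\infty$, fix $\eps,r>0$, and set $S\coloneq\sigma_1(r)+c>0$. For an input with $\norm u_\U\ge S$, evaluating the output at $t=0$ gives $\norm{y(0,x,u)}_Y\le S+\gamma_1(\norm u_\U)\le\norm u_\U+\gamma_1(\norm u_\U)\le\gamma'(\norm u_\U)\le\eps+\gamma'(\norm u_\U)$. For an input with $\norm u_\U<S$ one has $u\in B_{S,\U}$, so OULIM with parameters $(\eps,r,S)$ produces $\tau_1\coloneq\tau(\eps,r,S)$ and some $t\le\tau_1$ with $\norm{y(t,x,u)}_Y\le\eps+\gamma(\norm u_\U)\le\eps+\gamma'(\norm u_\U)$. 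Setting $\tau(\eps,r)\coloneq\tau_1\in I$ covers both cases, so $\Sigma$ is OGULIM with gain $\gamma'$.

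I expect the main obstacle to be the large–input regime in OULIM $\Rightarrow$ OGULIM: the OULIM time depends on the input bound $s$, hence is unusable when $\norm u_\U$ is unrestricted, and one is forced instead to read the output off at the fixed time $t=0$ — which is precisely where boundedness of $h$ on bounded sets becomes indispensable and where the equivalence fails without it. A secondary point requiring care is that OGULIM demands the gain $\gamma'$ to be independent of $\eps$ and $r$, so the $r$-dependence, entering only through the threshold $S=\sigma_1(r)+c$, must be quarantined inside the choice of $\tau(\eps,r)$ and kept out of the comparison function; the splitting above is arranged exactly so that $S$ occurs only in the OULIM call and in the resulting time, never in $\gamma'$.
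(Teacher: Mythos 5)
Your proof is correct and follows essentially the same route as the paper: the first equivalence via the geometric-scaling argument of Lemma~\ref{lem:alternativeOUAG}, and OULIM$\implies$OGULIM by splitting inputs at a threshold, handling large inputs at $t=0$ via boundedness of $h$ on bounded sets and small inputs via the OULIM time. The only (immaterial) difference is your threshold $S=\sigma_1(r)+c$ with gain $\gamma+\gamma_1+\mathrm{id}$, where the paper uses $R=\gamma^{-1}(\sigma_1(r)+c)$ with gain $\gamma+\gamma_1$.
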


% \pbc{Do we need an example that equivalence does not hold if $h$ is unbounded? I think $\dot x_n = -\frac{x_n}{1 + \abs{u}}$, $h_n(x,u) = (x_n)^n$ over $X = Y = l^1$ should work.}

% \mir{I do not fully get what is $x^n$. But we need that $h$ is defined on the whole domain $X \times U$.
% Don't we assume also continuity of $h$?
% It would be nice to get the example with a continuous $h$.
% }

\begin{proof}
    The equivalence of the two characterizations of OULIM is completely analogous to the one of Lemma \ref{lem:alternativeOUAG}, except that "for all $t \in I$, $t \geq \tau$" must be exchanged by "there exists $t \in I$, $t \leq \tau$".
    
    OGULIM$\implies$OULIM: follows directly from the definition of these concepts. 
    %Lemma~\ref{lem:IOStoBORSandOCEPandOGUAGandOGULIM}.
    
    OULIM$\implies$OGULIM: 
    %We adapt the proof of \cite[Prop. 13]{MiW18b} to systems with outputs. 
    Let $\Sigma$ be OULIM with $\gamma, \tau$ as in Definition \ref{def:OULIM} and $h$ be bounded on bounded sets with parameters $\sigma_1, \gamma_1$ and $c > 0$ as in Definition \ref{def:hBoundedOnBoundedSets}. Fix $\eps,r > 0$, take any $x \in B_r$, any $u \in \U$, and let $R \coloneq \gamma^{-1}(\sigma_1(r) + c)$. 
    
    If $\norm{u}_\U \geq R$, then 
    \begin{align*}
        \norm{y(0,x,u)}_Y &= \norm{h(x,u)}_Y
        \leq \sigma_1\paren{\norm x_X} + \gamma_1\!\paren{\norm u_{\U}} + c \\
        &\leq \sigma_1\paren{r} + c + \gamma_1\!\paren{\norm u_{\U}}
        \leq (\gamma + \gamma_1)\!\paren{\norm u_{\U}}\!.
    \end{align*}
    Conversely, if $\norm{u}_\U \leq R$, then by OULIM there exists $t \in I$, $t \leq \tau\!\paren{\eps , r, R}$ such that
    \begin{align*}
        \norm{y(t,x,u)}_Y
        \leq \eps + \gamma\!\paren{\norm{u}_\U}\!.
    \end{align*}
    Then, $\widetilde \tau\!\paren{\eps , r} = \max\{ \tau\!\paren{\eps , r, R}, 0 \} = \tau\!\paren{\eps , r, R}$ is an upper time bound for the OGULIM behavior that does not depend on $\norm{u}_\U$ such that
    \begin{align*}
        \norm{y(t,x,u)}_Y
        \leq \eps + \widetilde \gamma\!\paren{\norm{u}_\U}
    \end{align*}
    for $\widetilde \gamma \coloneq \gamma + \gamma_1 \in \KK_\infty$.
    Hence, $\Sigma$ is OGULIM.
\end{proof}

% For systems with full-state output, we achieve the following from Lemma \ref{lem:OULIMtoOGULIM} with the terminology from \cite{Mir23}.
% \begin{corollary}\label{cor:bULIMequivULIM}
%     Let $\Sigma = (I, X, \U, \phi, Y,h)$ be a forward complete control system with full-state output.   
%     Then, $\Sigma$ is bULIM if and only if it is ULIM.
% \end{corollary}

\begin{remark}
    Lemma \ref{lem:OULIMtoOGULIM} is even new for systems with full-state output, though property \eqref{ineq:alternativeOULIM} already appeared earlier in \cite[eq. (2.84)]{Mir23}.
    \hspace*{\fill}~\QED
\end{remark}

% We provide an equivalent characterization of OULIM.

% \begin{lemma}\label{lem:alternativeOULIM}
%     % Let $\Sigma = (I, X, \U, \phi, Y,h)$ be a forward complete control system with outputs. Let $h$ be bounded on bounded sets.
%     % Then, 
%     $\Sigma$ is OULIM if and only if there exists
% 	$\gamma \in \KK_\infty$ such that for all $\eps, r, s > 0$ there exists $\tau = \tau(\eps,r,s) \in I$ such that for all $x \in B_r$ and all $u \in B_{s,\U}$ there exists $t \in I$, $t \leq \tau$ such that
% 	\begin{align}\label{ineq:alternativeOULIM}
% 		\norm{y(t,x,u)}_Y \leq \eps + \gamma\!\paren{s}.
% 	\end{align}
% \end{lemma}
% The difference to the definition of OULIM is that \eqref{ineq:alternativeOULIM} is only a restriction in terms of $s$ instead of $u\in B_{s,\U}$.

% Analogously, the following equivalent characterization of OUAG holds true.
% \begin{lemma}\label{lem:alternativeOUAG}
%     % Let $\Sigma = (I, X, \U, \phi, Y,h)$ be a forward complete control system with outputs. Let $h$ be bounded on bounded sets.
%     % Then, 
%     $\Sigma$ is OUAG if and only if there exists
% 	$\gamma \in \KK_\infty$ such that for all $\eps, r, s > 0$ there exists $\tau = \tau(\eps,r,s) \in I$ such that
% 	\begin{align*}%\label{ineq:alternativeOUAG}
% 		\norm{y(t,x,u)}_Y \leq \eps + \gamma\!\paren{s}, \\
%             &x \in B_r, \, u\in B_{s,\U}, \, t \in I\colon t \geq \tau.
% 	\end{align*}
% \end{lemma}
% The proof is completely analogous to the one of Lemma \ref{lem:alternativeOULIM}, except that "there exists $t \in I$, $t \leq \tau$" must be exchanged by "for all $t \in I$, $t \geq \tau$".

\begin{table}
    \centering
    \caption{List of system properties and abbreviations}
    \begin{tabular}{lll}
        Abbr. & Property & Def. \\\hline
        BORS & bounded output reachability sets  & \ref{def:BORS} \\
        FC & forward completeness & \ref{def:forwardComplete} \\
        IOpS & input-to-output practical stability &\ref{def:IOpS} \\
        IOS & input-to-output stability  & \ref{def:IOS} \\
        IOSS & input/output-to-state stability & \ref{def:IOSS} \\
        ISS & input-to-state stability  & \ref{def:ISS} \\
        local OL & local output Lagrange stability  & \ref{def:OL} \\
        OAG & output-asymptotic gain property  & \ref{def:OGUAG} \\
        OBORS & output-bounded output reachability sets & \ref{def:OBORS} \\
        OCAG & output-complete asymptotic gain property &\ref{def:OCAG} \\
        OCEP & output continuity at the equilibrium point & \ref{def:OCEP}\\
        OGUAG & output-global uniform asymptotic gain property  & \ref{def:OGUAG} \\
        OGULIM & output-global uniform limit property & \ref{def:OGULIM} \\
        OL & output Lagrange stability  & \ref{def:OL} \\
        OLIM & output-limit property & \ref{def:OLIM} \\
        OOUGB & output-to-output-uniform global boundedness & \ref{def:OOUGB} \\
        OOULIM & output-to-output uniform limit property & \ref{def:OOULIM} \\
        %OSS & output-to-state stability & \ref{def:IOSS} \\
        OUAG & output-uniform asymptotic gain property  & \ref{def:OGUAG} \\
        OUGB & output-uniform global boundedness  & \ref{def:OULS} \\
        OUGS & output-uniform global stability  & \ref{def:OULS} \\
        OULIM & output-uniform limit property & \ref{def:OULIM} \\
        OULS & output-uniform local stability  & \ref{def:OULS}
    \end{tabular}
    \label{tab:Abbreviations}
\end{table}

\section{Superposition theorems}\label{sec:superpositionThms}

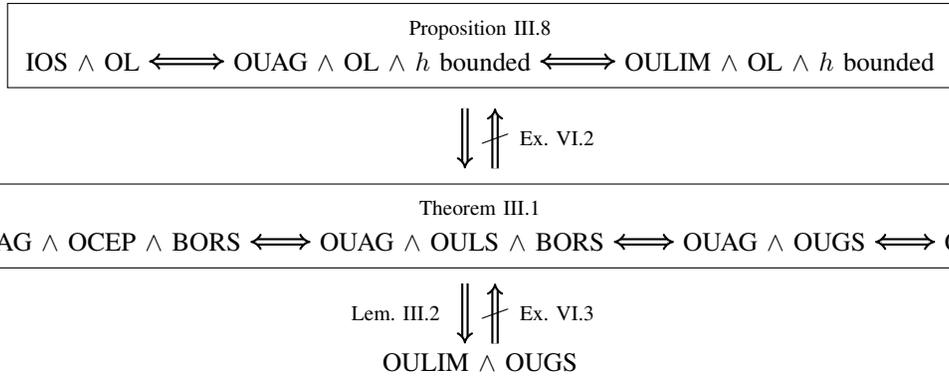
\begin{figure*}[htbp]
    \vspace{.3cm}
    \centering
    \begin{tikzpicture}%[transform canvas={scale=0.7}]
        % middle box nodes
        \node (IOS) at (-6,0) {IOS};
        \node[right=.8cm of IOS.east] (OUAG+OCEP+BORS) {OUAG $\land$ OCEP $\land$ BORS};
        \node[right=.8cm of OUAG+OCEP+BORS.east] (N5) {OUAG $\land$ OULS $\land$ BORS};
        \node[right=.8cm of N5.east] (N6) {OUAG $\land$ OUGS};
        \node[right=.8cm of N6.east] (N7) {OCAG $\land$ OULS};
        \node[fit=(IOS) (N7),draw=none,inner ysep=0mm](dummyBlockIOS) {};
        \node[above=0.1cm of dummyBlockIOS,anchor=base] (titleOLIOS) {\footnotesize Theorem \ref{thm:IOSequivalences}};
        \node[rectangle,fit=(IOS) (N7) (titleOLIOS),draw=black,inner ysep=.5mm](BlockIOS) {};

        % upper box nodes
        \node[above = 1.2cm of BlockIOS] (dummyOLIOS) {};
        \node[left = -0.8cm of dummyOLIOS.center] (OUAG+OL) {OUAG $\land$ OL $\land$ $h$ bounded};
        \node[left = 1cm of OUAG+OL] (IOS+OL) {IOS $\land$ OL};
        \node[right = 1cm of OUAG+OL] (OULIM+OL) {OULIM $\land$ OL $\land$ $h$ bounded};
        \node[fit=(IOS+OL)(OULIM+OL),draw=none,inner ysep=0mm](dummyBlockOLIOS) {};
        \node[above=0.1cm of dummyBlockOLIOS,anchor=base] (titleOLIOS) {\footnotesize Proposition \ref{prop:OLIOSequivalences}};
        \node[rectangle,fit=(IOS+OL)(OULIM+OL)(titleOLIOS), draw=black,inner ysep=.5mm](BlockIOSOL) {};
        \path
        (IOS+OL) edge[thick,double,double equal sign distance,{Implies[]}-{Implies[]}] (OUAG+OL)
        (OULIM+OL) edge[thick,double,double equal sign distance,{Implies[]}-{Implies[]}] (OUAG+OL);

        % lower node
        \node[below = 0.9cm of BlockIOS] (OULIM+OUGS) {OULIM $\land$ OUGS};
        %
        % Connections between upper and middle box
        \path
        ([shift={(-0.2cm,-0.3cm)}]dummyOLIOS.south) edge[thick,double,double equal sign distance,-{Implies[]}]
        ([shift={(-0.2cm,0.1cm)}]BlockIOS.north)
        ([shift={(0.2cm,0.1cm)}]BlockIOS.north) edge[thick,double,double equal sign distance,-{Implies[]},degil]
        node[right=.2cm]{\footnotesize Example \ref{ex:IOSnottoOL}} ([shift={(0.2cm,-0.3cm)}]dummyOLIOS.south)
        (IOS) edge[thick,double,double equal sign distance,{Implies[]}-{Implies[]}] (OUAG+OCEP+BORS)
        (OUAG+OCEP+BORS) edge[thick,double,double equal sign distance,{Implies[]}-{Implies[]}] (N5)
        (N5) edge[thick,double,double equal sign distance,{Implies[]}-{Implies[]}] (N6)
        (N6) edge[thick,double,double equal sign distance,{Implies[]}-{Implies[]}] (N7);
        %
        % Connections between middle box and OULIM+OUGS
        \path([shift={(-0.2cm,-0.1cm)}]BlockIOS.south) edge[thick,double,double equal sign distance,-{Implies[]}]
        node[left=.2cm]{\footnotesize Lemma \ref{lem:IOStoBORSandOCEPandOGUAGandOGULIM}} ([shift={(-0.2cm,0cm)}]OULIM+OUGS.north)
        ([shift={(0.2cm,0cm)}]OULIM+OUGS.north) edge[thick,double,double equal sign distance,-{Implies[]},degil]
        node[right=.2cm]{\footnotesize Example \ref{ex:OGULIMandOUGSnottoIOS}} ([shift={(0.2cm,-0.1cm)}]BlockIOS.south);
    \end{tikzpicture}
    \caption{Diagram of implications.}
    \label{fig:mainResult}
\end{figure*}

The main result of this paper is summarized in Figure~\ref{fig:mainResult}. First, we establish several equivalent characterizations of IOS in Theorem \ref{thm:IOSequivalences}. The course of action is depicted in Figure~\ref{fig:implicationDiagramIOS}. In Proposition \ref{prop:OCAGequivalences}, we give a superposition theorem for OCAG in terms of OUAG  and OGUAG, respectively. Then, we will show equivalences for IOS $\land$ OL in Proposition~\ref{prop:OLIOSequivalences}. By Example \ref{ex:IOSnottoOL}, it becomes clear that the notions of IOS and OL are independent of each other. Furthermore, from Lemma~\ref{lem:IOStoBORSandOCEPandOGUAGandOGULIM}, it follows that IOS implies OULIM~$\land$~OUGS, but the converse implication does not hold true in general as explained in Example \ref{ex:OGULIMandOUGSnottoIOS}.

\subsection{IOS superposition theorem}

We start by stating the following characterization of IOS.
\begin{theorem}[IOS superposition theorem]
\label{thm:IOSequivalences}
    Let $\Sigma = (I, X, \U, \phi, Y,h)$ be a forward complete control system with outputs. Then, the following statements are equivalent:
    \begin{enumerate}
        \item $\Sigma$ is IOS. \label{itm:thmEquivStatement1}
        \item $\Sigma$ is OUAG, OCEP and BORS. \label{itm:thmEquivStatement2}
        \item $\Sigma$ is OUAG, OULS and BORS. \label{itm:thmEquivStatement3}
        \item $\Sigma$ is OUAG and OUGS. \label{itm:thmEquivStatement4}
        \item $\Sigma$ is OCAG and OULS.
        \label{itm:thmEquivStatement5}
        \pb{\item $\Sigma$ is OCAG and OCEP.
        \label{itm:thmEquivStatement6}}
        % \item \pbc{$\Sigma$ is OCAG and OUGS. (Should we keep it as part of the theorem? It is an intermediate step of the proof.) 
        %
        % \mir{You should follow your feelings). As a rule of thumb, it is a good idea to include the weakest characterization (which will be used for the verification of IOS), and the strongest one, which will be used if we need to imply smth from ISS.}
        % \label{itm:thmEquivStatement6}}
    \end{enumerate}
\end{theorem}

\begin{proof}
    \pb{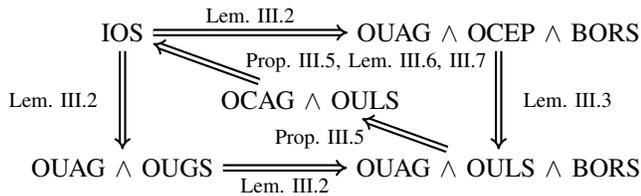
\begin{figure}[htb]
    \centering
    \begin{tikzpicture}%[transform canvas={scale=0.7}]
        % middle box nodes
        \node (IOS) at (0,0) {IOS};
        \node (OUAG+OCEP+BORS) at (4.4,-1.1) {OUAG $\land$ OCEP $\land$ BORS};
        \node (OUAG+OULS+BORS) at (0,-2.2) {OUAG $\land$ OULS $\land$ BORS};
        \node (OUAG+OUGS) at (4.4,0) {OUAG $\land$ OUGS};
        \node (OCAG+OULS) at (0,-1.1) {OCAG $\land$ OULS};
        \node (OCAG+OCEP) at (4.4,-2.2) {OCAG $\land$ OCEP};
        %
        % Connections between upper and middle box
        \path
        (IOS) edge[thick,double,double equal sign distance,-{Implies[]}] node[above]{\footnotesize Lem. \ref{lem:IOStoBORSandOCEPandOGUAGandOGULIM}} (OUAG+OUGS)
        (OUAG+OUGS) edge[thick,double,double equal sign distance,-{Implies[]}] node[right=2mm]{\footnotesize Lem. \ref{lem:IOStoBORSandOCEPandOGUAGandOGULIM}} (OUAG+OCEP+BORS)
        (OUAG+OCEP+BORS) edge[thick,double,double equal sign distance,-{Implies[]}] node[below right=-0mm and -2mm]{\footnotesize Lem. \ref{lem:OUAGandOCEPtoOULS}} (OUAG+OULS+BORS)
        (OUAG+OULS+BORS) edge[thick,double,double equal sign distance,-{Implies[]}] node[left=2mm]{\footnotesize Prop. \ref{prop:OCAGequivalences}} (OCAG+OULS)
        (OCAG+OULS) edge[thick,double,double equal sign distance,-{Implies[]},align=center] node[left=2mm]{{\footnotesize Prop. \ref{prop:OCAGequivalences},}\\ {\footnotesize Lem. \ref{lem:OUGBandOULStoOUGS}, \ref{lem:OCAGandOUGStoIOS}}} (IOS)
        (OUAG+OCEP+BORS) edge[thick,double,double equal sign distance,{Implies[]}-{Implies[]}] node[right=2mm]{\footnotesize Prop. \ref{prop:OCAGequivalences}} (OCAG+OCEP);
    \end{tikzpicture}
    \caption{Diagram of implications for the proof of Theorem \ref{thm:IOSequivalences}.}
    \label{fig:implicationDiagramIOS}
\end{figure}}
    \pb{We prove the Theorem as depicted in Figure~\ref{fig:implicationDiagramIOS}.}
    %The implication \ref{itm:thmEquivStatement1}$\implies$\ref{itm:thmEquivStatement2} follows from Lemma~\ref{lem:IOStoBORSandOCEPandOGUAGandOGULIM}.
    The \pb{implications \ref{itm:thmEquivStatement1}$\implies$\ref{itm:thmEquivStatement5} and \ref{itm:thmEquivStatement5}$\implies$\ref{itm:thmEquivStatement2} follow} from Lemma~\ref{lem:IOStoBORSandOCEPandOGUAGandOGULIM}. Lemma \ref{lem:OUAGandOCEPtoOULS} gives the implication \ref{itm:thmEquivStatement2}$\implies$\ref{itm:thmEquivStatement3}. From Proposition \ref{prop:OCAGequivalences}, 
    we have \ref{itm:thmEquivStatement3}$\implies$\ref{itm:thmEquivStatement5}.
    For the implication \ref{itm:thmEquivStatement5}$\implies$\ref{itm:thmEquivStatement1}, we use Proposition \ref{prop:OCAGequivalences} and Lemma \ref{lem:OUGBandOULStoOUGS} to achieve
    \begin{align*}
        \text{OCAG} \land \text{OULS} 
        \implies \text{OUGB} \land \text{OULS}
        \implies \text{OUGS}.
    \end{align*}
    OCAG $\land$ OUGS$\implies$IOS follows from Lemma \ref{lem:OCAGandOUGStoIOS} and closes the circle of equivalences.
    % Finally,  \ref{itm:thmEquivStatement1}$\implies$\ref{itm:thmEquivStatement4} and \ref{itm:thmEquivStatement4}$\implies$\ref{itm:thmEquivStatement3} hold true by Lemma \ref{lem:IOStoBORSandOCEPandOGUAGandOGULIM}.
    Finally,  \pb{\ref{itm:thmEquivStatement2}$\iff$\ref{itm:thmEquivStatement6} holds true by Prop \ref{prop:OCAGequivalences}.}
    % From Lemma \ref{lem:OUAGandOULSandBORStoOUGS}, it follows that \ref{itm:thmEquivStatement3}$\implies$\ref{itm:thmEquivStatement4}. 
    % Finally,  \ref{itm:thmEquivStatement4}$\implies$\ref{itm:thmEquivStatement1} holds true by Lemma \ref{lem:OUAGandOUGStoIOS}.
\end{proof}

Next, we present the technical lemmas, which we use in the proof of Theorem~\ref{thm:IOSequivalences}.
\begin{lemma}\label{lem:IOStoBORSandOCEPandOGUAGandOGULIM}
    Let $\Sigma = (I, X, \U, \phi, Y,h)$ be a forward complete control system with outputs. Then, the implications depicted in Figure \ref{fig:IOSimplications} hold true.
    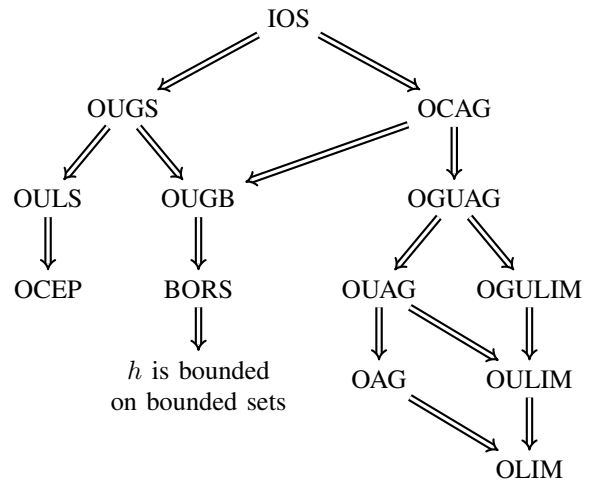
\begin{figure}[htbp]
    \centering
    \begin{tikzpicture}
        % upper node
        \node (IOS) at (0,0) {IOS};

        % upper middle nodes
        \node[below left =1.0cm and 2.2cm of IOS.mid,anchor=mid] (OUGS) {OUGS};
        \node[below right =1.0cm and 2.2cm of IOS.mid,anchor=mid] (OCAG) {OCAG};
        \node[below =1.0cm of OCAG.mid,anchor=mid] (OGUAG) {OGUAG};
        \path
        (IOS) edge[thick,double,double equal sign distance,-{Implies[]}] node[anchor=south east]{} (OUGS)
        (IOS) edge[thick,double,double equal sign distance,-{Implies[]}] node[anchor=south west]{} (OCAG)
        (OCAG) edge[thick,double,double equal sign distance,-{Implies[]}] node[anchor=south west]{} (OGUAG);

        % lower middle nodes
        % implied by OUGS
        % \node[below right =1.2cm and 1.0cm of OUGS.mid,anchor=mid] (OULS) {OULS};
        % \node[below left =1.2cm and 1.0cm of OUGS.mid,anchor=mid] (OUGB) {OUGB};
        \node[below left =1.0cm and 1.0cm of OUGS.mid,anchor=mid] (OULS) {OULS};
        \node[below right =1.0cm and 1.0cm of OUGS.mid,anchor=mid] (OUGB) {OUGB};
        \path
        (OUGS) edge[thick,double,double equal sign distance,-{Implies[]}] node[anchor=south east]{} (OUGB)
        (OUGS) edge[thick,double,double equal sign distance,-{Implies[]}] node[anchor=south west]{} (OULS)
        (OCAG) edge[thick,double,double equal sign distance,-{Implies[]}] node[anchor=south east]{} (OUGB);

        % implied by OGUAG
        \node[below left =1.0cm and 1.0cm of OGUAG.mid,anchor=mid] (OUAG) {OUAG};
        \node[below right =1.0cm and 1.0cm of OGUAG.mid,anchor=mid] (OGULIM) {OGULIM};
        \path
        (OGUAG) edge[thick,double,double equal sign distance,-{Implies[]}] node[anchor=south east]{} (OUAG)
        (OGUAG) edge[thick,double,double equal sign distance,-{Implies[]}] node[anchor=south west]{} (OGULIM);

        % lower nodes
        % implied by OULS
        \node[below =1.0cm of OUGB.mid,anchor=mid] (BORS) {BORS};
        \node[below = (1.3cm) of BORS.mid,align=center,anchor=mid] (hbounded) {$h$ is bounded\\on bounded sets};
        \path
        (OUGB) edge[thick,double,double equal sign distance,-{Implies[]}]  node[anchor=east]{} (BORS)
        (BORS) edge[thick,double,double equal sign distance,-{Implies[]}]  node[anchor=east]{} (hbounded);
        
        % implied by OULS
        \node[below =1.0cm of OULS.mid,anchor=mid] (OCEP) {OCEP};
        \path
        (OULS) edge[thick,double,double equal sign distance,-{Implies[]}]  node[anchor=east]{} (OCEP);

        % % implied by OGUAG
        % \node[below =1.2cm of OGUAG.mid,transparent] (a) {\vphantom{H}};
        % \node[below =1.2cm of a.mid] (OULIM) {OULIM};
        % \path
        % (OUAG) edge[thick,double,double equal sign distance,-{Implies[]}] node[anchor=south west]{} (OULIM)
        % (OGULIM) edge[thick,double,double equal sign distance,-{Implies[]}] node[anchor=south east]{} (OULIM);

        % implied by OGUAG
        \node[below =1.0cm of OGULIM.mid,anchor=mid] (OULIM) {OULIM};
        \path
        (OUAG) edge[thick,double,double equal sign distance,-{Implies[]}] node[anchor=south west]{} (OULIM)
        (OGULIM) edge[thick,double,double equal sign distance,-{Implies[]}] node[anchor=south east]{} (OULIM);
        \node[below =1.0cm of OUAG.mid,anchor=mid] (OAG) {OAG};
        \node[below =1.0cm of OULIM.mid,anchor=mid] (OLIM) {OLIM};
        \path
        (OUAG) edge[thick,double,double equal sign distance,-{Implies[]}] node[anchor=south east]{} (OAG)
        (OAG) edge[thick,double,double equal sign distance,-{Implies[]}] node[anchor=south west]{} (OLIM)
        (OULIM) edge[thick,double,double equal sign distance,-{Implies[]}] node[anchor=south east]{} (OLIM);
    \end{tikzpicture}
    \caption{Diagram of elementary implications summarized in Lemma \ref{lem:IOStoBORSandOCEPandOGUAGandOGULIM}.}
    \label{fig:IOSimplications}
\end{figure}
    % \begin{enumerate}[label=\alph*)]
    %     \item IOS$\implies$OUGS$\implies$BORS,

    %     \item OUGS$\implies$OULS$\implies$OCEP,
    %     \item IOS$\implies$OGUAG$\implies$OGULIM.
    % \end{enumerate}
\end{lemma}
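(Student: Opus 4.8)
The plan is to verify each arrow in Figure~\ref{fig:IOSimplications} directly from the relevant definitions, organized into three batches. The first batch collects the implications obtained by plain quantifier manipulation: IOS~$\implies$~OCAG holds with additive constant $c=0$ and the same $\beta,\gamma$; OUGS~$\implies$~OUGB by adding any $c>0$; OUGS~$\implies$~OULS by restricting to a single ball $B_r\times B_{r,\U}$; and OGUAG~$\implies$~OUAG, OGULIM~$\implies$~OULIM by ignoring the additional radius $s$, since a time $\tau(\eps,r)$ that works for every $u\in\U$ in particular works for $u\in B_{s,\U}$. For the gain-to-limit arrows OGUAG~$\implies$~OGULIM, OUAG~$\implies$~OULIM and OAG~$\implies$~OLIM, the key point is that an estimate valid for all $t\geq\tau$ holds in particular at $t=\tau$, which furnishes the time $t\leq\tau$ (namely $t=\tau\in I$) required by the limit-type properties; and for OUAG~$\implies$~OAG, OULIM~$\implies$~OLIM, where the target is quantified over a single pair $(x,u)$, I set $r:=\norm{x}_X+1$, $s:=\norm{u}_\U+1$ so that $x\in B_r$, $u\in B_{s,\U}$, and invoke the corresponding uniform time.

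The second batch needs a little comparison-function arithmetic. For IOS~$\implies$~OUGS I use monotonicity of $\beta\in\KK\LL$ in its time argument: $\norm{y(t,x,u)}_Y\leq\beta(\norm{x}_X,t)+\gamma(\norm{u}_\U)\leq\beta(\norm{x}_X,0)+\gamma(\norm{u}_\U)$, and $\sigma(r):=\beta(r,0)+r\in\KK_\infty$ works. For OCAG~$\implies$~OGUAG, since $\beta(r+c,\ph)\in\LL$ there is for each $\eps,r>0$ a $\tau(\eps,r)\in I$ with $\beta(r+c,t)\leq\eps$ for all $t\geq\tau$, hence $\beta(\norm{x}_X+c,t)\leq\eps$ whenever $x\in B_r$. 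For OCAG~$\implies$~OUGB I additionally apply the weak triangle inequality $\alpha(a+b)\leq\alpha(2a)+\alpha(2b)$, valid for $\alpha\in\KK$, to $\beta(\norm{x}_X+c,0)$, absorb $\beta(2c,0)$ into the additive constant (enlarged if needed so as to be positive), and majorize $r\mapsto\beta(2r,0)$ by a $\KK_\infty$ function. Then OUGB~$\implies$~BORS is immediate, as the OUGB estimate bounds $\sup_{\norm{x}_X<C,\,\norm{u}_\U<C,\,t<\tau}\norm{y(t,x,u)}_Y$ by $\sigma(C)+\gamma(C)+c<\infty$; and OULS~$\implies$~OCEP follows by choosing, for given $\eps>0$ and $\tau\in I$, a $\delta\in(0,r)$ (with $r,\sigma,\gamma$ from OULS) so small that $\sigma(\delta)+\gamma(\delta)\leq\eps$, whereupon $\norm{x}_X\leq\delta$ and $\norm{u}_\U\leq\delta$ force $x\in B_r$, $u\in B_{r,\U}$ and hence $\norm{y(t,x,u)}_Y\leq\sigma(\delta)+\gamma(\delta)\leq\eps$ for all $t\in I$ (so $\delta$ need not even depend on $\tau$).

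The remaining arrow, BORS~$\implies$~($h$ bounded on bounded sets), is the only one deserving a remark, and is where I expect the (minor) obstacle. Evaluating the output at $t=0$ and using the identity property~\ref{cond:identityProperty} gives $y(0,x,u)=h(\phi(0,x,u),u(0))=h(x,u(0))$, so BORS with any fixed $\tau>0$ bounds $\norm{h(x,u(0))}_Y$ uniformly over $\norm{x}_X<C$, $\norm{u}_\U<C$. To turn this into a bound on $h(x,v)$ valid for every input \emph{value} $v\in B_{C,U}$, one represents $v$ as $u(0)$ for some input $u\in\U$ of comparable norm (for instance a constant input); this is the single place where a mild richness assumption on the input space $\U$ is needed. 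Apart from this bookkeeping I do not anticipate any genuine difficulty.
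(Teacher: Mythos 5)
Your proposal is correct, and for the arrows the paper actually writes out (IOS$\implies$OUGS via $\beta(\cdot,0)$, IOS$\implies$OCAG with $c=0$, OCAG$\implies$OGUAG via the $\LL$-decay of $\beta(r+c,\cdot)$, OULS$\implies$OCEP) it coincides with the paper's argument -- the paper dismisses all remaining arrows as ``clear'', and your explicit verifications of those are exactly the intended elementary quantifier arguments; your choice $\sigma(r)=\beta(r,0)+r$ is in fact slightly more careful than the paper, which silently treats $\beta(\cdot,0)$ as a $\KK_\infty$-function. Concerning the one obstacle you flag, BORS$\implies$($h$ bounded on bounded sets): your concern is legitimate if Definition \ref{def:hBoundedOnBoundedSets} is read literally with balls $B_{C,U}$ in the input-\emph{value} space, since the axioms on $\U$ do not guarantee that every value $v\in B_{C,U}$ arises as $u(0)$ for some $u\in\U$ of comparable norm (e.g.\ $\U=\{0\}$ with an $h$ unbounded in its second argument gives a counterexample), so under that reading one indeed needs a richness hypothesis such as constant inputs with controlled norm. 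Note, however, that everywhere the paper subsequently uses this property (the lemma following Definition \ref{def:hBoundedOnBoundedSets}, Lemma \ref{lem:OULIMtoOGULIM}, Proposition \ref{prop:OLIOSequivalences}) it only invokes the bound $\norm{y(0,x,u)}_Y=\norm{h(x,u(0))}_Y\leq\sigma_1(\norm{x}_X)+\gamma_1(\norm{u}_\U)+c$ for input \emph{functions} $u\in\U$, i.e.\ it measures the second argument by $\norm{u}_\U$; that functional version follows from BORS by restricting to $t=0$ and needs no extra assumption, so you can either prove the arrow in that form or keep your constant-input hypothesis -- both repair the same imprecision that the paper's ``Remaining implications: Clear'' glosses over.
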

% \pbc{Maybe OAG and OLIM should be included in the graphic and the lemma.}

% \mir{Yes, it is a good idea.}

% \pbc{\checkmark,  Done.}

\begin{proof}
    IOS$\implies$OUGS:\label{lem:IOStoOUGS} Let $\Sigma$ be IOS. Then, there exist $\beta \in \KK\LL$ and $\gamma \in \KK_\infty$ such that $\forall x \in X$, $\forall u \in \U$ and $\forall t \in I$ the following holds:
    \begin{align*}
        \norm{y(t,x,u)}_Y \leq \beta\!\paren{\norm{x}_X,t} + \gamma\!\paren{\norm{u}_\U}\!.
    \end{align*}
    We can estimate $\beta(\norm{x}_X,t) \leq \beta\!\paren{\norm{x}_X,0} \eqcolon \sigma(\norm{x}_X)$ for all $x \in X$, $t \in I$ and $\sigma \in \KK_\infty$. Then, $\Sigma$ is OUGS.
    
    % \textbf{OUGS$\implies$BORS:}\label{lem:OUGStoBORS} 
    % If $x \in X$ and $u \in \U$ satisfy $\norm{x}_X\!, \norm{u}_\U < C$, then it holds
    % \begin{align*}
    %     \norm{y(t,x,u)}_Y \leq \sigma\!\paren{C} + \gamma\!\paren{C} < \infty, \qquad t \in I.
    % \end{align*}
    % Hence $\Sigma$ is BORS.
    % 
    % \textbf{OUGS$\implies$OULS:}\label{lem:OUGStoOULS}
    % Let $\Sigma$ be OUGS with $\sigma,\gamma \in \KK_\infty$. Then especially for $x \in B_r$ and $u \in B_{r,\U}$ it holds that
    % \begin{align*}
    %     \norm{y(t,x,u)}_Y \leq \sigma\!\paren{\norm{x}_X} + \gamma\!\paren{\norm{u}_\U}\!.
    % \end{align*}
    % Therefore, $\Sigma$ is OULS.
    % \textbf{OUGS$\implies$BORS, OUGS$\implies$OULS:} Clear.
    
    OULS$\implies$OCEP:\label{lem:OULStoOCEP}
    This is a direct consequence of Lemma~\ref{lem:redefineOLOULS}.
    % For any $\eps > 0$, we choose $\delta \coloneq \min\!\braces{\sigma^{-1}\!\paren{\frac{\eps}{2}},\gamma^{-1}\!\paren{\frac{\eps}{2}}}$ and obtain for $t \in I$, $\norm{x}_X \leq \delta$, $\norm{u}_{\U} \leq \delta$
    % \begin{align*}
    %      \norm{y(t,x,u)}_Y 
    %      &\leq \sigma\!\paren{\delta} + \gamma\!\paren{\delta} \\ 
    %      &\leq \sigma\!\paren{\sigma^{-1}\!\paren{\frac{\eps}{2}}} + \gamma\!\paren{\gamma^{-1}\!\paren{\frac{\eps}{2}}} 
    %      = \eps.
    % \end{align*}
    % We fix $\tau \in I$ and restrict $t \leq \tau$ to receive OCEP.
    
    % IOS$\implies$OGUAG:\label{lem:IOStoOGUAG} Let $\Sigma$ be IOS. Let $r > 0$. Then, as $\beta(r,\ph) \in \LL$, we can define
    % \begin{align*}
    %     \tau(\eps,r) \coloneq \min\!\braces{t \in I \,\middle|\, \beta(r, t) \leq \eps}, \quad \forall \eps, r > 0.
    % \end{align*}
    % Then, for all $x \in B_r$, $t \geq \tau(\eps,r)$ it holds that
    % \begin{align*}
    %     \norm{y(t,x,u)}_Y 
    %     &\leq \beta\!\paren{\norm{x}_X,t} + \gamma\!\paren{\norm{u}_\U} \\
    %     &\leq \beta\!\paren{r,\tau(\eps,r)} + \gamma\!\paren{\norm{u}_\U}
    %     \leq \eps + \gamma\!\paren{\norm{u}_\U}\!.
    % \end{align*}
    % Hence, $\Sigma$ is OGUAG.

    IOS$\implies$OCAG:\label{lem:IOStoOCAG} The implication follows immediately from setting $c = 0$ in the definition of OCAG.
    
    OCAG$\implies$OGUAG:\label{lem:OCAGtoOGUAG} Let $\Sigma$ be OCAG. We show OGUAG: For all $\eps,r > 0$, the map $\tau$ can be chosen by
    \begin{align*}
        \tau(\eps,r) = \min\!\braces{t \in I\,\middle|\, \beta(r + c,t) \leq \eps}\!.
    \end{align*}
    As $\beta$ is strictly decreasing to zero in $t$, this minimum exists.
    
    % \textbf{OGUAG$\implies$OGULIM:}\label{lem:OGUAGtoOGULIM} Let $\Sigma$ be OGUAG. Then for every $\eps, r > 0$ and $t \geq \tau(\eps,r)$ it holds that for all $\norm{x}_X\leq r$, $u\in \U$, $t \in I\colon t \geq \tau$:
    % \begin{align*} %\label{ineq:OGUAG}
    %     \norm{y(t,x,u)}_Y \leq \eps + \gamma\!\paren{\norm{u}_{\U}}.
    % \end{align*}
    % This holds especially true for $t = \tau(\eps,r)$. Therefore, $\Sigma$ is OGULIM. Analogusly it follows that the implication OUAG$\implies$OULIM.
    
    % \textbf{OGUAG$\implies$OUAG:}\label{lem:OGUAGtoOUAG} By definition, OGUAG implies OUAG and OGULIM implies OULIM.
    %\textbf{OGUAG$\implies$OGULIM, OGUAG$\implies$OUAG, OGULIM$\implies$OULIM:}
    Remaining implications: Clear.
\end{proof}

% \mir{\color{red}%Some strange thing with the color is happening...
% In Fig. 3 there is no OCAG. Also one should think again about abbreviations and concept naming. 
% \begin{enumerate}
%     \item Why OCAG, but OGUAG (G before O)?
%     \item Why Global in OUGB is after U, and in GULIM it is before U?
%     \item Who was that guy who invented the name for CUAG property? Why do we need that U if we have C? I propose to call it just CAG, and OCAG respectively.
%     \item So maybe it should better be OUGLIM, OUGAG, OCAG?
% \end{enumerate}
% }

% \pbc{In Fig. 3 there is no OCAG. Also one should think again about abbreviations and concept naming. 
% \begin{enumerate}
%     \item Personally I would have chosen COUAG but as you proposed it, I just adapted to it. So from my  side,  we can  easily change.
%     \item I feel like ULIM is established and we now split it to a global and a local version. So I slightly in favor of keeping the current naming.
%     \item I think you should have a strict discussion with this person. The main reasons for keeping the naming is consistency: 1. The name was already  used before, 2. It is based on UAG and generalizing it.
%     \mir{CUAG is stricter than UAG, but it does not look as based on UAG (actually, it looks more as IOpS). In MiW18b there is also SAG - "strong AG". Thus, just "complete AG" sounds already good.}
%     \item I can live with each of the names.
% \mir{
% OK, then what about 
% OGULIM, OGUAG, OCAG?
% }
% \pbc{Ok, I will do it.}
% \end{enumerate}
% }

\begin{lemma}
\label{lem:OUAGandOCEPtoOULS}
    Let $\Sigma = (I, X, \U, \phi, Y,h)$ be forward complete. If $\Sigma$ is OUAG and OCEP, then it is OULS.
\end{lemma}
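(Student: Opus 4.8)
The plan is to reduce OULS to its $\eps$-$\delta$ form from Lemma~\ref{lem:redefineOLOULS}\,2): it suffices to show that for every $\eps>0$ there is $\delta>0$ such that $\norm{x}_X\le\delta$ and $\norm{u}_\U\le\delta$ force $\norm{y(t,x,u)}_Y\le\eps$ for \emph{all} $t\in I$. The key idea is to split the time set at the uniform-attraction time: OUAG will control the output for large $t$, and OCEP will control it on the remaining initial time interval $\{t\in I:t\le\tau\}$.

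Fix $\eps>0$. First I would apply OUAG, with the gain $\gamma\in\KK_\infty$ it provides, to the parameters $\eps/2$, $r=1$, $s=1$, obtaining $\tau\coloneq\tau(\eps/2,1,1)\in I$ such that $\norm{y(t,x,u)}_Y\le \eps/2+\gamma(\norm{u}_\U)$ whenever $x\in B_1$, $u\in B_{1,\U}$ and $t\ge\tau$. Next, with this $\tau$ now fixed, I would invoke OCEP with accuracy $\eps$ to get $\delta_0=\delta(\eps,\tau)>0$ such that $\norm{x}_X\le\delta_0$, $\norm{u}_\U\le\delta_0$ and $t\le\tau$ imply $\norm{y(t,x,u)}_Y\le\eps$. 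Finally I would set
\[
\delta\coloneq\tfrac{1}{2}\min\braces{1,\ \delta_0,\ \gamma^{-1}(\eps/2)},
\]
which is strictly positive since $\gamma\in\KK_\infty$ is a bijection of $\R^+_0$, and which satisfies $\delta<1$, $\delta\le\delta_0$ and $\gamma(\delta)\le\eps/2$.

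Now take $x$ with $\norm{x}_X\le\delta$ and $u$ with $\norm{u}_\U\le\delta$. Since $\delta\le\delta_0$, OCEP gives $\norm{y(t,x,u)}_Y\le\eps$ for all $t\in I$ with $t\le\tau$. Since $\norm{x}_X<1$ and $\norm{u}_\U<1$, i.e. $x\in B_1$ and $u\in B_{1,\U}$, OUAG gives, for all $t\in I$ with $t\ge\tau$,
\[
\norm{y(t,x,u)}_Y\le \tfrac{\eps}{2}+\gamma(\norm{u}_\U)\le \tfrac{\eps}{2}+\gamma(\delta)\le \tfrac{\eps}{2}+\tfrac{\eps}{2}=\eps.
\]
As $I$ is the union of $\{t\in I:t\le\tau\}$ and $\{t\in I:t\ge\tau\}$, the bound $\norm{y(t,x,u)}_Y\le\eps$ holds for all $t\in I$, and Lemma~\ref{lem:redefineOLOULS}\,2) then yields OULS. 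The argument is essentially routine; the only points needing a little care are keeping the strict/non-strict ball inequalities consistent (handled by the factor $\tfrac{1}{2}$ in the definition of $\delta$) and observing that the two time regimes overlap at $t=\tau$ and together exhaust $I$. I do not expect any genuine obstacle here.
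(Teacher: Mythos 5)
Your proposal is correct and follows essentially the same route as the paper: apply OUAG with parameters $\eps/2$, $r=s=1$ to fix the attraction time $\tau$, then use OCEP with that $\tau$ on the initial time interval, and shrink $\delta$ so that additionally $\gamma(\delta)\le\eps/2$, concluding via the $\eps$-$\delta$ characterization of OULS in Lemma~\ref{lem:redefineOLOULS}. The only cosmetic difference is your extra factor $\tfrac12$ to handle open versus closed balls, which the paper's choice $\widetilde\delta=\min\{\delta,1,\gamma^{-1}(\eps/2)\}$ glosses over.
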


\begin{proof}
    Let $\eps > 0$. We choose $r,s = 1$ and define $T \coloneq \tau\!\paren{\frac{\eps}{2},r,s}$. By OUAG, for every $x \in B_r$, $u\in B_{s,\U}\colon \norm{u}_\U \leq \gamma^{-1}\!\paren{\frac{\eps}{2}}$, and $t \in I\colon\ t \geq T$, it holds that
    \begin{align*} %\label{ineq:OGUAG}
    	\norm{y(t,x,u)}_Y \leq \tfrac{\eps}{2} + \gamma\!\paren{\norm{u}_{\U}} \leq \eps.
	\end{align*}
    By OCEP, there exists $\delta = \delta(\eps, T)$ such that the implication
    \begin{align*} %\label{ineq:OCEP}
		t \in I\colon t \leq T, \, \norm{x}_X \leq \delta, \, \norm{u}_{\U} \leq \delta \implies \norm{y(t,x,u)}_Y \leq \eps
	\end{align*}
    holds. By choosing $\widetilde \delta = \min\!\braces{\delta, 1,\gamma^{-1}\!\paren{\frac{\eps}{2}}}$, it follows that
    \begin{align*}
        \norm{x}_X \leq \widetilde\delta, \ \norm{u}_{\U} \leq \widetilde\delta,\  t \in I \ \implies \ \norm{y(t,x,u)}_Y \leq \eps,
    \end{align*}
    i.e., $\Sigma$ is OULS.
\end{proof}

Next, we show the following technical result:
\begin{lemma}\label{lem:OUAGandBORStoOUGB}
    Let $\Sigma = (I, X, \U, \phi, Y,h)$ be a forward complete control system with outputs. Let $\Sigma$ be OUAG and BORS. Then, $\Sigma$ is OUGB.
\end{lemma}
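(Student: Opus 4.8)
The plan is to cut the time axis at the uniform transition time supplied by OUAG and to estimate the output separately on the two resulting pieces, controlling the bounded piece by BORS. Fix the gain $\gamma\in\KK_\infty$ from the definition of OUAG. For each $n\in\N$ put $\tau_n\coloneq\tau(1,n,n)\in I$, so that OUAG gives
\[
\norm{x}_X<n,\ \norm{u}_\U<n,\ t\ge\tau_n\ \implies\ \norm{y(t,x,u)}_Y\le 1+\gamma(\norm{u}_\U).
\]
For the complementary range of times I would apply BORS with $C=n$ and time horizon $\max\{\tau_n,1\}\in I$ to obtain a finite number
\[
M_n\coloneq\sup\bigl\{\norm{y(t,x,u)}_Y : \norm{x}_X<n,\ \norm{u}_\U<n,\ t<\max\{\tau_n,1\}\bigr\}<\infty,
\]
which is $\ge 0$ since the indexing set is nonempty. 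Replacing $M_n$ by the running maximum $\bar M_n\coloneq\max_{1\le k\le n}M_k$ yields a finite, nondecreasing sequence with $\bar M_1=M_1\ge 0$, and combining the two cases (using $\bar M_n\ge M_n\ge 0$) gives, for every $n\in\N$, every $x\in B_n$, $u\in B_{n,\U}$ and every $t\in I$,
\[
\norm{y(t,x,u)}_Y\le \bar M_n+1+\gamma(\norm{u}_\U).
\]

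Next I would make this estimate uniform in $(x,u)$. For arbitrary $x\in X$, $u\in\U$ take the smallest admissible index $n(x,u)\coloneq\lfloor\max\{\norm{x}_X,\norm{u}_\U\}\rfloor+1\in\N$, which satisfies $n(x,u)>\max\{\norm{x}_X,\norm{u}_\U\}$, so the previous display applies with $n=n(x,u)$ and yields $\norm{y(t,x,u)}_Y\le \bar M_{n(x,u)}+1+\gamma(\norm{u}_\U)$ for all $t\in I$. Introducing the nondecreasing step function $g(p)\coloneq\bar M_{\lfloor p\rfloor+1}$ on $\R^+_0$, one has $g(0)=\bar M_1$ and $\bar M_{n(x,u)}=g(\max\{\norm{x}_X,\norm{u}_\U\})\le g(\norm{x}_X+\norm{u}_\U)$.

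It remains to dominate $g$ by comparison functions, and this is the delicate step. The point is that $p\mapsto g(p)-g(0)$ \emph{vanishes on} $[0,1)$ — which is exactly why I chose the minimal admissible index $n(x,u)$ — and is nondecreasing; hence one can pick $\bar\sigma\in\KK_\infty$ with $\bar\sigma(n)\ge \bar M_{n+1}-\bar M_1$ for all $n\in\N_0$, e.g. by linearly interpolating the points $\bigl(n,\ \bar M_{n+1}-\bar M_1+n\bigr)$, where the term "$+n$" enforces strict monotonicity and unboundedness. Then $g(p)\le \bar\sigma(p)+\bar M_1$ for all $p\ge 0$, and by $\bar\sigma(a+b)\le \bar\sigma(2a)+\bar\sigma(2b)$ we get, for all $x,u$ and all $t\in I$,
\[
\norm{y(t,x,u)}_Y\le \bar\sigma(2\norm{x}_X)+\bigl(\bar\sigma(2\norm{u}_\U)+\gamma(\norm{u}_\U)\bigr)+(\bar M_1+1).
\]
Setting $\sigma(r)\coloneq\bar\sigma(2r)$, $\widetilde\gamma(r)\coloneq\bar\sigma(2r)+\gamma(r)$ (both in $\KK_\infty$) and $c\coloneq\bar M_1+1>0$ gives OUGB.

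The main obstacle, as indicated above, is precisely the comparison-function majorization: a careless choice of the index $n(x,u)$ produces a reachability bound $\bar M_{n(x,u)}$ that jumps away from its value at the origin as soon as $(x,u)\neq(0,0)$, and such a jump cannot be absorbed into a single $\KK_\infty$ term plus a constant. Choosing the minimal admissible index (so that $g-g(0)$ genuinely vanishes near the origin) and passing to running maxima (to restore monotonicity) are what make the majorization possible; everything else is routine $\KK_\infty$-bookkeeping, parallel to the ISS/UGS arguments behind the full-state case in \cite{MiW18b}.
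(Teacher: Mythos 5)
Your proposal is correct, and it follows the same basic decomposition as the paper's proof: split the time axis at the uniform OUAG transition time (with $\eps=1$ and $r=s$), bound the transient piece by BORS, and then merge the two bounds into an OUGB estimate. The difference is purely in how the majorization is executed: the paper works with a continuum of radii, regularizes the OUAG time $r\mapsto\tau(r)$ into a continuous increasing function by an averaging trick, and invokes (by analogy with a cited lemma) the existence of a continuous, componentwise increasing majorant $\mu$ for the BORS bound, from which it builds $\widetilde\sigma(r)=\mu(r,r,\tau(r))$ and subtracts $\widetilde\sigma(0)$; you instead discretize over integer radii, take running maxima of the raw BORS suprema $M_n$, and dominate the resulting nondecreasing step function by an explicit piecewise-linear $\KK_\infty$ interpolant. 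Your route is more self-contained (no appeal to the continuous-majorant lemma, no continuity of $\tau$ needed), at the cost of slightly cruder constants; the paper's route produces the comparison functions more directly once the majorant $\mu$ is granted. One stylistic remark: the step you flag as "delicate" is less critical than you suggest — even if the step function jumped immediately after the origin, one could absorb that jump into the additive constant $c$ allowed by OUGB, so the choice of the minimal admissible index is a convenience rather than a necessity.
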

\begin{proof}
    The proof is adapted from \cite[Prop. 10]{MiW18b}. 
    First, we define a parameter $\tau$ such that for sufficiently small $x, u$ the output remains bounded for $t \leq \tau$. Let $\widetilde\gamma$ be the OUAG gain.
    Let $r = s > 0$ and $\eps = 1$. By OUAG, there exists \amc{$\tau(r) = \tau(\eps,r,s)$} such that for all $x \in B_r$ and $u \in B_{r,\U}$
    \begin{align}\label{ineq:boundednessLimit}
		\norm{y(t,x,u)}_Y \leq 1 + \widetilde \gamma\!\paren{\norm{u}_{\U}}\!, \qquad t \geq \tau(r)
	\end{align}
    holds. We can choose $\tau$ to be increasing and continuous. If $\tau$ is increasing but not continuous, it is locally Riemann-integrable so we can replace it by the continuous \amc{and still increasing \cite[Prop. 2.54]{Mir23}} function $\overline \tau = \overline \tau(r) \coloneq \frac{1}{r} \int_r^{2r}\tau(s) \diff s \geq \tau(r)$, $r > 0$. 
    % \pb{Since $\tau$ is increasing, i.e., $\tau(s_1) > \tau(s_2)$ for all $s_1 > s_2$, $\overline \tau$ is increasing, too, as substituting $r_1 = \tfrac{r_1}{r_2} r_2$ and monotonicity of the integral such that for all $r_1 > r_2$ results in
    % \begin{align*}
    %     \overline\tau(r_1) 
    %     %&= \tfrac{1}{r_1} \int_{r_1}^{2r_1}\tau(s) \diff s 
    %     %= \tfrac{1}{r_1} \int_{r_2}^{2r_2}\tau\!\paren{\tfrac{r_1}{r_2}s} \cdot \tfrac{r_1}{r_2}  \ \diff s \\
    %     &= \tfrac{1}{r_2} \int_{r_2}^{2r_2} \tau\!\paren{\tfrac{r_1}{r_2}s} \ \diff s
    %     < \tfrac{1}{r_2} \int_{r_2}^{2r_2} \tau\!\paren{s} \ \diff s
    %     = \overline \tau(r_2).
    % \end{align*}
    % }

    By BORS, there exists a continuous and component-wise increasing function $\mu\colon (\R_+)^3 \to \R_+$ that provides the bound
    \begin{align}\label{ineq:BORSfunction}
        \norm{y(t,x,u)}_Y \leq\mu\!\paren{\norm{x}_X\!, \norm{u}_\U\!,t}\!.
    \end{align}
    Existence of such $\mu$ can be proven analogously to \cite[Lem.  2.12]{Mironchenko2018} and is omitted here.
    Then, from \eqref{ineq:BORSfunction}, we have
    \begin{align}\label{impl:localOutputBoundedness}
        x \in B_r,\, u \in B_{r, \U},\, t \leq \tau(r) \!\implies \!\norm{y(t,x,u)}_Y \leq \widetilde \sigma(r),
    \end{align}
    where we define the continuous and increasing function $\widetilde \sigma\colon \R_+ \to \R_+$, $r \mapsto \mu\!\paren{r, r, \tau(r)}$.

    % We now use a bootstrapping argument to show that $\norm{y(t,x,u)}_Y \leq \widetilde \sigma(r)$ holds for all $ t \in I$: We assume the converse. Let there exist $t \in I$, $x \in B_r$, $u \in \overline{B_{\gamma^{-1}\!\paren{\frac{r}{4}}, \U}}$ such that $\norm{y(t,x,u)}_Y > \widetilde \sigma(r)$. Furthermore, we define
    % \begin{align*}
    %     t_m \coloneq \sup\!\braces{s \in [0,t]\,\middle|\, \norm{\phi(t,x,u)}_Y \leq r} \geq 0.
    % \end{align*}
    % By the cocycle property \ref{cond:cocycleProperty} it holds that
    % \begin{align*}
    %     y(t,x,u) = y\!\paren{t - t_m, \phi(t_m,x,u),u(\ph + t_m)}.
    % \end{align*}
    % Case 1: Assume that $t-t_m \leq \tau(r)$ holds. Then, from $\norm{\phi(t_m,x,u)}_X \leq r$ and \eqref{impl:outputBoundedness} it follows that $\norm{y(s,x,u)}_Y \leq \widetilde \sigma(s)$ for all $s \in [t_m,t]$.

    % Case 2: Assume $t-t_m > \tau(r)$. Then, it follows from \eqref{ineq:boundednessLimit} the existence of some $t^* < \tau(r)$ such that
    % \begin{align*}
    %     \norm{y(t^* + t_m,x,u)}_Y = \norm{y(t^*,\phi(t_m,x,u), u(\ph + t_m)}_Y \leq \frac{3}{4}r
    % \end{align*}
    % The last inequality is a contradiction to the definition of $t_m$.
    % Therefore, 
    % \begin{align}\label{impl:globalOutputBoundedness}
    %     x \in B_r, u \in \overline{B_{\gamma^{-1}\!\paren{\frac{r}{4}}, \U}}, t \in I \implies \norm{y(t,x,u)}_Y \leq \widetilde \sigma(r),
    % \end{align}
    Define $\sigma(s) \coloneq \widetilde \sigma(s) - \widetilde \sigma(0)$, $s\ge 0$. Clearly, $\sigma \in \KK$. Applying \eqref{impl:localOutputBoundedness} with $r \coloneq \max\!\braces{\norm{x}_X\!, \norm{u}_{\U}}$ for $(x,u) \in X \times \U$, we obtain
    \begin{align}
        \norm{y(t,x,u)}_Y 
        &\leq \sigma\!\paren{\max\!\braces{\norm{x}_X, \norm{u}_{\U}}} + \widetilde \sigma(0) \nonumber\\
        &\leq \sigma\!\paren{\norm{x}_X} + \sigma\!\paren{\norm{u}_{\U}} + \widetilde \sigma(0) \label{ineq:outputBoundednessSmallTimes}
    \end{align}
    for all $x \in X$, $u \in \U$, $t \in I \cap [0,\tau(r)]$. Then, we can define $c  \coloneq \max\{\widetilde \sigma(0),1\} > 0$, $\gamma(r) = \max\!\braces{ \widetilde \gamma(r), \sigma(r)}$ and obtain from \eqref{ineq:boundednessLimit} and \eqref{ineq:outputBoundednessSmallTimes} 
    \begin{align*}
        \norm{y(t,x,u)}_Y \leq  \sigma\!\paren{\norm{x}_X} + \gamma\!\paren{\norm{u}_{\U}} + c.
    \end{align*}
    This proves OUGB of $\Sigma$.
\end{proof}
% With Lemma \ref{lem:OUAGandBORStoOUGB} at hand, we can show the following.

% The next result provides multiple insights. 1. It shows equivalence of OUAG and OGUAG given that $\Sigma$ is BORS. 2. It characterizes OCAG and thereby generalizes \cite[Prop. III.4]{Mir19a} to systems with outputs. 3. It provides sufficient condition for IOpS.

% \pbc{This lemma is already needed in Thm. \ref{thm:IOSequivalences}. How to proceed? Split the proof or shift the whole lemma to Section II?}

% \mir{Shift it before.}
The next result characterizes OCAG and thereby generalizes \cite[Prop. III.4]{Mir19a} to systems with outputs. Even more, it shows the equivalence of OUAG and OGUAG given that $\Sigma$ is BORS and provides sufficient conditions for IOpS.

\amc{For the case of full-state output, in \cite{Mir19a} the ISpS has been characterized by UAG with respect to certain bounded sets. These characterizations are outside of the scope of this paper, and are a nice direction for future research.}

\begin{proposition}
\label{prop:OCAGequivalences}
    Let $\Sigma = (I, X, \U, \phi, Y,h)$ be a forward complete control system with outputs. %Let $\Sigma$ be OUAG and BORS. %Then, $\Sigma$ is OGUAG. 
    Then, the following are equivalent:
    \begin{enumerate}
        \item \label{cond:OCAG_1} $\Sigma$ is OUAG and BORS.
        \item \label{cond:OCAG_2} $\Sigma$ is OGUAG and OUGB.
        \item \label{cond:OCAG_3} $\Sigma$ is  OCAG.
        % For $\Sigma$, there exist $\beta \in \KK\LL$, $\gamma \in \KK_\infty$, $c > 0$ such that for all $x \in X$, $u \in \U$, it holds that
        % \begin{align}\label{ineq:OCAG}
        %     \norm{y(t,x,u)}_Y \leq \beta\!\paren{\norm{x}_X + c,t} + \gamma\!\paren{\norm{u}_\U}\quad \forall t \in I.
        % \end{align}
    \end{enumerate}
    Any of these properties implies that $\Sigma$ is IOpS.
\end{proposition}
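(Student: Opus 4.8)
The plan is to close the cycle \ref{cond:OCAG_3}$\Rightarrow$\ref{cond:OCAG_2}$\Rightarrow$\ref{cond:OCAG_1}$\Rightarrow$\ref{cond:OCAG_3} and then read off IOpS from OCAG. Two of the three implications cost nothing: \ref{cond:OCAG_3}$\Rightarrow$\ref{cond:OCAG_2} is the pair of elementary implications OCAG$\Rightarrow$OGUAG and OCAG$\Rightarrow$OUGB, and \ref{cond:OCAG_2}$\Rightarrow$\ref{cond:OCAG_1} is OGUAG$\Rightarrow$OUAG together with OUGB$\Rightarrow$BORS, all four being contained in Lemma~\ref{lem:IOStoBORSandOCEPandOGUAGandOGULIM}. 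Hence the work is concentrated in \ref{cond:OCAG_1}$\Rightarrow$\ref{cond:OCAG_3}, which I would split into OUAG$\,\wedge\,$BORS$\Rightarrow$OUGB, then OUAG$\,\wedge\,$OUGB$\Rightarrow$OGUAG, and finally OGUAG$\,\wedge\,$OUGB$\Rightarrow$OCAG.

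The first of these is exactly Lemma~\ref{lem:OUAGandBORStoOUGB}. For the second, let $\gamma_a\in\KK_\infty$ be the OUAG gain and let $\sigma_b,\gamma_b\in\KK_\infty$, $c_b>0$ be the OUGB data, and define $\gamma_g\in\KK_\infty$ by $\gamma_g(v)\coloneq\gamma_a(v)+\gamma_b(v)+v$. Fix $\eps,r>0$, let $R\coloneq\sigma_b(r)+c_b$, and take $x\in B_r$. If $\norm{u}_\U\geq R$, then by OUGB and $\norm{x}_X<r$ we obtain, for every $t\in I$, that $\norm{y(t,x,u)}_Y\leq\sigma_b(r)+c_b+\gamma_b(\norm{u}_\U)=R+\gamma_b(\norm{u}_\U)\leq\norm{u}_\U+\gamma_b(\norm{u}_\U)\leq\gamma_g(\norm{u}_\U)$; if instead $\norm{u}_\U<R$, then OUAG applied with $s\coloneq R$ gives $\norm{y(t,x,u)}_Y\leq\eps+\gamma_a(\norm{u}_\U)\leq\eps+\gamma_g(\norm{u}_\U)$ for all $t\geq\tau_{\mathrm{OUAG}}(\eps,r,R)$. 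Thus $\tau_{\mathrm{OGUAG}}(\eps,r)\coloneq\tau_{\mathrm{OUAG}}(\eps,r,R)$ witnesses OGUAG with the $(\eps,r)$-independent gain $\gamma_g$.

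For OGUAG$\,\wedge\,$OUGB$\Rightarrow$OCAG I would argue as in the proof of \cite[Prop. III.4]{Mir19a}. After the regularization that makes the OGUAG time $\tau(\eps,r)$ non-increasing in $\eps$ and non-decreasing in $r$ (as done for $\overline\tau$ in the proof of Lemma~\ref{lem:OUAGandBORStoOUGB}), set $\Psi(r,t)\coloneq\inf\{\eps>0:\tau(\eps,r)\leq t\}$, which is non-decreasing in $r$, non-increasing in $t$, and satisfies $\Psi(r,t)\to 0$ as $t\to\infty$; applying the OGUAG bound over all radii $r>\norm{x}_X$ (and replacing $\Psi(r,t)$ by $\inf_{r'>r}\Psi(r',t)$, which keeps the same monotonicity) yields $\norm{y(t,x,u)}_Y\leq\Psi(\norm{x}_X,t)+\gamma_1(\norm{u}_\U)$ whenever $\Psi(\norm{x}_X,t)<\infty$. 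Combining this with the OUGB bound $\norm{y(t,x,u)}_Y\leq\sigma(\norm{x}_X)+\gamma_2(\norm{u}_\U)+c_0$ and using $\min\{a+c,b+c\}=\min\{a,b\}+c$, we get, with $\gamma\coloneq\gamma_1+\gamma_2\in\KK_\infty$ and $\widehat\Psi(r,t)\coloneq\min\{\Psi(r,t),\sigma(r)+c_0\}$,
\begin{align*}
    \norm{y(t,x,u)}_Y\leq\widehat\Psi(\norm{x}_X,t)+\gamma(\norm{u}_\U),\qquad t\in I.
\end{align*}
The function $\widehat\Psi$ is non-decreasing in its first argument, non-increasing in its second with limit $0$, and bounded by $\sigma(r)+c_0$, so a $\KK\LL$-domination argument produces $\beta\in\KK\LL$ and $c\geq0$ with $\widehat\Psi(r,t)\leq\beta(r+c,t)$ (the shift by $c$ absorbing the fact that $\widehat\Psi(0,\cdot)$ need not vanish), which is precisely OCAG. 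Finally, OCAG implies IOpS, since $\beta(\norm{x}_X+c,t)\leq\beta(2\norm{x}_X,t)+\beta(2c,0)$ gives $\norm{y(t,x,u)}_Y\leq\widetilde\beta(\norm{x}_X,t)+\gamma(\norm{u}_\U)+\widetilde c$ with $\widetilde\beta(r,t)\coloneq\beta(2r,t)\in\KK\LL$ and $\widetilde c\coloneq\beta(2c,0)\geq0$.

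The step I expect to be the main obstacle is the last one: turning the merely monotone function $\widehat\Psi$ into a genuine $\KK\LL$ majorant $\beta(r+c,t)$ with the correct inner shift, and verifying that the construction goes through simultaneously for the continuous-time case $I=\R^+_0$ and the discrete-time case $I=\N_0$ (where $\tau$ is $\N_0$-valued). The other steps are routine manipulations with comparison functions.
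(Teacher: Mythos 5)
Your cycle is sound and, up to reordering, it is the same decomposition the paper uses: the cheap implications \ref{cond:OCAG_3}$\Rightarrow$\ref{cond:OCAG_2}$\Rightarrow$\ref{cond:OCAG_1} via Lemma~\ref{lem:IOStoBORSandOCEPandOGUAGandOGULIM}, OUAG $\land$ BORS $\Rightarrow$ OUGB via Lemma~\ref{lem:OUAGandBORStoOUGB}, and OUAG $\land$ OUGB $\Rightarrow$ OGUAG by a case distinction on $\norm{u}_\U$ against an $r$-dependent threshold (you use $R=\sigma_b(r)+c_b$, the paper uses $\max\{r,1\}$ and absorbs $c$ into $c\norm{u}_\U$; both work, and your gain $\gamma_a+\gamma_b+\mathrm{id}$ is fine). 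The only genuine divergence is the step OGUAG $\land$ OUGB $\Rightarrow$ OCAG. You define $\Psi(r,t)=\inf\{\eps>0: \tau(\eps,r)\le t\}$, truncate by the OUGB bound, and then appeal to an abstract $\KK\LL$-domination lemma with inner shift $r\mapsto r+c$; the paper instead constructs $\beta$ explicitly: fix the geometric sequence $\eps_n(r)=e^{-n}(\sigma(r)+r)$ with times $\tau_n=\tau(\eps_n(r),r)$ (and $\tau_0=0$ thanks to OUGB, after replacing $r$ by $r+c$ to absorb the offset), and set $\beta(r,t)=\exp\!\para{-(n-1)-\tfrac{t-\tau_n}{\tau_{n+1}-\tau_n}}\eps_0(r)$ on $[\tau_n,\tau_{n+1})$. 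The step you flag as the ``main obstacle'' is thus exactly what the paper discharges by this explicit construction, which needs no separate sandwich lemma, avoids the regularization of $\tau$ and the $\inf_{r'>r}$ right-limit device, and works verbatim for $I=\N_0$ as well since all $\tau_n\in I$; if you keep your route, you should either cite a concrete domination result (the argument of \cite[Prop. III.4]{Mir19a} is the natural one, and is what the paper generalizes) or carry out the construction, including continuity and strict monotonicity of $\beta$ in both arguments. Two cosmetic points: your identity $\min\{a+c,b+c\}=\min\{a,b\}+c$ does not apply literally since the two bounds carry different gains $\gamma_1(\norm{u}_\U)$ and $\gamma_2(\norm{u}_\U)$, but your choice $\gamma=\gamma_1+\gamma_2$ repairs this immediately; and your IOpS deduction coincides with the paper's.
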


% \mir{I still wonder why we can't jump directly from 1) to 3)?
% In Lemma~\ref{lem:OUAGandOUGStoIOS} we could go from OUAG + OUGS to IOS directly, without getting OGUAG first.
% }

% \pbc{Thank you for shortening and streamlining the proof. It seems much clearer now. 
% \\ 
% In fact, from the second displaystyle equation, one directly gets $\norm{y} \leq \gamma_2(\norm u)$. Then, most of the argumentation in the paragraph "2)$\implies$3)" could be shifted after the first displaystyle equation. Then it is shown directly. However, the arguments will be almost the same, just the order is different, as the argument with the $\eps_n$ is still needed. Do you think it is worth it?}

% \mir{I let you decide.}

\begin{proof}
    \pb{\begin{figure}[htb]
    \centering
    \begin{tikzpicture}%[transform canvas={scale=0.7}]
        % middle box nodes
        \node (OUAG) at (0,0) {OUAG $\land$ BORS};
        \node (OGUAG) at (0,1) {OGUAG $\land$ OUGB};
        \node (OCAG) at (3,0.5) {OCAG};
        \node (IOpS) at (5,0.5) {IOpS};
        
        % Connections
        \path
        (OUAG) edge[thick,double,double equal sign distance,-{Implies[]}] node[anchor=south]{} (OGUAG)
        (OGUAG) edge[thick,double,double equal sign distance,-{Implies[]}] node[right=.2cm]{} (OCAG)
        (OCAG) edge[thick,double,double equal sign distance,-{Implies[]}] node[anchor=north]{} (OUAG)
        (OCAG) edge[thick,double,double equal sign distance,-{Implies[]}] node[left=.2cm]{} (IOpS);
    \end{tikzpicture}
    \caption{Diagram of implications for the proof of Theorem \ref{prop:OCAGequivalences}.}
    \label{fig:implicationDiagramOCAG}
\end{figure}}
    \pb{
    We follow the approach depicted in Fig. \ref{fig:implicationDiagramOCAG}.
    }
   \ref{cond:OCAG_1}$\implies$\ref{cond:OCAG_2}: Let $\Sigma$ be OUAG with functions $\gamma, \tau$ as given in Definition \ref{def:OGUAG}.
    Let $\eps,r > 0$.     
    We define $\tau_1 = \tau_1(\eps,r) \coloneq \tau(\eps,r,\max\{r, 1\})$. By OUAG, it holds that
    \begin{align*} %\label{ineq:OGUAG}
        \norm{y(t,x,u)}_Y &\leq \eps + \gamma\!\paren{\norm{u}_{\U}}, \\
        &\qquad x \in B_r, \, \|u\|_{\U} \leq \max\{r, 1\}, \, t \in I\colon t \geq \tau_1.
    \end{align*}

    On the other hand, by Lemma \ref{lem:OUAGandBORStoOUGB}, $\Sigma$ is OUGB with parameters $\sigma$,  $\gamma$ and $c$. W.l.o.g., $\gamma$ is the same as the one from OUAG. Otherwise, we choose the maximum of the two. 

    Now, for all $x \in B_r, \, \|u\|_{\U} \geq \max\{r, 1\}$, we have that $\|u\|_{\U} \geq \max\{\|x\|_X, 1\}$, and
    \begin{align*}
        \norm{y(t,x,u)}_Y &\leq \sigma\!\paren{\norm{x}_X} + \gamma\!\paren{\norm{u}_{\U}} + c \\
        &\leq \eps +(\sigma + \gamma)\!\paren{\norm{u}_{\U}} + c \norm{u}_{\U}, \qquad  t \in I.
    \end{align*}
    
    As a consequence, for all $\eps,r > 0$ and $\tau_1 = \tau_1(\eps, r) \in I$ it holds for all $x \in B_r$, $u\in \U$, and $t \in I\colon t \geq \tau_1$ that
    \begin{align*} %\label{ineq:OGUAG}
        \norm{y(t,x,u)}_Y &\leq \eps + (\sigma + \gamma)\!\paren{\norm{u}_{\U}} + c \norm{u}_{\U}
    \end{align*}
    i.e., $\Sigma$ is OGUAG. 

    \ref{cond:OCAG_2}$\implies$\ref{cond:OCAG_3}: Let $\sigma, \gamma \in \KK_\infty$, $c > 0$ be the parameters in the definition of OUGB. W.l.o.g., let $\gamma$ also be the \amc{OGUAG gain} (otherwise define $\gamma$ as the maximum of the OUGB and the OUAG gain). For all $r \geq 0$, we define the sequence $(\eps_n(r))_{n \in \N_0}$ where $\eps_0(r) \coloneq \sigma(r) + r$ and $\eps_n(r) \coloneq e^{-n}\eps_0(r)$ for $n \in \N$. By OGUAG, for every $r>0$ and any $n\in\N_0$ there is $\tau_n \coloneq \tau(\eps_n(r),r) \in I$ such that
    \begin{align}\label{ineq:boundOUAGtoOGUAG}
        \norm{y(t,x,u)}_Y\!
        &\leq \eps_n(r) + \gamma\!\paren{\norm{u}_{\U}} 
        \leq \eps_n(r + c) + \gamma\!\paren{\norm{u}_{\U}}
    \end{align}
    holds true for all $x \in B_r$, $u \in \U$ and $t \in I\colon t \geq \tau_n$. We can set $\tau_0 = 0$, as by OUGB it holds that
    \begin{align*}
        \norm{y(t,x,u)}_Y
        &\leq \sigma\!\paren{\norm{x}_X} + \gamma\!\paren{\norm{u}_{\U}} + c \\
        &\leq \sigma\!\paren{\norm{x}_X + c} + \gamma\!\paren{\norm{u}_{\U}} + \norm{x}_X + c \\
        &\leq \eps_0(r + c) + \gamma\!\paren{\norm{u}_{\U}}
    \end{align*}
    for all $x \in B_r$, $u \in \U$ and $t \in I\colon t \geq \tau_0$.

    For $r,t \geq 0$, we define the $\KK\LL$-function $\beta$ by
    \begin{align*}
        \beta(r,t) = \exp\!\paren{-(n - 1) - \tfrac{t - \tau_n}{\tau_{n + 1} - \tau_n}}\eps_0(r),
    \end{align*}
    which is piecewise defined for $t \in [\tau_n,\tau_{n + 1})$, $n \in \N_0$.
        
    From this construction, we obtain
    \begin{align*}
        \beta(r,t) \geq e^{-n}\eps_0(r) = \eps_n(r), \qquad t \in [\tau_n,\tau_{n + 1}),\, n \in \N_0.
    \end{align*}
    This especially holds for $r = \norm{x}_X + c$ and by \eqref{ineq:boundOUAGtoOGUAG}, we obtain
    \begin{align*}
        \norm{y(t,x,u)}_Y 
        &\leq \eps_n(\norm{x}_X + c) + \gamma\!\paren{\norm{u}_\U}\\
        &\leq \beta\!\paren{\norm{x}_X + c,t} + \gamma\!\paren{\norm{u}_\U}\!,
    \end{align*}
    for $t \in [\tau_n,\tau_{n + 1})$, $n \in \N_0$,
    % Now, with a similar choice of $\beta$ (with $\widetilde \sigma \coloneq \sigma$ and $\tau_n\coloneq\tau_n$) and analogous arguments as in the proof of Proposition \ref{prop:OLIOSequivalences} \pbtodo{The argument should stand here and be referenced in Prop. III.8}, it follows that
    % \begin{align*}
    %     \norm{y(t,x,u)}_Y \leq \beta\!\paren{\norm{x}_X + c,t} + \gamma\!\paren{\norm{u}_\U}\!, \qquad t \in I,
    % \end{align*}
    i.e., OCAG of $\Sigma$, as desired.
    
    \ref{cond:OCAG_3}$\implies$\ref{cond:OCAG_1}: 
    % Let $\Sigma$ be OCAG. We show OUAG: For all $\eps,r > 0$, the map $\tau$ can be chosen by
    % \begin{align*}
    %     \tau(\eps,r) = \min\!\braces{t \in I\,\middle|\, \beta(r + c,t) \leq \eps}.
    % \end{align*}
    % As $\beta$ is strictly decreasing to zero in $t$, this minimum exists. 
    % The implication OCAG$\implies$OUAG follows from Lemma \ref{lem:IOStoBORSandOCEPandOGUAGandOGULIM} and
    % BORS follows immediately from OCAG. This concludes the proof.
    The implication follows from Lemma \ref{lem:IOStoBORSandOCEPandOGUAGandOGULIM}.
    
    \ref{cond:OCAG_3}$\implies$IOpS: OCAG implies for all $x \in X$, $u \in \U$ and $t \in I$ that
\begin{align*}
    \norm{y(t,x,u)}_Y 
    &\leq \beta\!\paren{\norm{x}_X + c,t} + \gamma\!\paren{\norm{u}_\U} \\
    &\leq \beta\!\paren{2\norm{x}_X,t} +\beta\!\paren{2c,t} + \gamma\!\paren{\norm{u}_\U} \\
    &\leq \beta\!\paren{2\norm{x}_X,t} + \gamma\!\paren{\norm{u}_\U} + \widetilde c
\end{align*}
 is satisfied for $\widetilde c \coloneq \beta\!\paren{2c,0}$. This concludes the proof.
\end{proof}

% \mir{In the next lemmas, we use the proof strategy from MiW18b paper. 
% But in view of the new lemma 
% \begin{center}
% OUAG and BORS $\Rightarrow$ OCAG     
% \end{center}
% which we prove anyway, 
% instead of the next 2 lemmas it would be enough to prove that
% \begin{center}
%     OCAG + ULS $\Rightarrow$ IOS.
% \end{center}
% Will that work?
% If yes, it would be a more optimal proof strategy.
% }

% \pbc{I think it will work, yes. And probably it will be much shorter. 
% \\ 
% The procedure will be OCAG  + OULS $\implies$ OCAG + OUGS $\implies$ IOS where $\sigma_\text{OUGS} \geq \max\{\sigma(s),\beta(1+\frac{c}{r})s,0)\}$ and $\beta_\text{IOS}(s,t) \geq \min \{\sigma_\text{OUGS}, \beta(s + c,t)\}$ with appropriate properties.
% \\
% However, we will lose 4) in Thm. \ref{thm:IOSequivalences}. I would like to keep 1)--4) and give OCAG  + OULS as 5). To fix this, I propose 5) $\implies$4)$\implies$3). \\
% As we now use OCAG multiple times, it might be good to formally define it.}

% \mir{Sounds good.}
The next result is a superposition theorem for OUGS.
\begin{lemma}\label{lem:OUGBandOULStoOUGS}
    Let $\Sigma = (I, X, \U, \phi, Y,h)$ be a forward complete control system with outputs. Then $\Sigma$ is OUGS if and only if $\Sigma$ is OUGB and OULS.
\end{lemma}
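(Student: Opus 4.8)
The statement is an ``if and only if''. The direction OUGS $\implies$ OUGB is immediate: from the OUGS estimate $\norm{y(t,x,u)}_Y \leq \sigma(\norm x_X) + \gamma(\norm u_\U)$ one obtains the OUGB estimate with the very same $\sigma,\gamma$ and any constant $c>0$. Likewise OUGS $\implies$ OULS trivially by restricting attention to $x \in B_r$, $u \in B_{r,\U}$ for any fixed $r>0$. So the whole content is in the converse: assuming $\Sigma$ is OUGB (with parameters $\sigma_2,\gamma_2 \in \KK_\infty$ and $c>0$) and OULS (with radius $r_0>0$ and parameters $\sigma_1,\gamma_1 \in \KK_\infty$), we must produce $\sigma,\gamma \in \KK_\infty$ such that $\norm{y(t,x,u)}_Y \leq \sigma(\norm x_X) + \gamma(\norm u_\U)$ for all $x \in X$, $u \in \U$, $t \in I$.

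\textbf{Converse direction.} The idea is the standard ``patching'' argument: the OULS bound is sharp near the origin but only valid there, while the OUGB bound is valid everywhere but carries an additive constant $c$ that we cannot tolerate in a global stability estimate; away from the origin, however, $\norm x_X$ or $\norm u_\U$ is bounded below, so the constant $c$ can be absorbed into a $\KK_\infty$-term evaluated at $\norm x_X$ or $\norm u_\U$. Concretely, set $a \coloneq r_0/2$ (or any positive number $\le r_0$). First I would handle the region where $\max\{\norm x_X, \norm u_\U\} \le a$: there OULS applies and gives $\norm{y(t,x,u)}_Y \le \sigma_1(\norm x_X) + \gamma_1(\norm u_\U)$. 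Next, in the region $\max\{\norm x_X, \norm u_\U\} > a$, I use OUGB: $\norm{y(t,x,u)}_Y \le \sigma_2(\norm x_X) + \gamma_2(\norm u_\U) + c$. Here at least one of $\norm x_X > a$ or $\norm u_\U > a$ holds; in either case $c = \frac{c}{a}\cdot a < \frac{c}{a}\max\{\norm x_X,\norm u_\U\} \le \frac{c}{a}(\norm x_X + \norm u_\U)$, so $c$ is dominated by the linear (hence $\KK_\infty$) function $r \mapsto \tfrac{c}{a} r$ evaluated at $\norm x_X$ plus the same at $\norm u_\U$. Thus on this region $\norm{y(t,x,u)}_Y \le \bigl(\sigma_2(\norm x_X) + \tfrac{c}{a}\norm x_X\bigr) + \bigl(\gamma_2(\norm u_\U) + \tfrac{c}{a}\norm u_\U\bigr)$.

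\textbf{Combining.} Finally I would define $\sigma(r) \coloneq \sigma_1(r) + \sigma_2(r) + \tfrac{c}{a} r$ and $\gamma(r) \coloneq \gamma_1(r) + \gamma_2(r) + \tfrac{c}{a} r$; both lie in $\KK_\infty$ as finite sums of $\KK_\infty$-functions. On the first region the bound is $\le \sigma_1(\norm x_X) + \gamma_1(\norm u_\U) \le \sigma(\norm x_X) + \gamma(\norm u_\U)$, and on the second region the bound is $\le \sigma(\norm x_X) + \gamma(\norm u_\U)$ as well, so in all cases $\norm{y(t,x,u)}_Y \le \sigma(\norm x_X) + \gamma(\norm u_\U)$, which is exactly OUGS. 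A small remark: one should double-check that the two regions together cover $X \times \U$ — they do, since either $\max\{\norm x_X,\norm u_\U\}\le a$ or $>a$. There is essentially no obstacle here; the only point requiring mild care is the bookkeeping that ensures the additive constant $c$ is genuinely absorbed (this is where the choice $a \le r_0$ matters, so OULS is applicable on the small region), and that the case split is exhaustive; this mirrors the proof of the analogous full-state-output result (UGB $\wedge$ ULS $\iff$ UGS) in \cite{MiW18b}.
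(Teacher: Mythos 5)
Your proof is correct and takes essentially the same route as the paper's, which adapts \cite[Lem. I.2]{SoW96}: use the OULS estimate on the region where both $\norm{x}_X$ and $\norm{u}_\U$ are small and the OUGB estimate elsewhere, absorbing the additive constant $c$ into $\KK_\infty$ gains that are bounded below away from the origin. The only cosmetic differences are that you split into two regions and absorb $c$ via the explicit linear term $\tfrac{c}{a}r$, whereas the paper uses three cases and implicitly chosen piecewise majorants; just note that your parenthetical alternative ``any $a \le r_0$'' should read $a < r_0$ since OULS is stated on open balls, though your main choice $a = r_0/2$ is perfectly fine.
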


\begin{proof}
    By Lemma \ref{lem:IOStoBORSandOCEPandOGUAGandOGULIM}, OUGS implies OUGB and OULS. 
    We show the converse implication by adapting the argument from \cite[Lem. I.2]{SoW96}.
    By OUGB, there exist $\sigma_1, \gamma_1 \in \KK_\infty$, $c > 0$ such that all $x \in X$ and $u \in \U$ satisfy
	\begin{align}\label{ineq:OUGB}
		\norm{y(t,x,u)}_Y \leq \sigma_1\!\paren{\norm{x}_X} + \gamma_1\!\paren{\norm{u}_{\U}} + c, \qquad t \in I.
	\end{align}
   By OULS, there exist $\sigma_2, \gamma_2 \in \KK_\infty$ and $r > 0$ such that all $x \in B_r$ and $u \in B_{r,\U}$ satisfy
	\begin{align}\label{ineq:OL}
		\norm{y(t,x,u)}_Y \leq \sigma_2\!\paren{\norm{x}_X} + \gamma_2\!\paren{\norm{u}_{\U}}\!, \qquad t \in I.
	\end{align}
    We now choose $\sigma, \gamma \in \KK_\infty$ such that
    \begin{align*}
        \sigma(s) &\geq 
        \begin{cases}
            \max\!\braces{\sigma_1(s),\sigma_2(s)}\!, &\text{if } s < r, \\
            \sigma_1(s) + c, &\text{if } s \geq r,
        \end{cases}
        \\
        \gamma(s) &\geq 
        \begin{cases}
            \max\!\braces{\gamma_1(s),\gamma_2(s)}\!, &\text{if } s < r, \\
            \gamma_1(s) + c, &\text{if } s \geq r.
        \end{cases}
    \end{align*}
    Then, we distinguish three cases: 
    
    Case 1: For $\norm{x}_X\!, \norm{u}_{\U} < r$, from \eqref{ineq:OL}, we obtain
    \begin{align*}
        \norm{y(t,x,u)}_Y \leq \sigma\!\paren{\norm{x}_X} + \gamma\!\paren{\norm{u}_{\U}}\!, \qquad t \in I.
    \end{align*}
    
    Case 2: For $\norm{x}_X < r \leq \norm{u}_{\U}$, we make use of \eqref{ineq:OUGB}, i.e.,
    \begin{align*}
        \norm{y(t,x,u)}_Y 
        &\leq \sigma_1\!\paren{\norm{x}_X} + \paren{\gamma_1\!\paren{\norm{u}_{\U}} + c} \\
        &\leq \sigma\!\paren{\norm{x}_X} + \gamma\!\paren{\norm{u}_{\U}}\!, \qquad t \in I.
    \end{align*}
    
    Case 3: For $\norm{x}_X \geq r$, we apply \eqref{ineq:OUGB} to obtain
    \begin{align*}
        \norm{y(t,x,u)}_Y 
        &\leq \paren{\sigma_1\!\paren{\norm{x}_X} + c} + \gamma_1\!\paren{\norm{u}_{\U}} \\
        &\leq \sigma\!\paren{\norm{x}_X} + \gamma\!\paren{\norm{u}_{\U}}\!, \qquad t \in I.
    \end{align*}
    
    % for all $x \in X$ and $u \in \U$ it holds that
    % \begin{align*}
    %     \norm{y(t,x,u)}_Y \leq \sigma\!\paren{\norm{x}_X} + \gamma\!\paren{\norm{u}_{\U}}, \qquad t \in I,
    % \end{align*}
    Therefore, $\Sigma$ is OUGS.
\end{proof}
\begin{lemma}\label{lem:OCAGandOUGStoIOS}
    Let $\Sigma = (I, X, \U, \phi, Y,h)$ be a forward complete control system with outputs. Let $\Sigma$ be OCAG and OUGS. Then, $\Sigma$ is IOS.
\end{lemma}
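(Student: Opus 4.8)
\textbf{Proof plan for Lemma \ref{lem:OCAGandOUGStoIOS}.}

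The plan is to combine the two hypotheses by splitting the state/input space into a ``small-data'' region, where OUGS already yields a $\KK_\infty$-bound without the additive constant, and a ``large-data'' region, where the additive constant $c$ in the OCAG estimate is harmless because it can be absorbed into a $\KK_\infty$-term evaluated at $\norm{x}_X$ or $\norm{u}_\U$. More precisely, OCAG furnishes $\beta \in \KK\LL$, $\gamma_1 \in \KK_\infty$ and $c \ge 0$ with $\norm{y(t,x,u)}_Y \le \beta(\norm{x}_X + c, t) + \gamma_1(\norm{u}_\U)$, while OUGS furnishes $\sigma_2, \gamma_2 \in \KK_\infty$ with $\norm{y(t,x,u)}_Y \le \sigma_2(\norm{x}_X) + \gamma_2(\norm{u}_\U)$ for all $t \in I$. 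The target is a single bound of the form $\beta'(\norm{x}_X, t) + \gamma'(\norm{u}_\U)$ with $\beta' \in \KK\LL$, $\gamma' \in \KK_\infty$.

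First I would handle the case $\norm{x}_X \ge 1$ (any fixed positive threshold works). There, $\beta(\norm{x}_X + c, t) \le \beta(2\norm{x}_X, t)$ once $\norm{x}_X \ge c$; to cover $1 \le \norm{x}_X < c$ as well, note $\beta(\norm{x}_X + c, t) \le \beta(\norm{x}_X + c, 0) \le \beta(2c,0) =: \tilde c$, a constant, and $\tilde c \le \tilde c \cdot \norm{x}_X$ since $\norm{x}_X \ge 1$, so $\beta(\norm{x}_X+c,t) \le \beta(2\norm{x}_X,t) + \tilde c\,\norm{x}_X$; bundling $\tilde c\,\norm{x}_X$ into a $\KK_\infty$ majorant of $r \mapsto \beta(2r,0)$ gives, say, $\norm{y(t,x,u)}_Y \le \hat\beta(\norm{x}_X, t) + \gamma_1(\norm{u}_\U)$ for $\norm{x}_X \ge 1$, with $\hat\beta \in \KK\LL$. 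For the case $\norm{x}_X < 1$ I would further split on the input: if $\norm{u}_\U < 1$ too, apply OUGS directly and pick up $\sigma_2(\norm{x}_X) + \gamma_2(\norm{u}_\U)$, where $\sigma_2(\norm{x}_X) \le \beta^\#(\norm{x}_X, t)$ after replacing the constant $\sigma_2(\norm{x}_X)$ by a $\KK\LL$-function that dominates it at $t=0$ (e.g. $\beta^\#(r,t) := \sigma_2(r) e^{-t}$ or, to stay within what is available, $\max\{\hat\beta(r,t), \sigma_2(r) e^{-t}\}$); if instead $\norm{u}_\U \ge 1 > \norm{x}_X$, use OCAG and bound $\beta(\norm{x}_X + c, t) \le \beta(1 + c, 0) \le \gamma_1'(\norm{u}_\U)$ for a suitable $\gamma_1' \in \KK_\infty$ exploiting $\norm{u}_\U \ge 1$, absorbing everything into the input gain. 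Finally I would take $\gamma' := \gamma_1 + \gamma_2 + \gamma_1'$ and $\beta'$ the maximum of the finitely many $\KK\LL$-functions produced, observing that such a maximum is again of class $\KK\LL$, and check that in each of the three cases $\norm{y(t,x,u)}_Y \le \beta'(\norm{x}_X, t) + \gamma'(\norm{u}_\U)$, which is exactly IOS.

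The main obstacle is purely bookkeeping rather than conceptual: one must make sure the additive constant $c$ from OCAG never survives into the final estimate, which forces the case distinction on whether $\norm{x}_X$ (resp. $\norm{u}_\U$) is below or above a fixed threshold, and one must verify that replacing constants like $\sigma_2(\norm{x}_X)$ by $\KK\LL$-functions and taking maxima of $\KK\LL$-functions stays within class $\KK\LL$ — both standard facts (see the analogous constructions in \cite[Lem. I.2]{SoW96} and in Lemma \ref{lem:OUGBandOULStoOUGS} above). No use of the cocycle property or of the attractivity (the ``$t$-decay'') beyond what is already encoded in $\beta \in \KK\LL$ is needed; the decay in $t$ is inherited directly from the OCAG estimate, and OUGS only serves to kill the constant near the origin.
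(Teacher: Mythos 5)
There is a genuine gap, and it sits exactly in the case your plan treats as routine: $\norm{x}_X<1$ and $\norm{u}_\U<1$. There you rely only on OUGS, and the step ``replace the constant $\sigma_2(\norm{x}_X)$ by a $\KK\LL$-function that dominates it at $t=0$, e.g.\ $\beta^{\#}(r,t):=\sigma_2(r)e^{-t}$'' is not a legitimate move: OUGS yields a bound that is constant in $t$, and an upper bound valid for all $t$ cannot be replaced by one that decays in $t$ unless that decay is actually proved. The inequality $\norm{y(t,x,u)}_Y\le\sigma_2(\norm{x}_X)e^{-t}+\gamma_2(\norm{u}_\U)$ is simply not implied by the hypotheses. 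Your fallback $\max\{\hat\beta(r,t),\sigma_2(r)e^{-t}\}$ does not repair this, because $\hat\beta$ was constructed to dominate $\beta(r+c,t)$ only for $r\ge 1$; for $\norm{x}_X<1$ the only decay available is $\beta(\norm{x}_X+c,t)$, which starts at a level of order $\beta(c,0)$ that is not controlled by $\norm{x}_X$, so for small $\norm{x}_X$ and moderate $t$ none of the terms in your maximum need dominate the true output. Hence the final estimate can fail precisely in the small-data region. (A smaller, fixable slip occurs already in the case $\norm{x}_X\ge 1$: the term $\tilde c\,\norm{x}_X$ has no $t$-decay and cannot be ``bundled'' into a $\KK\LL$-function; the clean route there is $\norm{x}_X+c\le(1+c)\norm{x}_X$, giving $\beta(\norm{x}_X+c,t)\le\beta((1+c)\norm{x}_X,t)$.)

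The missing idea is that near the origin the two hypotheses must be used simultaneously at every time $t$, not by switching from OCAG to a fictitious decaying version of OUGS. The paper's proof does this with a pointwise minimum: with OUGS rates $\sigma,\gamma$ and OCAG data $\beta,\gamma,c$ it sets $\widetilde\beta(r,t):=\min\{(1+e^{-t})\sigma(r),\;\beta(r+c,t)\}$. Since both $(1+e^{-t})\sigma(\norm{x}_X)$ and $\beta(\norm{x}_X+c,t)$ are valid upper bounds for $\norm{y(t,x,u)}_Y-\gamma(\norm{u}_\U)$, so is their minimum; and $\widetilde\beta$ is of class $\KK\LL$ because the OCAG component forces $\widetilde\beta(r,t)\to 0$ as $t\to\infty$, the OUGS component gives $\widetilde\beta(r,t)\le 2\sigma(r)$ (which kills the offset $c$ for small $r$), and the factor $1+e^{-t}$ provides strict decrease in $t$. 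No case distinction is needed. If you want to keep your case-splitting style, the small-data case must likewise be closed by taking the minimum of the two available bounds (or an equivalent argument splitting the time axis), not by postulating an exponential decay that OUGS does not provide.
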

\begin{proof}
    % We proceed in two steps: First, we show the implication OCAG $\land$ OULS$\implies$OUGS. Let $\beta$, $\gamma$, $c$ be the parameters from the definition of OCAG and $\sigma, \gamma,  r$ the parameters of OULS. Here, w.l.o.g. we can assume that $\gamma$ is  the same in both cases. We define $\widetilde \sigma \in \KK_\infty$ by
    % \begin{align*}
    %     \widetilde \sigma(s) \coloneq \max\!\braces{\sigma(s),\beta\!\paren{\paren{1+\frac{c}{r}}s,0}}, \qquad s \in \R_+,
    % \end{align*}
    % i.e. $\widetilde \sigma(s) \geq \sigma(s)$ for $s \in[0,r)$ and  $\widetilde \sigma(s) \geq \beta(s,t)$ for $s \geq r$, $t \in I$. Then, $\Sigma$ is OUGS as for all $x \in X$ and $u \in \U$
    % \begin{align*}
    %     \norm{y(t,x,u)}_Y \leq \widetilde \sigma\!\paren{\norm{x}_X} + \gamma\!\paren{\norm{u}_{\U}}, \qquad t \in I
    % \end{align*}
    % is satisfied.
    % By Proposition \ref{prop:OCAGequivalences} and Lemma \ref{lem:OUGBandOULStoOUGS}, we have the implication
    % \begin{align*}
    %     \text{OCAG} \land \text{OULS} 
    %     \implies \text{OUGB} \land \text{OULS}
    %     \implies \text{OUGS}.
    % \end{align*}
    %
    % It remains to show OCAG $\land$ OUGS$\implies$IOS: 
    Let $\beta$, $\gamma$, $c$ be the parameters from the definition of OCAG and $\sigma, \gamma$ the rates of OUGS. 
    % We define ,
    % \begin{align*}
    %     \widetilde \beta(r,t) \coloneq \min\!\braces{\paren{1+e^{-t}}\sigma(r), \beta(r + c,t)}\!, \qquad r,t \geq 0.
    % \end{align*}
    By OUGS and OCAG, respectively, it holds for all $x \in X$ and all $u \in \U$ that
    \begin{align*}
        \norm{y(t,x,u)}_Y &\leq \widetilde \beta\!\paren{\norm{x}_X\!,t} + \gamma\!\paren{\norm{u}_\U}\!,\quad t \in I, %\label{ineq:OCAGandOUGStoIOS}
    \end{align*}
    where $\widetilde \beta$ is a $\KK\LL$-function defined by
    \begin{align*}
        \widetilde \beta(r,t) \coloneq \min\!\braces{\paren{1+e^{-t}}\sigma(r), \beta(r + c,t)}\!, \quad r,t \geq 0.
    \end{align*}
    Hence, $\Sigma$ is IOS.
\end{proof}

% \todo[inline]{I found the term OLIOS for IOS \& OL in \cite{Sontag1999,Sontag2000,Jiang2008}. Make Lemma a Prop. and state all equivalences from Fig. 1 in it? I.e. ... then the following are equivalent 1. IOS \& OL 2. OUAG \& OL 3. OULIM \& OL}
% \begin{lemma}\label{lem:OULIMandOLtoIOS}
%     Let $\Sigma = (I, X, \U, \phi, Y,h)$ be a forward complete control system with outputs. Let $\sigma$ have a bounded output map, i.e., let there exist $\sigma_1, \gamma_1 \in \KK$ such that 
%     \begin{align}\label{ineq:boundedOutputMap}
%         \norm{h(x,u)}_Y \leq \sigma_1(\norm{x}_X) + \gamma_1(\norm{u}_\U).
%     \end{align}
%     Let $\Sigma$ be OULIM and OL. Then, $\Sigma$ is IOS.
% \end{lemma}

Having completed the proof of the IOS superposition theorem, we proceed to the characterization of the IOS $\wedge$ OL property.
\subsection{IOS $\wedge$ OL superposition theorem}
Opposed to pure IOS in Theorem \ref{thm:IOSequivalences}, OL allows a characterization of IOS in terms of the OULIM property. In this context also see Example \ref{ex:OGULIMandOUGSnottoIOS}.

%\pbc{Call IOS $\wedge$ OL = OLIOS as in \cite{Sontag2000}?}

\begin{proposition}[IOS $\wedge$ OL superposition theorem]
\label{prop:OLIOSequivalences}
    Let $\Sigma = (I, X, \U, \phi, Y,h)$ be a forward complete control system with outputs. 
    Then the following statements are equivalent:
    \begin{enumerate}
        \item\label{itm:OLIOSequivalences1} $\Sigma$ is IOS and OL.
        \item\label{itm:OLIOSequivalences2} $\Sigma$ is OUAG, OL, and $h$ is $\KK$-bounded.
        \item\label{itm:OLIOSequivalences3} $\Sigma$ is  OULIM, OL, and $h$ is $\KK$-bounded.
    \end{enumerate}
\end{proposition}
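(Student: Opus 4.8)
The plan is to prove the cycle $\ref{itm:OLIOSequivalences1}\implies\ref{itm:OLIOSequivalences2}\implies\ref{itm:OLIOSequivalences3}\implies\ref{itm:OLIOSequivalences1}$. The first implication is essentially free: if $\Sigma$ is IOS, then by Lemma \ref{lem:IOStoBORSandOCEPandOGUAGandOGULIM} it is OUAG (via OCAG $\implies$ OGUAG $\implies$ OUAG, or directly), and IOS forces $\norm{y(t,x,u)}_Y \le \beta(\norm x_X,0) + \gamma(\norm u_\U)$, which with $\sigma_1 := \beta(\cdot,0) \in \KK_\infty$ shows $h$ is $\KK$-bounded since $\norm{h(x,u)}_Y = \norm{y(0,x,u)}_Y$. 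OL is assumed. The second implication $\ref{itm:OLIOSequivalences2}\implies\ref{itm:OLIOSequivalences3}$ is also immediate, since OUAG $\implies$ OULIM by Lemma \ref{lem:IOStoBORSandOCEPandOGUAGandOGULIM} (choosing $t=\tau$), and OL and $\KK$-boundedness of $h$ are retained verbatim.

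The substance of the proposition is the implication $\ref{itm:OLIOSequivalences3}\implies\ref{itm:OLIOSequivalences1}$: from OULIM $\land$ OL $\land$ ($h$ $\KK$-bounded) deduce IOS. The strategy, following the ODE argument of \cite{ISW01}, is to combine OL (which controls the output by its \emph{initial} value plus an input term and, crucially, \emph{localizes} the output: once $\norm{y(t_0,x,u)}_Y$ is small and the input is small, the output stays small for all later times) with OULIM (which drives the output close to the $\gamma(\norm u_\U)$-ball in a time bounded uniformly in $\norm x_X$ and $\norm u_\U$). First I would establish OULS: by OL, $\norm{y(0,x,u)}_Y = \norm{h(x,u)}_Y \le \sigma_1(\norm x_X) + \gamma_1(\norm u_\U)$ is small when $\norm x_X,\norm u_\U$ are small, hence by the $\eps$-$\delta$ form of OL (Lemma \ref{lem:redefineOLOULS}(1)) the output stays below any prescribed $\eps$; this gives the $\eps$-$\delta$ characterization of OULS in Lemma \ref{lem:redefineOLOULS}(2), hence OULS, hence OUGS once we also have OUGB. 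For OUGB, $\KK$-boundedness of $h$ gives $\norm{y(0,x,u)}_Y \le \sigma_1(\norm x_X)+\gamma_1(\norm u_\U)$, and OL then yields $\norm{y(t,x,u)}_Y \le \sigma(\sigma_1(\norm x_X)+\gamma_1(\norm u_\U)) + \gamma(\norm u_\U) \le \sigma(2\sigma_1(\norm x_X)) + \sigma(2\gamma_1(\norm u_\U)) + \gamma(\norm u_\U)$, which is the OUGB (indeed OUGS) bound. So far this already gives OUGS. It remains to upgrade OULIM to OUAG, because then Theorem \ref{thm:IOSequivalences}(\ref{itm:thmEquivStatement4}) (OUAG $\land$ OUGS) closes the loop.

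The key step is therefore OULIM $\land$ OL $\implies$ OUAG. Fix $\eps,r,s>0$. Let $\sigma,\gamma_{OL}\in\KK_\infty$ be the OL rates and $\gamma_{LIM}\in\KK_\infty$ the OULIM gain; we may take the IOS/OUAG gain to be $\gamma := \gamma_{LIM} + \gamma_{OL}$ (or a suitable majorant). Given $\eps$, choose $\eps' > 0$ small enough that $\sigma(2\eps') \le \eps/2$ and also $\sigma(2\gamma_{LIM}(t)) \le \gamma_{OL}(t) $ is compatible — more precisely, pick $\eps'$ with $\sigma(\eps' + \gamma_{LIM}(\norm u_\U)) \le \eps + \gamma(\norm u_\U)$ uniformly; concretely $\eps' := \sigma^{-1}(\eps/2)/2$ suffices together with subadditivity-type splitting $\sigma(a+b)\le\sigma(2a)+\sigma(2b)$. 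By OULIM with parameters $(\eps', r, s)$ there is $\tau_0 = \tau(\eps',r,s)\in I$ such that for every $x\in B_r$, $u\in B_{s,\U}$ there exists $t_0 = t_0(x,u)\le\tau_0$ with $\norm{y(t_0,x,u)}_Y \le \eps' + \gamma_{LIM}(\norm u_\U)$. Now apply OL to the system restarted at time $t_0$: using the cocycle property \ref{cond:cocycleProperty} and shift invariance of $\U$, for all $t\ge t_0$,
\begin{align*}
\norm{y(t,x,u)}_Y &= \norm{y(t-t_0,\,\phi(t_0,x,u),\,u(t_0+\cdot))}_Y \\
&\le \sigma\!\para{\norm{y(0,\phi(t_0,x,u),u(t_0+\cdot))}_Y} + \gamma_{OL}\!\para{\norm{u(t_0+\cdot)}_\U} \\
&\le \sigma\!\para{\norm{y(t_0,x,u)}_Y} + \gamma_{OL}\!\para{\norm{u}_\U} \\
&\le \sigma\!\para{\eps' + \gamma_{LIM}(\norm u_\U)} + \gamma_{OL}(\norm u_\U) \le \eps + \gamma(\norm u_\U).
\end{align*}
Since $t_0 \le \tau_0$, this holds for all $t \ge \tau_0 =: \tau(\eps,r,s)$, which is exactly OUAG. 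Combined with OUGS (established above) and Theorem \ref{thm:IOSequivalences}, $\Sigma$ is IOS, completing the cycle.

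The main obstacle I anticipate is the bookkeeping in this last step: correctly invoking the cocycle property and the shift-invariance inequality $\norm{u(t_0+\cdot)}_\U \le \norm{u}_\U$ so that restarting at the data-dependent time $t_0$ does not enlarge the input norm, and choosing $\eps'$ (and the composite gain $\gamma$) so that $\sigma(\eps' + \gamma_{LIM}(\norm u_\U)) + \gamma_{OL}(\norm u_\U)$ is genuinely dominated by $\eps + \gamma(\norm u_\U)$ \emph{uniformly} in $\norm u_\U \in [0,s)$ — this needs the elementary splitting $\sigma(a+b)\le\sigma(2a)+\sigma(2b)$ for $\sigma\in\KK_\infty$ and a little care because $\gamma_{LIM}$ depends on $\norm u_\U$, not just on $s$; here Lemma \ref{lem:OULIMtoOGULIM}'s alternative characterization \eqref{ineq:alternativeOULIM} (bound in terms of $s$) may be the cleaner tool, letting one absorb the $s$-dependence into $\tau$. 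Everything else is routine application of the $\eps$-$\delta$ reformulations in Lemma \ref{lem:redefineOLOULS} and the implications already collected in Lemma \ref{lem:IOStoBORSandOCEPandOGUAGandOGULIM}.
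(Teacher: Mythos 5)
Your proposal is correct, and the easy directions 1)$\implies$2)$\implies$3) coincide with the paper's argument (Lemma \ref{lem:IOStoBORSandOCEPandOGUAGandOGULIM} plus evaluating the IOS/OUGS estimate at $t=0$ to get $\KK$-boundedness of $h$). For the substantive implication 3)$\implies$1) you take a genuinely different route: you apply the OULIM-then-restart-with-OL argument (cocycle property, shift invariance $\norm{u(t_0+\cdot)}_\U\le\norm{u}_\U$, and the splitting $\sigma(a+b)\le\sigma(2a)+\sigma(2b)$) \emph{once} for a single $\eps$ to upgrade OULIM $\land$ OL to OUAG, derive OUGS from OL and $\KK$-boundedness of $h$ exactly as the paper does, and then close via Theorem \ref{thm:IOSequivalences}, item 4) (OUAG $\land$ OUGS $\iff$ IOS). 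The paper instead runs the same restart argument along a geometric sequence $\eps_n(r)=e^{-n}\eps_0(r)$ with hitting times $\tau_n=\tau(\eps_n(r),r,r)$ and assembles a piecewise-defined $\KK\LL$-function $\beta$ directly (the construction borrowed from Proposition \ref{prop:OCAGequivalences}), so it never passes through OUAG or invokes the superposition theorem. Your reduction is legitimate (Theorem \ref{thm:IOSequivalences} is proved independently of this proposition, so there is no circularity) and spares the reader the $\beta$-construction; the paper's version is self-contained and exhibits the $\KK\LL$ bound explicitly. Two cosmetic points: the OUAG/IOS gain you get is $\sigma\circ(2\gamma_{LIM})+\gamma_{OL}$ rather than $\gamma_{LIM}+\gamma_{OL}$ (your "suitable majorant" caveat already covers this, and it matches the paper's $\widetilde\gamma$), and the detour through OULS/OUGB is unnecessary since your OL-plus-$\KK$-boundedness chain yields OUGS outright.
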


\begin{proof}
    By Lemma \ref{lem:IOStoBORSandOCEPandOGUAGandOGULIM}, we have IOS$\implies$OUAG$\implies$OULIM. Furthermore, by Lemma \ref{lem:IOStoBORSandOCEPandOGUAGandOGULIM}, IOS$\implies$OUGS and for $t = 0$ OUGS implies that $h$ is a $\KK$-bounded operator. Hence, the implications \ref{itm:OLIOSequivalences1}$\implies$\ref{itm:OLIOSequivalences2} and \ref{itm:OLIOSequivalences2}$\implies$\ref{itm:OLIOSequivalences3}  hold true.
%\mir{How does Lemma 15 prove boundedness of $h$?}

    It remains to show that
    \begin{align*}
        \text{OULIM} \ \land \ \text{OL} \ \land\ h \text{ is $\KK$-bounded} \quad \implies \quad \text{IOS}.
    \end{align*}
    We follow the approach in \cite[Thm. 2]{Bachmann2022a}.
    
    Using first OL, and then $\KK$-boundedness of $h$, we obtain for all $t \in I$, all $x\in X$ and all $u\in\Uc$ that
    \begin{align*}
        \norm{y(t,x,u)}_Y 
        &\leq \sigma\!\paren{\norm{y(0,x,u)}_Y} + \gamma\!\paren{\norm{u}_{\U}} \\
        &\leq \sigma\!\paren{\sigma_1(\norm{x}_X) + \gamma_1(\norm{u}_\U)} + \gamma\!\paren{\norm{u}_{\U}} \\
        &\leq (\sigma \circ 2\sigma_1)(\norm{x}_X) + (\sigma \circ 2\gamma_1 + \gamma)\!\paren{\norm{u}_{\U}}\!,
    \end{align*}
    which shows OUGS.
    
    Let $\Sigma$ be OULIM and OL, and without loss of generality we assume that the  corresponding gain $\gamma\in\KK$ is in both cases the same. Take any $r>0$, and  define the sequence $(\eps_n(r))_{n \in \N_0}$, where $\eps_0(r) \coloneq (\sigma \circ 2\sigma_1 + \sigma \circ 2\gamma_1 + \gamma)(r)$ and $\eps_n(r) \coloneq e^{-n}\eps_0(r)$ for $n \in \N$. By OULIM, for every $r>0$ and any $n\in\N_0$, there is $\tau_n \coloneq \tau(\eps_n(r),r,r) \in I$ such that for all $x \in X$ and all $u \in \U$ for which $r = \max\{\norm{x}_X\!, \norm{u}_\U\}$ and some $t^* \in I$, $t^* \leq \tau_n$,
    \begin{align}
        \norm{y(t^*,x,u)}_Y 
        &\leq \eps_n(r) + \gamma\!\paren{\norm{u}_{\U}} \nonumber\\
        &\leq \eps_n(\norm{x}_X) + \eps_n(\norm{u}_\U) + \gamma\!\paren{\norm{u}_{\U}} \nonumber\\
        &\leq \eps_n(\norm{x}_X) + (\eps_0 + \gamma)\!\paren{\norm{u}_{\U}} \label{ineq:OULIMConvergingBound}
    \end{align}
    holds true. By OUGS, we can take $\tau_0 = \tau(\eps_0(r),r,r) = 0$. We can assume that the sequence $(\tau_n)_{n \in \N_0}$ is strictly increasing and $\lim_{n \to \infty} \tau_n = \infty$. 

    By the cocycle property, for all $t,s \geq 0$, $x \in X$ and $u \in \U$ the identity
    \begin{align}
        y(t + s,x,u)
        &= h(\phi(t + s,x,u),u(t\pb{+s})) \nonumber\\
        &= h\!\paren{\phi\!\paren{s,\phi(t,x,u),u(t + \ph)},u(t + s)} \nonumber\\
        &= y\!\paren{s,\phi(t,x,u),u(t + \ph)} \label{eq:cocyclePropertyOutput}
    \end{align}
    holds. Then, we can estimate the output for all $t \geq t^*$. In the following calculation, we consecutively use \eqref{eq:cocyclePropertyOutput}, OL, the axiom of shift invariance, and \eqref{ineq:OULIMConvergingBound} to obtain for each $x \in B_r$, $u \in B_{r,\U}$ that there exists $t^* \leq \tau_n$ such that for all $t \geq \tau_n$, it holds that
    \begin{align}
        &\norm{y(t,x,u)}_Y
        = \norm{y\!\paren{t- t^*,\phi(t^*,x,u),u(t^* + \ph)}}_Y \nonumber\\
        &\leq \sigma\!\paren{\norm{y(t^*,x,u)}_Y} + \gamma(\|u(t^* + \ph)\|_\U) \nonumber\\
        &\leq \sigma\!\paren{\norm{y(t^*,x,u)}_Y} + \gamma(\|u\|_\U) \nonumber\\
        &\leq \sigma\!\paren{\eps_n(\norm{x}_X) + (\gamma + \eps_0)\!\paren{\norm{u}_\U}} + \gamma\!\paren{\norm{u}_\U} \nonumber \\
        &\leq \sigma(2\eps_n(\norm{x}_X)) + \sigma\!\paren{2 (\gamma + \eps_0)\!\paren{\norm{u}_\U}} + \gamma\!\paren{\norm{u}_\U} \nonumber \\
        &\leq \widetilde \sigma(\eps_n(\norm{x}_X)) + \widetilde \gamma\!\paren{\norm{u}_\U}\!,\label{ineq:estimateIOSfromOGUAG}
    \end{align}
    where we define $\widetilde \sigma = \sigma(2 \ph)$ and $\widetilde \gamma = \sigma \circ (2 (\gamma + \eps_0)) + \gamma$.
    For $r,t \geq 0$, we define the $\KK\LL$-function $\beta$ by
    \begin{align*}
        \beta(r,t) = \widetilde \sigma\!\paren{\exp\!\paren{-(n - 1) - \tfrac{t - \tau_n}{\tau_{n + 1} - \tau_n}}\eps_0(r)}\!,
    \end{align*}
    which is piecewise defined for $t \in [\tau_n,\tau_{n + 1})$, $n \in \N_0$.
        
% \mir{If $\tau_0=0$, this would fully define $\beta$. But why it should be?} \todo[inline,color = green]{Corrected: The start of the induction was missing indeed.

% To guarantee boundedness of $\beta$, a very strong condition is necessary:
% \begin{align*}
%     \norm{y(0,x,u)} \leq \sigma(\norm{x})
% \end{align*}
% for some $\sigma \in \KK$.
% In \cite{ISW01}, continuity of $h$ and $h(0) = 0$ is a precondition. This is obviously not enough because of noncompactness of the state space.
% }
    Now, with analogous arguments as in the proof of Proposition \ref{prop:OCAGequivalences}, for all $x \in X$ and all $u \in \U$, it follows that
    \begin{align*}
        \norm{y(t,x,u)}_Y 
        &\leq \beta\!\paren{\norm{x}_X\!,t} + \widetilde \gamma\!\paren{\norm{u}_\U}\!, \quad  t \in I,
    \end{align*}
    as desired.
    % From this construction, we obtain
    % \begin{align*}
    %     \beta(r,t) \geq \widetilde \sigma(e^{-n}\eps_0(r)) = \widetilde \sigma(\eps_n(r)), \ t \in [\tau_n,\tau_{n + 1}),\, n \in \N_0.
    % \end{align*}
    % This especially holds true for $r = \norm{x}_X$ and by \eqref{ineq:estimateIOSfromOGUAG} we obtain
    % \begin{align*}
    %     \norm{y(t,x,u)}_Y 
    %     &\leq \widetilde \sigma(\eps_n(\norm{x}_X)) + \widetilde \gamma\!\paren{\norm{u}_\U}\\
    %     &\leq \beta\!\paren{\norm{x}_X\!,t} + \widetilde \gamma\!\paren{\norm{u}_\U}\!,
    % \end{align*}
    % for $t \in [\tau_n,\tau_{n + 1})$, $n \in \N_0$, as desired.
\end{proof}

\subsection{Sufficient condition for OL}
%\pbc{Restructured: OOGULIM is not necessary}

As OL plays an important role in Proposition \ref{prop:OLIOSequivalences}, we derive sufficient conditions for the OL property.
To this aim, we introduce a modified version of OULIM and OGULIM. The difference between the newly defined OOULIM as compared to OULIM and OGULIM lies in the choice of the \pb{uniformity} with respect to the initial condition. For OOULIM, the initial condition $x$ is chosen such that the output $y(0,x,u)$ is in a bounded ball whereas for OULIM and OGULIM the initial state $x$ itself is bounded. Similarly, we modify BORS.
% \begin{definition}\label{def:OOGULIM}
%     We say $\Sigma$ possesses the \emph{output-to-output global uniform limit property (OOGULIM)} if there exists
% 	$\gamma \in \KK_\infty$ such that for all $\eps, r > 0$ there exists $\tau = \tau(\eps,r) \in I$ such that for all $x \in X$ and all $u \in \U$ such that $y(0,x,u) \in B_{r,Y}$, there exists $t \in I$, $t \leq \tau$ satisfying
% 	\begin{align*} %\label{ineq:ISS}
% 		\norm{y(t,x,u)}_Y \leq \eps + \gamma\!\paren{\norm{u}_{\U}}.
% 	\end{align*}
% \end{definition}
\begin{definition}\label{def:OOULIM}
    We say $\Sigma$ possesses the \emph{output-to-output uniform limit property (OOULIM)} if there exists
	$\gamma \in \KK_\infty$ such that for all $\eps, r, s > 0$ there exists $\tau = \tau(\eps,r) \in I$ such that for all $x \in X$ and all $u \in B_{s,\U}$ such that $y(0,x,u) \in B_{r,Y}$, there exists $t \in I$, $t \leq \tau$ satisfying
	\begin{align*} %\label{ineq:ISS}
		\norm{y(t,x,u)}_Y \leq \eps + \gamma\!\paren{\norm{u}_{\U}}.
	\end{align*}
\end{definition}

Note that opposed to OULIM and OGULIM, there is no local and global version of OOULIM with respect to the input as can be seen by the following lemma.
\begin{lemma}\label{lem:OOULIMglobal}
    Let $\Sigma = (I, X, \U, \phi, Y,h)$ be a forward complete control system with outputs. Let $\Sigma$ be OOULIM with parameters $\eps,r,s>0$  and $\tau = \tau(\eps,r,s)$ as in Definition \ref{def:OOULIM}. Then, $\tau$ can be chosen uniformly for all $s$, i.e., there exists $\gamma \in \KK_\infty$ such that for all $\eps, r > 0$, there exists $\tau = \tau(\eps,r)$ such that for all $x \in X$ and all $u \in \U$ such that $y(0,x,u) \in B_{r,Y}$, there exists $t \in I$, $t \leq \tau$ satisfying
	\begin{align*} %\label{ineq:ISS}
		\norm{y(t,x,u)}_Y \leq \eps + \gamma\!\paren{\norm{u}_{\U}}\!.
	\end{align*}
\end{lemma}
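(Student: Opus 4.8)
The plan is to exploit the fact that, in contrast to OULIM and OGULIM, the smallness requirement on the initial data in OOULIM is formulated in terms of the output $y(0,x,u)$ and not of the state itself. Hence the hypothesis $y(0,x,u)\in B_{r,Y}$ already provides an a priori bound on the output at time $0$, and this lets us discard inputs of large norm at no cost, reducing the whole claim to a single application of the $s$-dependent OOULIM estimate with $s$ chosen as a function of $r$ alone.

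More precisely, let $\gamma\in\KK_\infty$ and $\tau=\tau(\eps,r,s)$ be as provided by the OOULIM hypothesis of the lemma, and fix $\eps,r>0$. Since $\gamma\in\KK_\infty$, its inverse $\gamma^{-1}$ is again of class $\KK_\infty$, so $R\coloneq\gamma^{-1}(r)>0$. I would then distinguish two cases for an admissible pair $(x,u)$, that is, a pair with $y(0,x,u)\in B_{r,Y}$. If $\norm{u}_\U\ge R$, then $\gamma(\norm{u}_\U)\ge r$, and therefore $\norm{y(0,x,u)}_Y<r\le\gamma(\norm{u}_\U)\le\eps+\gamma(\norm{u}_\U)$; since $0\in I$ and $0\le\tau$ for every $\tau\in I$, the choice $t=0$ already realizes the claimed inequality. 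If instead $\norm{u}_\U<R$, then $u\in B_{R,\U}$, and applying the OOULIM hypothesis with the parameter $s\coloneq R$ yields a time $\tau(\eps,r,R)\in I$ and some $t\le\tau(\eps,r,R)$ with $\norm{y(t,x,u)}_Y\le\eps+\gamma(\norm{u}_\U)$. Setting $\widetilde\tau(\eps,r)\coloneq\tau(\eps,r,\gamma^{-1}(r))$ and keeping the same gain $\gamma$, in both cases one obtains a time $t\le\widetilde\tau(\eps,r)$ satisfying the desired bound, and $\widetilde\tau$ depends on $\eps$ and $r$ only, as required.

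I do not expect a genuine obstacle: this is a case split in the spirit of Lemma \ref{lem:alternativeOUAG} and of the OULIM$\implies$OGULIM part of Lemma \ref{lem:OULIMtoOGULIM}, but in fact simpler, since the output-based smallness condition makes superfluous any boundedness assumption on $h$ (which was needed for OULIM$\implies$OGULIM). The only points that require a little care are: checking that $\gamma^{-1}\in\KK_\infty$, so that $s=\gamma^{-1}(r)$ is a legitimate, strictly positive parameter in the OOULIM hypothesis; respecting the open-ball convention $B_{r,Y}=\{y\in Y:\norm{y}_Y<r\}$ by keeping the strict inequality $\norm{y(0,x,u)}_Y<r$ in the large-input case; and recalling that $0\in I$ with $0\le\tau$ for all $\tau\in I$, so that $t=0$ is always an admissible choice of limit time.
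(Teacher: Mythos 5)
Your proof is correct and follows essentially the same route as the paper: a case split on $\norm{u}_\U$ against a threshold depending only on $r$ (the paper uses $R=\gamma^{-1}(\max\{r-\eps,0\})$, you use the slightly simpler $R=\gamma^{-1}(r)$), taking $t=0$ for large inputs and invoking the $s$-dependent OOULIM bound with $s=R$ for small ones, so that $\tau(\eps,r,R)$ depends on $\eps$ and $r$ only.
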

\begin{proof}
    Let $\Sigma$ be OOULIM with $\gamma, \tau$ as in Definition \ref{def:OOULIM} and fix $\eps,r > 0$. Let $R = R(r) \coloneq \gamma^{-1}(\max\{r - \eps,0\})$. Then, for $x \in X$, $u \in \U$, such that $\norm{u}_\U \geq R$, $y(0,x,u) \in B_{r,Y}$ and $t = 0$, it holds that
    \begin{align*}
        \norm{y(0,x,u)}_Y &\leq \eps + r - \eps 
        \leq \eps + \gamma\!\paren{R}
        \leq \eps + \gamma\!\paren{\norm{u}_\U}\!.
    \end{align*}
    Next, we give an estimate for $u \in B_{R,\U}$: By OOULIM, there exists $\tau = \tau\!\paren{\eps , r, R}$ such that for all $x \in X$, $u \in B_{R,\U}$ such that $y(0,x,u) \in B_{r,Y}$, there exists $t \in I$, $t \leq \tau$ such that
    \begin{align*}
        \norm{y(t,x,u)}_Y
        \leq \eps + \gamma\!\paren{\norm{u}_\U}\!.
    \end{align*}
    Then, $\widetilde \tau\!\paren{\eps , r} = \max\{ \tau\!\paren{\eps , r, R}, 0 \} = \tau\!\paren{\eps , r, R}$ is an upper time bound for the OOULIM behavior that does not depend on $\norm{u}_\U$ as we chose $R$ to be a function of $r$.
    % Hence, $\Sigma$ is OOGULIM.
\end{proof}

\begin{definition}\label{def:OBORS}
    System $\Sigma$ is said to have \emph{output-bounded output reachability sets (OBORS)} if for all $C > 0$ and $\tau \in I$ it holds that
	\begin{align*} %\label{ineq:ISS}
		\sup_{x \in X,\ \norm{u}_\U,\, \norm{y(0,x,u)}_Y < C, \ t < \tau}\norm{y(t,x,u)}_Y < \infty.
	\end{align*}
\end{definition}

\begin{definition}\label{def:OOUGB}
	We call $\Sigma$ \emph{output-to-output-uniformly globally bounded (OOUGB)} if there exist $\sigma, \gamma \in \KK_\infty$, $c > 0$ such that all $x \in X$ and $u \in \U$ satisfy
	\begin{align*} %\label{ineq:OULS}
		\norm{y(t,x,u)}_Y \leq \sigma\!\paren{\norm{y(0,x,u)}_Y} + \gamma\!\paren{\norm{u}_{\U}} + c, \quad t \in I.
	\end{align*}
\end{definition}

We are now ready to provide a sufficient condition for OL. The notion of OOULIM is crucial in the following and cannot be exchanged even by OGULIM as shown in Example \ref{ex:OGULIMandLocalOLandOBORSnottoGlobalOL}.
\begin{lemma}\label{lem:OOULIMandLocalOLandOBORStoGlobalOL}
    Let $\Sigma = (I, X, \U, \phi, Y,h)$ be a forward complete control system with outputs. Let $\Sigma$ be OOULIM, locally OL and OBORS. Then, $\Sigma$ is  OL.%\todo{new, parts of the proof are copied from Lemma \ref{lem:OUAGandOULSandBORStoOUGS}}
\end{lemma}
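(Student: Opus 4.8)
\emph{Approach.} My plan is to split the statement into two implications: (i) OOULIM $\wedge$ OBORS $\implies$ OOUGB (Definition~\ref{def:OOUGB}); and (ii) OOUGB $\wedge$ locally OL $\implies$ OL. Since $\Sigma$ is locally OL by hypothesis, combining (i) and (ii) gives the claim. Step (ii) is a direct analogue of the proof of Lemma~\ref{lem:OUGBandOULStoOUGS}: I would take $\sigma_1,\gamma_1,c$ from OOUGB and $\sigma_2,\gamma_2,r$ from the $\norm{y(0,x,u)}_Y$-version of locally OL obtained from the $\eps$--$\delta$ characterisation in Lemma~\ref{lem:redefineOLOULS} (reconstructing comparison functions in the standard way), define $\sigma,\gamma\in\KK_\infty$ dominating $\max\{\sigma_1,\sigma_2\}$, $\max\{\gamma_1,\gamma_2\}$ on $[0,r)$ and $\sigma_1+c$, $\gamma_1+c$ on $[r,\infty)$, and distinguish the three cases $\norm{y(0,x,u)}_Y,\norm{u}_\U<r$; $\norm{y(0,x,u)}_Y<r\le\norm{u}_\U$; $\norm{y(0,x,u)}_Y\ge r$ exactly as there, using the local bound in the first case and the OOUGB bound in the other two; this yields $\norm{y(t,x,u)}_Y\le\sigma(\norm{y(0,x,u)}_Y)+\gamma(\norm{u}_\U)$ for $t\in I$, i.e. OL.

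The substance is step (i). First I would extract from OBORS a continuous, component-wise increasing function $\mu\colon(\R^+_0)^3\to\R^+_0$ with $\norm{y(t,x,u)}_Y\le\mu(\norm{y(0,x,u)}_Y,\norm{u}_\U,t)$ (as in \cite[Lem.~2.12]{Mironchenko2018}), invoke Lemma~\ref{lem:OOULIMglobal} to work with the version of OOULIM whose waiting time $\tau(\eps,r)$ does not depend on the input bound, and assume $\tau(\eps,\cdot)$ continuous and increasing (replacing it by a majorant otherwise). Then I fix $\eps:=1$: given $(x,u)$ with $a:=\norm{y(0,x,u)}_Y$ and $b:=\norm{u}_\U$, OOULIM gives $t_1^*\le\tau(1,a+1)$ with $\norm{y(t_1^*,x,u)}_Y\le 1+\gamma(b)=:\eta$; using the cocycle property and shift invariance to restart the system at $t_1^*$ (so that $y(t_1^*+s,x,u)=y(s,\phi(t_1^*,x,u),u(t_1^*+\ph))$ and the input norm does not increase), I iterate to obtain times $t_1^*<t_2^*<\cdots$ with $\norm{y(t_k^*,x,u)}_Y\le\eta$ and consecutive gaps bounded by the \emph{fixed} quantity $T(b):=\tau(1,\eta+1)$, since from the second step on the radius parameter never exceeds $\eta+1$. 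On $[0,t_1^*]$, $\mu$ gives the bound $\mu(a,b,\tau(1,a+1))$; on each $[t_k^*,t_k^*+T(b)]$ it gives $\mu(\eta,b,T(b))$, uniformly in $k$. Splitting these two continuous increasing functions of $(a,b)$ in the usual way (first $\Phi(a,b)\le\Phi(\max\{a,b\},\max\{a,b\})$, then $\Phi(m,m)-\Phi(0,0)\le\rho(m)$ for some $\rho\in\KK_\infty$, and $\rho(\max\{a,b\})\le\rho(2a)+\rho(2b)$) produces exactly the OOUGB estimate.

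The hard part — and, I believe, the reason this lemma is stated without assuming continuity of trajectories — is ensuring that the iteration in step (i) exhausts $I$. Because $t_{k+1}^*-t_k^*\le T(b)$, the intervals $[t_k^*,t_k^*+T(b)]$ overlap, so their union is an interval $[t_1^*,\,T(b)+\sup_k t_k^*)$; even if $\sup_k t_k^*=S_\infty<\infty$, this union still contains $S_\infty$, so $\norm{y(S_\infty,x,u)}_Y\le\mu(\eta,b,T(b))$ \emph{despite} a possible discontinuity there, and the iteration may be continued from $S_\infty$. Converting this into a proof that the running bound holds on all of $I$ requires a maximality argument over the set of times up to which the bound has been verified; this bookkeeping is what replaces the continuity argument of \cite[Prop.~10]{MiW18b}, and it is the point I would be most careful about.
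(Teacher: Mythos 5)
Your decomposition is the same as the paper's: Step (i) OOULIM $\wedge$ OBORS $\implies$ OOUGB, Step (ii) OOUGB $\wedge$ locally OL $\implies$ OL via the three-case argument of Lemma \ref{lem:OUGBandOULStoOUGS} with $\norm{y(0,x,u)}_Y$ in place of $\norm{x}_X$ (using the output-ball form of local OL from Lemma \ref{lem:redefineOLOULS}); your Step (ii) is fine and matches the paper. The gap is in Step (i), at exactly the point you flag, and it is more than bookkeeping. OOULIM only asserts that \emph{some} $t\leq\tau$ after the restart has output $\leq \eps+\gamma(\norm{u}_\U)$, and nothing prevents that $t$ from being $0$: after restarting at $t_k^*$, where the output is already $\leq\eta$, the time $t=0$ itself satisfies the required bound. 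Hence your iteration can stall, $t_k^*=t_1^*$ for all $k$, so $S_\infty=t_1^*$ and ``continuing from $S_\infty$'' is circular --- you restart from the very point you already restarted from. Leapfrogging instead from the frontier $t_1^*+T(b)$, where you only know the inflated bound $\mu(\eta,b,T(b))$, does not repair this: the OOULIM waiting time there is $\tau(1,\mu(\eta,b,T(b))+1)$, the OBORS bound over that window is $\mu(\mu(\eta,b,T(b)),b,\cdot)$, and since you again cannot guarantee reaching an $\eta$-good time strictly beyond the frontier, successive restarts escalate the bound without limit. So the forward iteration, as written, does not deliver a uniform OOUGB estimate.

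The missing idea --- and the way the paper closes the argument --- is a two-threshold device combined with a backward-looking, last-crossing-time contradiction rather than a forward iteration. Set $R(r):=\mu(r,r,1)$ and call a time ``good'' if the output is $\leq R(r)$. Every time supplied by OOULIM has output $\leq r$ (the smaller threshold), and OBORS over the subsequent \emph{unit-length} window then yields a whole interval of good times strictly after it. If some time $t$ violated the candidate bound $\widetilde\sigma(r)=\mu(R(r),\cdot,\tau(r))$, one takes $t_m:=\sup\{s\in[0,t]:\norm{y(s,x,u)}_Y\leq R(r)\}$; if $t-t_m\leq\tau(r)$, OBORS from $t_m$ contradicts the violation, while if $t-t_m>\tau(r)$, the OOULIM-provided time $t^*+t_m$ with output $\leq r$ produces good times in $(t^*+t_m,\min\{t^*+t_m+1,t\}]$, i.e.\ strictly beyond $t_m$ and not beyond $t$, contradicting the supremum. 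It is precisely the margin between the small threshold $r$ and the inflated threshold $R(r)$ that forces the controlled times to advance; your single-threshold iteration lacks this mechanism, so the maximality argument you anticipate cannot be closed in the form you describe.
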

% \pbc{I adapted the proof because \cite[Thm. 5]{MiW18b} implicitly used continuity of the trajectories in what is named "Case 2" here.}

\begin{proof}
    We follow the approach in \cite[Thm. 5]{MiW18b}.

    Step 1: OOULIM $\land$ OBORS$\implies$OOUGB.
    The proof is adapted from \cite[Prop. 10]{MiW18b}. 
    
    By OBORS, there exists a continuous and %component-wise 
    with respect to all variables \pb{increasing} function $\mu\colon (\R_+)^3 \to \R_+$ that provides the bound
    \begin{align*}
        \norm{y(t,x,u)}_Y \leq\mu\paren{\norm{y(0,x,u)}_Y\!, \norm{u}_\U\!,t\vphantom{\big(}}\!.
    \end{align*}
    For $r > 0$,  we define $R(r) \coloneq \mu(r,r,1)$.
    % \begin{align*}
    %     R(r) \geq \sup\nolimits_{x \in X,\ \norm{u}_\U < r,\, \norm{y(0,x,u)}_Y < r, \ t \leq 1}\norm{y(t,x,u)}_Y
    % \end{align*}
    % which exists and can be chosen to be continuous and increasing due to OBORS.
    
    Next, we define a parameter $\tau$ such that for sufficiently small $x, u$ the output remains bounded for some $t \leq \tau$.
    Let $r = s > 0$ and $\eps = \tfrac{r}{2}$. By OOULIM there exists $\tau = \tau(r) \coloneq \tau\Big(1,\max\!\braces{R(r), \gamma^{-1}\!\paren{\tfrac{r}{2}}}\!\Big)$ such that for all $x \in X$ and $u \in B_{r,\U}$, such that $y(0,x,u) \in B_{R(r),Y}$ and $\norm{u}_\U < \gamma^{-1}\!\paren{\tfrac{r}{2}}$, there exists $t \in I$, $t \leq \tau$ such that
    \begin{align} \label{ineq:boundednessLimitOL}
		\norm{y(t,x,u)}_Y \leq \tfrac{r}{2} + \gamma\!\paren{\norm{u}_{\U}} \leq r
	\end{align}
    holds true. We can assume that $\tau$ is increasing and continuous. If $\tau$ is not continuous, we can replace it by the continuous \amc{and increasing} function $\overline \tau = \overline \tau(r) \coloneq \frac{1}{r} \int_r^{2r}\tau(s) \diff s \geq \tau(r)$.

    % Then, by OBORS exists a continuous and component-wise increasing function $\mu\colon (\R_+)^3 \to \R_+$ that provides the bound
    % \begin{align*}
    %     \norm{y(t,x,u)}_Y \leq\mu\big(\!\norm{y(0,x,u)}_Y\!, \norm{u}_\U\!,t\big)\!.
    % \end{align*}
    With the definition $\widetilde \gamma(r) \coloneq \max\{r, 2\gamma(r)\}$, we have that
    \begin{align}
        &x \in X, u \in \overline{B_{\widetilde \gamma^{-1}(r), \U}}: y(0,x,u) \in \overline{B_{R(r), Y}}, t \leq \tau(r) \nonumber\\ 
        &\implies \norm{y(t,x,u)}_Y \leq \widetilde \sigma(r), \label{impl:outputBoundednessOL}
    \end{align}
    where we define the continuous and increasing function $\widetilde \sigma\colon \R_+ \to \R_+$, $r \mapsto \mu\!\paren{R(r), \widetilde \gamma^{-1}(r), \tau(r)}$. W.l.o.g., it holds that $\widetilde \sigma(r) > R(r)$ for all $r \geq 0$.

    We now use a bootstrapping argument to show that $\norm{y(t,x,u)}_Y \leq \widetilde \sigma(r)$ holds for all $ t \in I$. We assume the converse: Let there exist $x \in X$, $u \in B_{\widetilde \gamma^{-1}(r), \U}$, where $y(0,x,u) \in B_{r, Y}$, $t \in I$ such that $\norm{y(t,x,u)}_Y > \widetilde \sigma(r)$. Furthermore, we define
    \begin{align*}
        t_m \coloneq \sup\!\braces{s \in [0,t]\,\middle|\,\norm{y(s,x,u)}_Y \leq R(r)\vphantom{\big(} } \geq 0.
    \end{align*}
    \pb{Note that $t_m$ is well defined as $\norm{y(0,x,u)}_Y < r \leq R(r)$ by the definition of $R$.}
    By the cocycle property \ref{cond:cocycleProperty} it holds that
    \begin{align*}
        y(t,x,u) = y\!\paren{t - t_m, \phi(t_m,x,u),u(\ph + t_m)\vphantom{\big(}}\!.
    \end{align*}
    
    Case 1: Assume that $t-t_m \leq \tau(r)$ holds. Then, from $\norm{y(t_m,x,u)}_Y \leq R(r)$ and \eqref{impl:outputBoundednessOL}, it follows that $\norm{y(s,x,u)}_Y \leq \widetilde \sigma(s)$ for all $s \in [t_m,t]$.

    Case 2: Assume $t-t_m > \tau(r)$. Then by \eqref{ineq:boundednessLimitOL}, there exists some $\pb{t_m} \leq t^* < \tau(r) \pb{+ t_m}$ such that
    \begin{align*}
        \norm{y(t^*,x,u)}_Y &= \norm{y\!\paren{t^*\pb{- t_m},\phi(t_m,x,u), u(\ph + t_m)\vphantom{\big(}}}_Y 
        \leq r.
    \end{align*}
    %\delimiterfactor=1002
    But then, for all $s \in \big(t^*,\min\{t^*+1,t\}\big]$, it holds that
    \begin{align*}
        \norm{y(s,x,u)}_Y &= \norm{y\!\paren{s - t^*, \phi(t^*,x,u),u(\ph + t^*)\vphantom{\big(}}} \\
        &\leq R(\norm{y(t^*,x,u)}_Y) 
        \leq R(r),
    \end{align*}
    which contradicts the definition of $t_m$ \pb{as $s > t_m$}.
    Therefore, 
    \begin{align}
        &x \in X,\ u \in \overline{B_{\widetilde \gamma^{-1}(r), \U}}: y(0,x,u) \in \overline{B_{R(r), Y}},\ t \in I \nonumber\\
        &\implies \norm{y(t,x,u)}_Y \leq \widetilde \sigma(r). \label{impl:globalOutputBoundedness}
    \end{align}
    We define $\sigma \in \KK_\infty$, $\sigma(r) \coloneq \widetilde \sigma(r) - \widetilde \sigma(0)$. Let us define $r \coloneq \max\!\braces{\norm{y(0,x,u)}_Y\!, \widetilde\gamma(\norm{u}_{\U})}$ for $(x,u) \in X \times \U$. From \eqref{impl:globalOutputBoundedness}, we obtain
    \begin{align*}
        \norm{y(t,x,u)}_Y 
        &\leq \sigma\!\paren{\max\!\braces{\norm{y(0,x,u)}_Y\!, \widetilde\gamma(\norm{u}_{\U})}\vphantom{\big(}\!} + \widetilde \sigma(0) \\
        &\leq \sigma\!\paren{\norm{y(0,x,u)}_Y} + \sigma\!\paren{\widetilde\gamma(\norm{u}_{\U})} + \widetilde \sigma(0)
    \end{align*}
    for all $x \in X, u \in \U, t \in I$. This proves OOUGB of $\Sigma$.
    
    Step 2: OOUGB $\land$ local OL$\implies$OL. The proof is completely analogous to the proof of Lemma \ref{lem:OUGBandOULStoOUGS}, except of the right-hand side of the estimates $y(0,x,u)$ being used instead of $x$.
    %that on the right-hand side of the estimates $y(0,x,u)$ is used instead of  $x$.
 %    The proof is adapted from \cite[Lem. I.2]{SoW96}.
 %    Let $\Sigma$ be OOUGB and locally OL. Then, by OOUGB, there exist $\sigma_1, \gamma_1 \in \KK_\infty$, $c > 0$ such that all $x \in X$ and $u \in \U$ satisfy
	% \begin{align*} %\label{ineq:OULS}
	% 	\norm{y(t,x,u)}_Y \leq \sigma_1\!\paren{\norm{y(0,x,u)}_Y} + \gamma_1\!\paren{\norm{u}_{\U}} + c, \ t \in I.
	% \end{align*}
 %    By local OL, there exist $\sigma_2, \gamma_2 \in \KK_\infty$ and $r > 0$ such that all $x \in B_r$ and $u \in B_{r,\U}$ satisfy
	% \begin{align*} %\label{ineq:OL}
	% 	\norm{y(t,x,u)}_Y \leq \sigma_2\!\paren{\norm{y(0,x,u)}_Y} + \gamma_2\!\paren{\norm{u}_{\U}}, \qquad t \in I.
	% \end{align*}
 %    We now choose $\sigma, \gamma \in \KK_\infty$ such that
 %    \begin{align*}
 %        \sigma(s) &\geq 
 %        \begin{cases}
 %            \max\!\braces{\sigma_1(s),\sigma_2(s)}, &\text{if } s \leq r, \\
 %            \sigma_1(s) + c, &\text{if } s > r,
 %        \end{cases}
 %        \\
 %        \gamma(s) &\geq 
 %        \begin{cases}
 %            \max\!\braces{\gamma_1(s),\gamma_2(s)}, &\text{if } s \leq r, \\
 %            \gamma_1(s) + c, &\text{if } s > r.
 %        \end{cases}
 %    \end{align*}
 %    Then, for all $x \in X$ and $u \in \U$ it holds that
 %    \begin{align*}
 %        \norm{y(t,x,u)}_Y \leq \sigma\!\paren{\norm{y(0,x,u)}_Y} + \gamma\!\paren{\norm{u}_{\U}}, \qquad t \in I,
 %    \end{align*}
 %    i.e., $\Sigma$ is OL.
\end{proof}

\section{finite-dimensional IOS theory}\label{sec:finiteDimIOSTheory}
% \pbc{Following comment shifted from the beginning:}
% \mir{Comparison of our results to the results from ISW01 and possibly other papers is desired.
% We may also argue there how we can re-derive the characterizations from ISW01 from our results, using our equivalences for OLIM = OULIM.
% We can actually also mention there that our infinite-dimensional results are new even for ODEs, if the rhs does not satisfy the Lipschitz continuity requirement, needed for ISW01 results to hold.}

% \pbc{Do you mean weaken local Lipschitz to Lipschitz in the first variable on bounded subsets (see comment between Prop. \ref{prop:IOSsuperpositionsFiniteDim} and Proof) or do you really mean an extension of $\Sigma_\text{finite}$ for $f$ continuous? I think this is possible, but then the transition map is non-unique. How to manage that? Doesn't it cause more problems than it solves?
% }

% \mir{No, I mean that our finite-dimensional results are valid for Lipschitz continuous rhs, but our infinite-dimensional results are valid without this assumption - hence our inf-dim theorems are valid for ODEs which are not necessarily Lipschitz continuous. It is just a remark.
% }

% \pbc{See comment below Corollary.}
The present paper is motivated by the IOS superposition theorem for ODE systems proved in \cite[Thm. 1]{ISW01}. We want to rederive these results from our findings.

We consider a finite-dimensional output system
\begin{align}
    \Sigma_\text{finite}\colon 
    \begin{cases}
        \dot x = f(x,u), &t \in I, \\
        y = h(x),
    \end{cases}
\end{align}
where $I = \R_+$, $x \in X = \R^n$, $u \in \U$ and $\U$ is the space of essentially bounded Lebesgue measurable functions from $I$ to $U = \R^m$, \pb{$Y = \R^p$}, for some natural numbers $m,n,p$. Moreover, $f\colon X \times U \to X$ is Lipschitz continuous in the first variable on bounded subsets, i.e., for all $r > 0$, there exists $L(r) > 0$ such that for $x_1,x_2 \in B_r$, $u \in B_{r,U}$, it holds that
\begin{align*}
    \norm{f(x_1,u) - f(x_2,u)}_X \leq L(r)\norm{x_1 - x_2}_X.
\end{align*}
Let $h\colon X \to Y$ be continuous. 

% \pbc{- \cite{SoW96} says, $f$ must be locally Lipschitz. Can this really be relaxed as in \cite{MiW18b}? TODO Patrick: See Andrii's book!}

%\pbc{Reference \cite[Thm. 1.43]{Mir23} added which tells that local Lipschitz continuity in the first variable is enough.}

%\mir{Is the uniformity w.r.t. the second variable required there?}

%\pbc{Local uniformity, it was badly formulated. Now it is the notion used in [Mir23, Def. 1.6 \& Thm. 1.43].}

% \pbc{- For $h\colon \R^n \to Y$ can we weaken uniform continuity to LOCAL uniform continuity? - Yes, this is possible.}

% \pbc{- $h\colon \R^n \times \R^m \to Y$ must be independent of $u$. Otherwise uniform continuity is not enough. Details for the necessary modifications can be found inside the proof of Prop. \ref{prop:OLIMforFiniteDimensions}.}

% \mir{Both sources that you cite next use the same notation for the crossing times. You should use the same notation as there. Otherwise it is hard to check.
% \\
% Btw, what is $I$?
% }

% \pbc{I know about the issue with the notation. Unfortunately we use $\tau$ for the time threshold in OLIM and OAG. To avoid a clash of notation, renaming the first crossing time is definitely less intrusive than renaming the threshold. 
% \\ 
% $I$ is the time domain $I\in \{\R_+,\N_0\}$ as given in the Control System Definition \ref{def:controlSystem}.}

\subsection{OLIM, OULIM and OGULIM on finite-dimensional spaces}
Next, we prove the equivalence of OGULIM, OULIM and OLIM on finite-dimensional spaces.

For a given $x \in X$, a set $\Phi \subset Y$, and an input $u\in\Uc$, let
\begin{align*}
        \tau^o(x, \Phi,u) \coloneq \inf\!\braces{t \geq 0 \,\middle|\, y(t,x,u) \in \Phi}
    \end{align*}
be the \emph{first crossing time} of the set $\Phi$ by the output trajectory $y(\ph,x,u)$.
 
To further proceed, we adapt \cite[Cor. 4.2]{Angeli2004a} (see also \cite[Thm. 1.43]{Mir23}).
\begin{corollary}\label{cor:finiteConvergenceTime}
    Let $\Sigma_\text{finite}$ be a forward complete system.
    Consider
    \begin{itemize}
        \item a compact set $C$ of the state space $X$,
        \item a bounded open neighborhood $\widetilde C$ of $C$,
        \item an open subset $\Phi \subset Y$,
        \item a compact subset $J \subset \Phi$,
        \item a radius $s > 0$
    \end{itemize}
    such that for any $x \in \widetilde C$ and any $u \in \overline{B_{s,\U}}$, there exists $t \in I$ such that $y(t,x,u) \in J$.
		
    Then $\Phi$ can be reached in a uniform time by all trajectories starting in $C$ and under inputs from $\overline{B_{s,\U}}$, that is:
    \begin{align*}
        \sup\nolimits_{x \in C,\ u \in \overline{B_{s,\U}}} \braces{\tau^o(x, \Phi,u)} < \infty.
    \end{align*}       
    % Then, there is some point $q \in W$ and some $v \in \overline{B_{s,\U}}$ such that for all $t \in I$ it holds that
    % \begin{align*}
    %     \sup_{w \in \Omega^C}\!\braces{\norm{\phi(t,q,v) - w}_X} \leq \delta(t),
    % \end{align*}
    % where $\Omega^C$ denotes the complement of $\Omega$.
\end{corollary}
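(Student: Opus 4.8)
The plan is to adapt the argument behind \cite[Cor. 4.2]{Angeli2004a}: it is a compactness argument resting on three ingredients, namely the topological ``slack'' between $J$ and $\Phi$ and between $C$ and $\widetilde C$; boundedness of reachable sets over bounded time horizons, which is available here because $\Sigma_\text{finite}$ is a \emph{finite dimensional} forward complete system; and continuous dependence of trajectories on the initial state via Gronwall's inequality.

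First I would fix the slack and the bounded region. Since $h$ is continuous, $\Phi \subset Y$ is open and $J \subset \Phi$ is compact, there is $\eps > 0$ with $\overline{B_\eps(J)} \subset \Phi$, the $\eps$-neighbourhood being taken in $Y$; and since $C$ is compact and $\widetilde C$ an open neighbourhood of $C$, I fix a compact set $\widetilde C_0$ with $C$ in its interior and $\widetilde C_0 \subset \widetilde C$. Then, for each $T > 0$, the reachable set $R_T := \{\phi(t,x,u)\,:\,x \in \widetilde C_0,\ \norm{u}_\U \le s,\ 0 \le t \le T\}$ is bounded, because $\Sigma_\text{finite}$ is finite dimensional and forward complete; this is the classical ODE property that fails in infinite dimensions and can be obtained along the lines of \cite[Lem. 2.12]{Mironchenko2018}. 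On the compact closure $\overline{R_T}$ the map $f$ is bounded, and Lipschitz in $x$ with some constant $L_T$, uniformly over inputs in $\overline{B_{s,\U}}$, so Gronwall gives $\norm{\phi(t,x,u) - \phi(t,x',u)}_X \le \norm{x-x'}_X e^{L_T t}$ for all $x,x' \in \widetilde C_0$, $u \in \overline{B_{s,\U}}$, $t \in [0,T]$. Combined with uniform continuity of $h$ on $\overline{R_T}$, this shows the family $\{\phi(\ph,x,u)|_{[0,T]}\,:\,x \in \widetilde C_0,\ u \in \overline{B_{s,\U}}\}$ is uniformly bounded and equi-Lipschitz, hence relatively compact in $C([0,T];\R^n)$ by Arzel\`a--Ascoli; a diagonal argument in $T$ makes it relatively compact in $C_{\mathrm{loc}}([0,\infty);\R^n)$.

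Then I would argue by contradiction. Suppose $\sup_{x \in C,\, u \in \overline{B_{s,\U}}} \tau^o(x,\Phi,u) = \infty$; pick $x_k \in C$ and $u_k \in \overline{B_{s,\U}}$ with $y(t,x_k,u_k) \notin \Phi$ for all $t \in [0,k]$, so that $\inf_{z \in J}\norm{y(t,x_k,u_k)-z}_Y \ge \eps$ on $[0,k]$ because $\overline{B_\eps(J)} \subset \Phi$. Passing to subsequences, $x_k \to x^* \in C \subset \widetilde C$ and $\phi(\ph,x_k,u_k) \to \xi$ locally uniformly on $[0,\infty)$. By standard results on ordinary differential equations, $\xi$ is a trajectory of $\Sigma_\text{finite}$ with $\xi(0) = x^*$ under some input $u^* \in \overline{B_{s,\U}}$ (when $f(x,\ph)$ is not affine one argues through the convexified differential inclusion, whose solution set is closed under uniform limits, and checks that reaching the \emph{open} set $\Phi$ carries over). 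Letting $k \to \infty$ yields $\inf_{z \in J}\norm{h(\xi(t))-z}_Y \ge \eps > 0$, hence $y(t,x^*,u^*) = h(\xi(t)) \notin J$ for every $t \in I$, contradicting the hypothesis applied to $x^* \in \widetilde C$ and $u^* \in \overline{B_{s,\U}}$. Therefore the supremum is finite.

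The step I expect to be the main obstacle is identifying the limit curve $\xi$ with an honest trajectory of $\Sigma_\text{finite}$ driven by an admissible input, and, more broadly, extracting uniformity over the \emph{non-compact} input ball $\overline{B_{s,\U}}$. This is precisely where finite dimensionality and the structure of $\overline{B_{s,\U}}$ as a set of measurable selections into the compact ball $\overline{B_s} \subset \R^m$ enter; it is the technical core we import from \cite{Angeli2004a}, the remaining reductions (slack, Gronwall, Arzel\`a--Ascoli, bookkeeping) being routine.
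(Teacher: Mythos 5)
The paper never proves this corollary: it is imported, by adaptation, from \cite[Cor.~4.2]{Angeli2004a} (see also \cite[Thm.~1.43]{Mir23}), so what you are attempting is a reconstruction of that imported argument. Your preparatory steps are fine -- the $\eps$-slack between the compact set $J$ and the open set $\Phi$, boundedness of finite-time reachable sets for forward complete ODE systems (via \cite{Lin1996}, \cite{Mironchenko2018}), Gronwall, Arzel\`a--Ascoli, and the observation that the avoidance estimate $\inf_{z\in J}\norm{y(t,x_k,u_k)-z}_Y\ge\eps$ survives the locally uniform limit. The genuine gap is precisely the step you name and then defer back to the citation: identifying the limit curve $\xi$ of $\phi(\cdot,x_k,u_k)$ with a trajectory of $\Sigma_\text{finite}$ driven by a single admissible input $u^*\in\overline{B_{s,\U}}$. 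For a general $f$ this is simply not true: the set $f(x,\overline{B_s})$ need not be convex, so the limit is only a solution of the convexified inclusion $\dot\xi\in\overline{\mathrm{co}}\,f\bigl(\xi,\overline{B_s}\bigr)$, i.e.\ a relaxed trajectory, and the hypothesis of the corollary says nothing about relaxed trajectories. Hence the final line ``contradicting the hypothesis applied to $x^*\in\widetilde C$ and $u^*\in\overline{B_{s,\U}}$'' is not justified.

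Nor is the repair as routine as your parenthetical suggests. The Filippov--Wa\.zewski relaxation theorem approximates $\xi$ by genuine trajectories with the same initial state only uniformly on compact intervals $[0,T]$; the hypothesis guarantees that each such genuine approximant reaches $J$ at \emph{some} finite time, but that time may exceed $T$, so finite-horizon closeness does not let you transfer ``the output eventually reaches $J$'' (or even ``eventually reaches $\Phi$'') to the relaxed limit, which is what your contradiction needs. Closing this transfer is the actual technical content of \cite[Cor.~4.2]{Angeli2004a}; note also that your argument never uses the open neighbourhood $\widetilde C$ of $C$ beyond $C\subset\widetilde C$, a sign that the mechanism of the cited proof is structured differently from your sketch. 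As written, the core of the corollary is still being imported from \cite{Angeli2004a}/\cite{Mir23} rather than proved, which is exactly what the paper itself does by citation, so the proposal does not yet constitute an independent proof of the statement.
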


Now we are ready to state the next proposition:
% \begin{proposition}\label{prop:OLIMforFiniteDimensions}
%     For finite-dimensional systems $\Sigma_\text{finite}$, the following are equivalent:
%     \begin{enumerate}
%         \item\label{cond:OLIM_2} $\Sigma_\text{finite}$ is OULIM.
%         \item\label{cond:OLIM_3} $\Sigma_\text{finite}$ is OLIM.
%     \end{enumerate}
%     If in addition $h$ is $\KK$-bounded %\eqref{eq:uniformOutputBound} holds 
%     (in particular, if $\Sigma_\text{finite}$ has full-state output%$h = h(x)$
%     ), then the above statements are equivalent to 
% \begin{enumerate}
% \setcounter{enumi}{2}
%         \item\label{cond:OLIM_1} $\Sigma_\text{finite}$ is OGULIM.
% \end{enumerate}
% \end{proposition}

\begin{proposition}
\label{prop:OLIMforFiniteDimensions}
    For finite-dimensional systems $\Sigma_\text{finite}$, the following are equivalent:
    \begin{enumerate}
        \item\label{cond:OLIM_1} $\Sigma_\text{finite}$ is OGULIM.
        \item\label{cond:OLIM_2} $\Sigma_\text{finite}$ is OULIM.
        \item\label{cond:OLIM_3} $\Sigma_\text{finite}$ is OLIM.
    \end{enumerate}
\end{proposition}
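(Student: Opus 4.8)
The plan is to close the cycle \ref{cond:OLIM_1}$\implies$\ref{cond:OLIM_2}$\implies$\ref{cond:OLIM_3}$\implies$\ref{cond:OLIM_1}, of which only the last implication carries content. For \ref{cond:OLIM_1}$\implies$\ref{cond:OLIM_2}: the bound $\tau=\tau(\eps,r)$ furnished by OGULIM is in particular a bound $\tau(\eps,r,s)$ that happens not to depend on $s$, and ``for all $u\in\U$'' covers ``for all $u\in B_{s,\U}$'', so OGULIM is formally a special case of OULIM. For \ref{cond:OLIM_2}$\implies$\ref{cond:OLIM_3}: given $\eps>0$, $x\in X$ and $u\in\U$, apply OULIM with the parameters $r\coloneq\norm{x}_X+1$ and $s\coloneq\norm{u}_\U+1$; this produces a time $t\le\tau(\eps,r,s)$ realizing the OLIM estimate $\norm{y(t,x,u)}_Y\le\eps+\gamma(\norm{u}_\U)$. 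The gain $\gamma$ is unchanged in both of these steps.

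For \ref{cond:OLIM_3}$\implies$\ref{cond:OLIM_1} I would argue OLIM$\implies$OULIM$\implies$OGULIM. The second arrow is Lemma~\ref{lem:OULIMtoOGULIM}: a continuous $h\colon\R^n\to Y$ is bounded on bounded sets, since it maps the compact ball $\overline{B_C}$ onto a bounded subset of $Y$ for every $C>0$, and then OULIM$\iff$OGULIM by that lemma. To get OLIM$\implies$OULIM I would use the finite-convergence-time result Corollary~\ref{cor:finiteConvergenceTime} together with the equivalent ``$\eps+\gamma(s)$'' formulation of OULIM from the first part of Lemma~\ref{lem:OULIMtoOGULIM}. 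Fix $\eps,r,s>0$ and set $C\coloneq\overline{B_r}$ (compact in $\R^n$), $\widetilde C\coloneq B_{r+1}$ (a bounded open neighbourhood of $C$), $\Phi\coloneq\braces{y\in Y\,\middle|\,\norm{y}_Y<\eps+\gamma(s)}$ (open in $Y$) and $J\coloneq\braces{y\in Y\,\middle|\,\norm{y}_Y\le\tfrac{\eps}{2}+\gamma(s)}$. Applying OLIM with $\tfrac{\eps}{2}$ in place of $\eps$ and using $\gamma(\norm{u}_\U)\le\gamma(s)$ for $u\in\overline{B_{s,\U}}$ shows that every trajectory starting in $\widetilde C$ under an input from $\overline{B_{s,\U}}$ satisfies $y(t,x,u)\in J$ at some time. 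Corollary~\ref{cor:finiteConvergenceTime} then delivers $\sup_{x\in C,\,u\in\overline{B_{s,\U}}}\tau^o(x,\Phi,u)=:T(\eps,r,s)<\infty$, so that for every $x\in B_r$ and $u\in B_{s,\U}$ there is $t<T(\eps,r,s)+1$ with $\norm{y(t,x,u)}_Y<\eps+\gamma(s)$ --- exactly the alternative OULIM estimate. Hence $\Sigma_\text{finite}$ is OULIM (with a possibly enlarged $\Kinf$-gain) and therefore OGULIM, closing the cycle.

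The main obstacle is verifying the hypotheses of Corollary~\ref{cor:finiteConvergenceTime}, chiefly that the target ball $J$ is a \emph{compact} subset of the output space $Y$. When $Y$ is finite dimensional (as in \cite{ISW01}, where $Y=\R^p$) this is automatic; in general one replaces $J$ by its intersection with $h\!\para{\overline{B_{M}}}$, where $M$ bounds the state reachability set of $\Sigma_\text{finite}$ over $\widetilde C$, inputs from $\overline{B_{s,\U}}$ and times below a candidate horizon. This image is compact by continuity of $h$, and the apparent circularity --- one seems to need the horizon to bound the intermediate states --- is absorbed by the buffer $\widetilde C\supsetneq C$ built into the corollary. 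This bounded-reachability property, which holds for forward complete ODE systems but fails in general, is precisely the finite-dimensional ingredient that makes the proposition true while its infinite-dimensional analogue does not hold (cf.\ Section~\ref{sec:counterexamples}).
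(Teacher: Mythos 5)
Your proposal is correct and takes essentially the same route as the paper: the two easy implications directly from the definitions, OULIM$\iff$OGULIM via Lemma~\ref{lem:OULIMtoOGULIM} (continuity of $h$ on $\R^n$ giving boundedness on bounded sets), and OLIM$\implies$OULIM by applying Corollary~\ref{cor:finiteConvergenceTime} with $C=\overline{B_r}$, a bounded open neighbourhood of it, $\Phi=\braces{y\,:\,\norm{y}_Y<\eps+\gamma(s)}$, $J=\braces{y\,:\,\norm{y}_Y\le\tfrac{\eps}{2}+\gamma(s)}$, followed by the alternative ``$\eps+\gamma(s)$'' characterization of OULIM. Your closing remark on the compactness of $J$ addresses a point the paper passes over silently, since its own proof also takes $J$ to be a closed ball in $Y$ and thus implicitly treats the output space as finite dimensional.
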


\begin{proof}
    \ref{cond:OLIM_1}$\iff$\ref{cond:OLIM_2} is a consequence of Lemma \ref{lem:OULIMtoOGULIM}.
    
    \ref{cond:OLIM_2}$\implies$\ref{cond:OLIM_3} holds by the definitions of OULIM and OLIM.

    We show \ref{cond:OLIM_3}$\implies$\ref{cond:OLIM_2}: Let $\Sigma_\text{finite}$ be OLIM with $\gamma, \tau$ as in Definition \ref{def:OLIM}.
    We fix $r,s, \eps > 0$ and define $\widetilde C \coloneq B_{2r}$, $C \coloneq \overline{B_r}$,
    \begin{align*}
        \Phi &\coloneq \braces{y \in Y \,\middle|\, \norm{y}_Y < \eps + \gamma\!\paren{s}}, \\
        J &\coloneq \braces{y \in Y \,\middle|\, \norm{y}_Y \leq \tfrac \eps 2 + \gamma\!\paren{s}}\!.
    \end{align*}
    Then, for each $x \in \widetilde C$  and $u \in \overline{B_{s,\U}}$, there exists $t = \tau\!\paren{\frac \eps 2, x,u}$ such that $y(t,x,u)\in J$.
    
    By Corollary \ref{cor:finiteConvergenceTime}, there exists a finite time 
    \begin{align*}
        \overline{\tau}(\eps,r,s) \coloneq \sup\nolimits_{x \in \overline{B_r},\ u \in \overline{B_{s,\U}}} \braces{\tau^o(x, \Phi,u)}
    \end{align*}
    such that for all $x \in \overline{B_r}$, $u \in \overline{B_{s,\U}}$, there exists $t \leq \overline{\tau}(\eps,r,s)$ such that $y(t,x,u) \in \Phi$, i.e., $\norm{y(t,x,u)}_Y \leq \eps + \gamma\!\paren{s}$. 
    
    By Lemma \ref{lem:OULIMtoOGULIM}, it follows that $\Sigma_\text{finite}$ is OULIM.
\end{proof}

\begin{remark}
    For systems with full-state output, the terminology ULIM, bULIM and LIM is used instead of OGULIM, OULIM  and OLIM, respectively \cite[Def. 2.47]{Mir23}.    
    \hspace*{\fill}~\QED
\end{remark}

\subsection{IOS on finite-dimensional spaces}
%We refer to \cite{MiW18b} for the definition of the corresponding notions.
We want to strengthen Theorem \ref{thm:IOSequivalences} and Proposition~\ref{prop:OLIOSequivalences} for ODE systems. We first give a technical lemma.

\begin{lemma}\label{lem:BORSFiniteDim}
    Let $\Sigma_\text{finite}$ be forward complete. Then, $\Sigma_\text{finite}$ is BORS.
\end{lemma}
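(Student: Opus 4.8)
The goal is to show that every forward-complete finite-dimensional ODE system $\Sigma_\text{finite}$ has bounded output reachability sets, i.e., that for each $C>0$ and each $\tau>0$ the quantity $\sup\{\|y(t,x,u)\|_Y : \|x\|_X<C,\ \|u\|_\U<C,\ t<\tau\}$ is finite. The plan is to reduce this to a bound on the \emph{state} reachability set and then push it through the output map $h$, which is continuous and hence bounded on compact sets. Concretely, the first step is to establish that the state reachability set $R_\tau(C) \coloneq \{\phi(t,x,u) : \|x\|_X< C,\ \|u\|_\U< C,\ t\in[0,\tau)\}$ is bounded in $X=\R^n$. This is the standard fact that forward-complete ODE systems with right-hand side Lipschitz (hence continuous) on bounded sets have bounded finite-time reachability sets; it follows from a Gr\"onwall-type estimate. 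Since $f$ is continuous and $f(0,0)$ may be nonzero, one writes $\|f(x,v)\|_X \le \|f(0,0)\|_X + \omega(\max\{\|x\|_X,\|v\|_X\})$ for some continuous increasing $\omega$ (using continuity of $f$ on bounded sets), then on the set where $\|\phi(t,x,u)\|_X \le M$ one gets $\frac{d}{dt}\|\phi(t,x,u)\|_X \le a + \omega(\max\{M,C\})$ for a constant $a$, and a continuity/contradiction argument (the trajectory cannot escape to infinity in finite time because the derivative of the norm stays bounded on any bounded sublevel set) yields a uniform bound $M = M(C,\tau)$ on $\|\phi(t,x,u)\|_X$ for all $t<\tau$, $\|x\|_X<C$, $\|u\|_\U<C$.

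The second step is purely soft: given the bound $\|\phi(t,x,u)\|_X \le M(C,\tau)$ and $\|u(t)\|_U \le \|u\|_\U < C$ (here using the shift-invariance axiom, which gives $\|u(t+\cdot)\|_\U \le \|u\|_\U$, so the pointwise input values stay in a bounded set — more precisely one uses that $\U = L^\infty$ so $\|u(t)\|_U \le \|u\|_\U$ for a.e.\ $t$), the output value $y(t,x,u) = h(\phi(t,x,u), u(t))$ lies in $h(\overline{B_M} \times \overline{B_{C,U}})$, a continuous image of a compact set (note $h$ here depends only on $x$ in $\Sigma_\text{finite}$, so $y(t,x,u)=h(\phi(t,x,u))$ and one only needs $h(\overline{B_M})$ compact). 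Hence $\|y(t,x,u)\|_Y \le \sup_{\|z\|_X\le M}\|h(z)\|_Y < \infty$, which is exactly BORS.

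The main obstacle is Step 1: making the Gr\"onwall/no-blow-up argument rigorous for the whole reachability set uniformly in $(x,u)$ rather than for a single trajectory. The cleanest route is a continuity argument on a fixed trajectory — define $M_0 \coloneq \sup\{s : \|\phi(r,x,u)\|_X \le R \text{ for all } r\le s\}$ for a suitably large $R$ depending only on $C$ and $\tau$ (chosen so that $R > C + \tau(a + \omega(\max\{R,C\}))$ is consistent, which is possible because forward completeness rules out the alternative), and then show $M_0 \ge \tau$; since the resulting bound $R$ depends only on $C$, $\tau$ and the growth function $\omega$ of $f$, it is automatically uniform over all admissible $(x,u)$. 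An honest alternative, and probably what the authors intend, is simply to invoke forward completeness together with a known result (e.g.\ the cited \cite[Lem.~2.12]{Mironchenko2018}-style statement, or the classical fact that forward-complete ODEs have locally bounded reachability sets) and devote the written proof mostly to Step 2. I would structure the writeup to state the state-reachability bound as the crux, give the short Gr\"onwall argument for it, and then conclude BORS in two lines via continuity of $h$.
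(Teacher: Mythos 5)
Your Step 2 (push a state bound through the continuous map $h$, using compactness of closed balls in $\R^n$) is exactly how the paper finishes, and your ``honest alternative'' for Step 1 --- invoke the classical fact that forward complete ODE systems have bounded finite-time reachability sets --- is precisely what the paper does, citing \cite[Prop.~5.1]{Lin1996}. The problem is the route you say you would actually write out: the Gr\"onwall/comparison argument for Step 1 has a genuine gap. The bound $\|f(x,v)\|_X\le a+\omega(\max\{\|x\|_X,\|v\|_U\})$ only yields, via comparison with $\dot z=a+\omega(z)$, a bound on the reachable set up to the blow-up time of that comparison equation. Since $f$ is merely Lipschitz on bounded sets, $\omega$ may grow superlinearly, and then no $R$ satisfying $R> C+\tau\bigl(a+\omega(\max\{R,C\})\bigr)$ exists once $\tau$ exceeds the comparison blow-up time: the solvability of that numerical inequality depends only on $a,\omega,C,\tau$, so ``forward completeness rules out the alternative'' is not a valid step --- forward completeness of the true system (think of $\dot x=x^2-x^3$, forward complete with superlinear crude growth bound) is achieved by cancellations that the upper bound $\omega$ discards, and it cannot repair the inequality.

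A quick sanity check shows the sketch cannot be completed along these lines: your Step 1 argument nowhere uses finite-dimensionality (only a growth bound of $f$ on bounded sets plus forward completeness), yet the conclusion ``FC $\Rightarrow$ bounded reachability sets'' is false in infinite dimensions --- the paper's Example~\ref{ex:UGATTandUASnottoBRS} is a forward complete system on $\ell^2$ with right-hand side Lipschitz on bounded balls and full-state output that is not BORS. The implication FC $\Rightarrow$ BRS really rests on compactness of balls in $\R^n$ (continuity of the flow with respect to initial data plus a compactness argument), which is the content of \cite[Prop.~5.1]{Lin1996}; that result, not Gr\"onwall, is the crux, and the written proof should simply cite it (or reproduce its compactness argument) and then conclude with your two-line Step 2.
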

\begin{proof}
Since $\Sigma_\text{finite}$ is forward complete, by \cite[Prop. 5.1]{Lin1996}, finite-time reachability sets of $\Sigma_\text{finite}$ are bounded. As $h$ is continuous, $\Sigma_\text{finite}$ is BORS.
%    The result immediately follows from continuity of $h$ and \cite[Thm. 1.31]{Mir23}.
\end{proof}

\begin{proposition}\label{prop:OUAGtoGOUAGFiniteDim}
    Let $\Sigma_\text{finite}$ be forward complete. $\Sigma_\text{finite}$ is OUAG if and only if it is OGUAG.
\end{proposition}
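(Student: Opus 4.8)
The plan is to derive this directly from the OCAG characterization in Proposition \ref{prop:OCAGequivalences} together with the finite-dimensional boundedness of reachability sets. The implication OGUAG $\implies$ OUAG is immediate from the definitions (OUAG is the formally weaker property, since every $u \in B_{s,\U}$ satisfies $u \in \U$), so the content is in OUAG $\implies$ OGUAG. First I would invoke Lemma \ref{lem:BORSFiniteDim}: since $\Sigma_\text{finite}$ is forward complete, it is BORS. Hence if $\Sigma_\text{finite}$ is OUAG, then it is simultaneously OUAG and BORS, which is exactly condition \ref{cond:OCAG_1} of Proposition \ref{prop:OCAGequivalences}. By that proposition, $\Sigma_\text{finite}$ is then OGUAG (this is condition \ref{cond:OCAG_2}, which contains OGUAG, or one can go through OCAG $\implies$ OGUAG via Lemma \ref{lem:IOStoBORSandOCEPandOGUAGandOGULIM}). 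This closes the nontrivial direction.

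For the reverse direction, OGUAG $\implies$ OUAG, I would simply note that the defining condition of OGUAG — existence of $\gamma \in \KK_\infty$ such that for every $\eps, r > 0$ there is $\tau(\eps,r) \in I$ with $\norm{y(t,x,u)}_Y \leq \eps + \gamma(\norm{u}_\U)$ for all $x \in B_r$, $u \in \U$, $t \geq \tau$ — implies the OUAG condition by setting $\tau(\eps,r,s) \coloneq \tau(\eps,r)$, which in particular works for all $u \in B_{s,\U} \subset \U$. No finite-dimensionality is needed here; it holds for arbitrary forward complete systems with outputs.

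The only genuine obstacle is ensuring that the hypotheses of Proposition \ref{prop:OCAGequivalences} are met, and this reduces entirely to establishing BORS, which Lemma \ref{lem:BORSFiniteDim} already provides via \cite[Prop. 5.1]{Lin1996} (finite-time reachability sets of forward complete ODE systems are bounded) and continuity of $h$. In the infinite-dimensional setting this step fails — forward complete systems need not have bounded reachability sets — which is precisely why Example \ref{ex:OUAGnottoOGUAG} shows OUAG $\nimplies$ OGUAG there; so the proposition is really a statement that the finite-dimensional phenomenon of bounded reachability sets is what makes the two notions collapse. I would therefore write the proof in two or three lines: OGUAG $\implies$ OUAG trivially; conversely OUAG plus BORS (Lemma \ref{lem:BORSFiniteDim}) gives OCAG by Proposition \ref{prop:OCAGequivalences}, and OCAG $\implies$ OGUAG by Lemma \ref{lem:IOStoBORSandOCEPandOGUAGandOGULIM}.
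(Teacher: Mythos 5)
Your proposal is correct and follows exactly the paper's route: the paper's proof is precisely the one-line invocation of Lemma \ref{lem:BORSFiniteDim} (forward completeness of $\Sigma_\text{finite}$ gives BORS) together with Proposition \ref{prop:OCAGequivalences}, with the converse direction OGUAG $\implies$ OUAG being immediate from the definitions. Your added remarks on why this fails in infinite dimensions match the paper's discussion around Example \ref{ex:OUAGnottoOGUAG}, so there is nothing to change.
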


\begin{proof}
    The claim follows by Lemma~\ref{lem:BORSFiniteDim} and Proposition \ref{prop:OCAGequivalences}.
\end{proof}
In spite of the equivalence between OUAG and OGUAG for finite-dimensional systems shown in Proposition \ref{prop:OUAGtoGOUAGFiniteDim}, the optimal gain for the OUAG property may not be a gain for the OGUAG property, see \cite[Example 2.46]{Mir23} even for ODE systems with full-state output.

Now we are ready to prove \pb{a result similar to \cite[Thm. 1]{ISW01}:}
\begin{proposition}\label{prop:IOSsuperpositionsFiniteDim}
    Let $\Sigma_\text{finite}$ be a forward complete ODE system. Let $h(0) = 0$. Then, each of the following holds true:
    \begin{enumerate}
        \item\label{itm:IOSsuperpositionsFiniteDim1} $\Sigma_\text{finite}$ is OLIM and OL$\iff${}$\Sigma_\text{finite}$ is OGUAG and OL.
        \item\label{itm:IOSsuperpositionsFiniteDim2} If $\Sigma_\text{finite}$ satisfies $f(0,0)  = 0$, then it is OUAG if and only if it is IOS.
        % \item\label{itm:IOSsuperpositionsFiniteDim3} $\Sigma_\text{finite}$ is OAG if and only if it is OALS.
%        \item\label{itm:IOSsuperpositionsFiniteDim3} $\Sigma_\text{finite}$ is OUAG if and only if it is OGUAG.
    \end{enumerate}
\end{proposition}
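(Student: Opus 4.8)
The plan is to obtain both equivalences by combining the superposition machinery of Section~\ref{sec:superpositionThms} with the finite-dimensional facts already established (Proposition~\ref{prop:OLIMforFiniteDimensions}, Proposition~\ref{prop:OUAGtoGOUAGFiniteDim}, Lemma~\ref{lem:BORSFiniteDim}); only two ingredients are genuinely new. First, on $X=\R^n$, continuity of $h$ together with $h(0)=0$ already makes $h$ a $\KK$-bounded map: the function $\omega(r)\coloneq\sup_{\norm{x}_X\le r}\norm{h(x)}_Y$ is finite for each $r$ (closed balls in $\R^n$ are compact), nondecreasing, continuous, and $\omega(0)=0$, so one picks $\sigma_1\in\KK_\infty$ with $\sigma_1\ge\omega$ and obtains $\norm{h(x)}_Y\le\sigma_1(\norm{x}_X)$ for all $x\in X$. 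Second, $f(0,0)=0$ forces OCEP, which I would establish through a Gronwall/continuation estimate described below.

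For item~\ref{itm:IOSsuperpositionsFiniteDim1}, the direction ``$\Leftarrow$'' is immediate from the definitions: if $\Sigma_\text{finite}$ is OGUAG with gain $\gamma$, then for every $\eps>0$, $x\in X$, $u\in\U$ the choice $t=\tau(\eps,\norm{x}_X+1)$ yields $\norm{y(t,x,u)}_Y\le\eps+\gamma(\norm{u}_\U)$, so OGUAG$\implies$OLIM, and OL is carried along unchanged. For ``$\Rightarrow$'', assume $\Sigma_\text{finite}$ is OLIM and OL. By Proposition~\ref{prop:OLIMforFiniteDimensions} it is then OULIM; together with $\KK$-boundedness of $h$ from the previous paragraph, Proposition~\ref{prop:OLIOSequivalences} (implication \ref{itm:OLIOSequivalences3}$\implies$\ref{itm:OLIOSequivalences2}) gives that $\Sigma_\text{finite}$ is OUAG and OL, and Proposition~\ref{prop:OUAGtoGOUAGFiniteDim} upgrades OUAG to OGUAG. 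Hence $\Sigma_\text{finite}$ is OGUAG and OL.

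For item~\ref{itm:IOSsuperpositionsFiniteDim2}, ``IOS$\implies$OUAG'' is Theorem~\ref{thm:IOSequivalences}, \ref{itm:thmEquivStatement1}$\implies$\ref{itm:thmEquivStatement2}. For the converse I would invoke Theorem~\ref{thm:IOSequivalences}, \ref{itm:thmEquivStatement2}$\implies$\ref{itm:thmEquivStatement1}: BORS holds by Lemma~\ref{lem:BORSFiniteDim}, OUAG is assumed, so only OCEP is missing. Since $f(0,0)=0$, the zero function solves the equation, i.e.\ $\phi(\ph,0,0)\equiv0$. As long as the trajectory remains in $B_1$ one has $\norm{f(x,v)}_X\le L(1)\norm{x}_X+m(\norm{v}_U)$ with $m(\delta)\coloneq\sup_{\norm{v}_U\le\delta}\norm{f(0,v)}_X\to0$ as $\delta\to0$ (using continuity of $f$ at the origin); Gronwall's inequality then yields $\norm{\phi(t,x,u)}_X\le(\delta+\tau\,m(\delta))e^{L(1)\tau}$ whenever $\norm{x}_X,\norm{u}_\U\le\delta$ and $t\le\tau$, and a standard continuation argument shows the trajectory cannot leave $B_1$ on $[0,\tau]$ once $\delta$ is small enough. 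Letting $\delta\to0$ shows $\phi(t,x,u)\to0$ uniformly on $[0,\tau]$; since $h$ is continuous with $h(0)=0$, this gives, for every $\tau\in I$ and $\eps>0$, a $\delta>0$ with $\norm{x}_X\le\delta$, $\norm{u}_\U\le\delta$, $t\le\tau$ implying $\norm{y(t,x,u)}_Y=\norm{h(\phi(t,x,u))}_Y\le\eps$, which is OCEP. Theorem~\ref{thm:IOSequivalences} then delivers IOS.

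I expect the OCEP verification in item~\ref{itm:IOSsuperpositionsFiniteDim2} to be the only non-routine step; it rests on the Gronwall/continuation estimate above, which uses joint continuity of $f$ — in particular continuity of $f(0,\ph)$ at $0$, needed so that the zero input's effect vanishes — beyond the bare ``Lipschitz-in-$x$-on-bounded-sets'' hypothesis stated for $\Sigma_\text{finite}$. I would make this continuity assumption explicit (it is standard in the ODE IOS literature, cf.\ \cite{ISW01,Sontag1999}). With it in place, the remainder is just assembly of results already proved, so the write-up is short.
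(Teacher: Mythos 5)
Your proposal is correct and follows the paper's decomposition almost verbatim: item 1) via Proposition \ref{prop:OLIMforFiniteDimensions}, the $\KK$-boundedness of $h$ (continuity plus $h(0)=0$ on $\R^n$), Proposition \ref{prop:OLIOSequivalences} and Proposition \ref{prop:OUAGtoGOUAGFiniteDim}; item 2) via Lemma \ref{lem:BORSFiniteDim}, a direct verification of OCEP, and Theorem \ref{thm:IOSequivalences}.

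The only place you diverge is the OCEP step in item 2). The paper cites \cite[Thm.\ 1.40]{Mir23} (Lipschitz dependence of trajectories on initial states, uniformly over bounded inputs and bounded time) and from $\phi(\ph,0,0)\equiv 0$ concludes $\norm{\phi(t,x,u)}_X \leq C\norm{x}_X$ for $x\in B_r$, $u\in B_{r,\U}$, $t\le\tau$, whereas you run a self-contained Gronwall/continuation estimate with the extra input term $m(\delta)=\sup_{\norm{v}_U\le\delta}\norm{f(0,v)}_X$. Your version is slightly longer but more transparent about what is needed for small \emph{nonzero} inputs: the Lipschitz-in-initial-state bound alone compares $\phi(t,x,u)$ with $\phi(t,0,u)$, and some continuity of $f(0,\ph)$ at $0$ (or $f(0,u)=0$ for small $u$) is needed so that $\phi(t,0,u)$ stays small — the same mild regularity that the cited result assumes as a standing hypothesis and that you rightly propose to state explicitly, while the paper's two-line argument leaves it implicit. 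Apart from this presentational point, the two proofs are the same; your assembly of the remaining implications (OGUAG$\implies$OLIM trivially, and OUAG upgraded to OGUAG by Proposition \ref{prop:OUAGtoGOUAGFiniteDim} in item 1)) is exactly what the paper intends.
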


% \mir{Wasn't the second part proved by Sontag somewhere?}

% \pbc{In \cite[Thm. 1 (3)]{ISW01}, the equivalence with the global asymptotic gain property OGUAG$\iff$IOS was proven, \cite[Thm. 1 (1)\&(2)]{ISW01} are merged in Prop IV.6 1). I wrote this again though it was written in the introduction of this section IV and in the next remark. Maybe we should strengthen the remark in the sense that we only require OUAG, not OGUAG.}

% \mir{Do as you find right.}

\begin{proof}
    We start with \ref{itm:IOSsuperpositionsFiniteDim1}: As $h$ is $\KK$-bounded by continuity of $h$ and $h(0) = 0$, Proposition \ref{prop:OLIOSequivalences} implies OUAG $\land$ OL$\iff$ OULIM $\land$ OL. The equivalence of OLIM and OULIM follows by Proposition \ref{prop:OLIMforFiniteDimensions}.

%    \mir{Zeigen wir CEP 2mal demnaechst?}

%    \pbc{Ich habe es nochmal überarbeitet: OCEP folgt aus der Trajektorie $\Phi(\ph,0,0) \equiv 0$  und der Regularität von $h$. Dann kann man Thm. \ref{thm:IOSequivalences} direkt anwenden.}

    Next, we show \ref{itm:IOSsuperpositionsFiniteDim2}: $\Sigma_\text{finite}$ is BORS by Lemma \ref{lem:BORSFiniteDim} %By Theorem \ref{lem:IOStoBORSandOCEPandOGUAGandOGULIM}, IOS implies OCEP. 
    and from $f(0,0)  = 0$ follows OCEP: $\phi(\ph,0,0) \equiv 0$ is a trajectory and by \cite[Thm. 1.40]{Mir23}, the trajectories of $\Sigma_\text{finite}$ are Lipschitz continuous with respect to initial states, in particular, for all $r, \tau > 0$ there exists $C = C(r,\tau) > 0$ such that for all $x \in B_r$, $u \in B_{r,\U}$ and $t \in I\colon t \leq \tau$, it holds that
    $\norm{\phi(t,x,u)}_X \leq C \norm{x}_X$.
    OCEP then follows from continuity of $h$. Hence, \ref{itm:IOSsuperpositionsFiniteDim2} follows from Theorem \ref{thm:IOSequivalences}.
    % OGUAG and BORS imply OCAG by continuity of $h$ and \cite{SoW96}and Lipschitz continuity on bounded subsets of $f$ which ensures continuity of the transition map $\phi = \phi(t,x,u)$ with respect to $t$, $x$ and $u$ as well as boundedness for bounded $t$, $x$ and $u$. Then, \ref{itm:IOSsuperpositionsFiniteDim2} follows from Theorem \ref{thm:IOSequivalences}.
    %
    % Last, \ref{itm:IOSsuperpositionsFiniteDim3} directly follows from the cocycle property \ref{cond:cocycleProperty}. 
    %
    % \pbc{I do not think this is relevant. Should we neglect it or should I write a lemma for the infinite-dimensional case?}
    %
    % \mir{Ja, OALS sollen wir nicht diskutieren.}
    %
    % \pbc{Ok, dann lasse ich es so.}
\end{proof}

% \begin{remark}
%     Note that in \cite[Thm. 1]{ISW01}, there is also proven the equivalence OL $\land$ OLIM$\iff$OL $\land$ OAG. This statement can not be deduced from the infinite-dimensional theory as it does not hold true in general for infinite-dimensional state spaces even with full-state output. To see this, we refer to \cite[Lem. 9]{MiW18b}. There, it is shown sAG $\land$ UGS $\nimplies$ LISS. As sAG implies OAG and UGS implies OCEP, BORS (Lem. \ref{lem:IOStoBORSandOCEPandOGUAGandOGULIM}) and OL (stability on the output-value space is equivalent to stability on the state space), the above claim follows. \hspace*{\fill}~\QED
% \end{remark}

% \pbtodo{I cannot follow the argumentation in the above remark anymore.}

% \mir{We can just comment it.}

% \pbc{I think it is obsolete anyways. Prop. IV.6 says the exact opposite because OGUAG implies OAG.}

\begin{remark}
    Compared to \cite[Thm. 1]{ISW01}\pb{, Proposition \ref{prop:IOSsuperpositionsFiniteDim}} holds for a more general class of systems as we do not require $f$ to be locally Lipschitz continuous (in both variables) but only Lipschitz continuous in the first variable on bounded subsets and %the output may also depend on the control. 
    \pb{OUAG is sufficient to imply IOS as opposed to OGUAG in \cite[Thm. 1]{ISW01}.}

    % \mir{Well, now we can't claim that so easily. In fact, we consider the piecewise continuous inputs in the main part, and here they are Linfty, thus we should very carefully talk about that. I would not then make too many comparisons here, as we have not proved the results in a clean way. Probably Sontag and Wang could also state the results for $h$ depending on the input, but they have seen that it becomes more cumbersome.}

    % \pbc{I agree to this remark has to be changed. The term "our proposition" should refer to Prop. \ref{prop:IOSsuperpositionsFiniteDim}, right? Then I would say that the last claim is even wrong because $h$ in $\Sigma_\text{finite}$ is independent of $u$. Due to this, it satisfies the stronger boundedness on bounded sets condition $h(x,w) \leq \sigma(\norm x)+c$ uniformly in $w \in U$. Therefore, we do not have to restrict to piecewise continuous inputs. To make a long story short, I would change the previous paragraph by just dropping "and the output may also depend on the control" or replace it by "... and OUAG is sufficient to imply IOS as opposed to OGUAG in \cite[Thm. 1]{ISW01}.}

    % \mir{Sounds reasonable.}
    % \pbc{Done.}
    
    Note that Theorem \ref{thm:IOSequivalences} can be applied to forward complete ODE systems whose $f$ is not necessarily Lipschitz continuous. However, in this case the simplifications which we have in Proposition \ref{prop:IOSsuperpositionsFiniteDim} %\ref{prop:OLIOSequivalences} 
    do not occur in general.
    %, we give an even more general result than in Proposition \ref{prop:IOSsuperpositionsFiniteDim}, \ref{itm:IOSsuperpositionsFiniteDim1}: It is sufficient if $f$ generates a unique, forward complete transition map and $h$  is $\KK$-bounded. Furthermore, Theorem \ref{thm:IOSequivalences} provides a more general result than Proposition \ref{prop:IOSsuperpositionsFiniteDim}, \ref{itm:IOSsuperpositionsFiniteDim2} even for finite-dimensional systems as $f$ and $h$ are only required to generate a unique, forward complete transition map such that OCEP and BORS are satisfied. 
    \hspace*{\fill}~\QED
\end{remark}

\section{Characterizations of ISS}
\label{sec:CharacterizationOfISS}

\subsection{ISS superposition theorem}

As a corollary of Theorem \ref{thm:IOSequivalences}, we obtain the ISS superposition theorem proved in \cite[Thm. 5]{MiW18b}. \pb{By this, we show that Theorem \ref{thm:IOSequivalences} and Proposition \ref{prop:OLIOSequivalences} provide a strict generalization of the results for systems with full-state output. Moreover, the proof demonstrates how several stability notions simplify in this special case.} We refer to \cite{MiW18b} for the definitions of the corresponding notions.
\begin{corollary}[ISS superposition theorem]\label{cor:ISSsuperpositions}
    Consider a system $\Sigma$ with full-state output. %, whose output equals its state, i.e., $h(x,u) = x$.    
    Then the following statements are equivalent:
    \begin{enumerate}
        \item\label{itm:corISSsup1} $\Sigma$ is ISS.
        \item\label{itm:corISSsup2} $\Sigma$ is UAG $\land$ CEP $\land$ BRS.
        \item\label{itm:corISSsup3} $\Sigma$ is ULIM $\land$ UGS.
        \item\label{itm:corISSsup4} $\Sigma$ is ULIM $\land$ ULS $\land$ BRS.
    \end{enumerate}
\end{corollary}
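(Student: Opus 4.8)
The plan is to obtain the corollary as the full-state specialization of the results of Section~\ref{sec:superpositionThms}. Fix a system $\Sigma$ with full-state output, i.e.\ $Y=X$, $h(x,u)\equiv x$ and $y(t,x,u)=\phi(t,x,u)$. By Remark~\ref{rem:ISSvsIOS}, IOS is then ISS; moreover, as recorded in the text, OULS and local OL both become ULS, OUGS and OL both become UGS, OUGB becomes UGB, BORS becomes BRS, OCEP becomes CEP, while OGULIM, OULIM, OLIM become ULIM, bULIM, LIM and the various forms of OGUAG/OUAG become UAG. Two more identifications are needed, and both are immediate from the definitions together with Lemma~\ref{lem:OOULIMglobal}: since $y(0,x,u)=x$, the condition ``$y(0,x,u)\in B_{r,Y}$'' simply reads ``$x\in B_r$'', so OBORS reduces to BRS and OOULIM reduces to OGULIM $=$ ULIM (the convergence time in OOULIM being already independent of the input bound). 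With this dictionary in place the proof is a short chase of implications.

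The equivalence \ref{itm:corISSsup1}$\iff$\ref{itm:corISSsup2} is precisely statements \ref{itm:thmEquivStatement1}$\iff$\ref{itm:thmEquivStatement2} of Theorem~\ref{thm:IOSequivalences} read for full-state output; here one may use freely that OUAG and OGUAG coincide in the presence of BORS (combine Proposition~\ref{prop:OCAGequivalences} with Lemma~\ref{lem:IOStoBORSandOCEPandOGUAGandOGULIM}), so it is irrelevant which variant of UAG is intended. For \ref{itm:corISSsup1}$\Rightarrow$\ref{itm:corISSsup3} I would invoke Lemma~\ref{lem:IOStoBORSandOCEPandOGUAGandOGULIM}: ISS $\Rightarrow$ OCAG $\Rightarrow$ OGUAG $\Rightarrow$ OGULIM $=$ ULIM, and ISS $\Rightarrow$ OUGS $=$ UGS. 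For \ref{itm:corISSsup3}$\Rightarrow$\ref{itm:corISSsup4} I would again use Lemma~\ref{lem:IOStoBORSandOCEPandOGUAGandOGULIM}: UGS $\Rightarrow$ ULS and UGS $\Rightarrow$ OUGB $\Rightarrow$ BRS.

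The implication that uses the genuinely infinite-dimensional machinery, and the one I expect to be the crux, is \ref{itm:corISSsup4}$\Rightarrow$\ref{itm:corISSsup1}. First, applying Lemma~\ref{lem:OOULIMandLocalOLandOBORStoGlobalOL} with the identifications OGULIM $=$ OOULIM, ULS $=$ local OL and BRS $=$ OBORS, the hypothesis ULIM $\wedge$ ULS $\wedge$ BRS yields OL $=$ UGS, i.e.\ \ref{itm:corISSsup4}$\Rightarrow$\ref{itm:corISSsup3}. Then, since OGULIM $\Rightarrow$ OULIM (Lemma~\ref{lem:IOStoBORSandOCEPandOGUAGandOGULIM}), since UGS is exactly OL in the full-state case, and since $h(x,u)=x$ is trivially $\KK$-bounded, Proposition~\ref{prop:OLIOSequivalences} (implication \ref{itm:OLIOSequivalences3}$\Rightarrow$\ref{itm:OLIOSequivalences1}) gives that $\Sigma$ is IOS, i.e.\ ISS, which closes the cycle $\ref{itm:corISSsup1}\Rightarrow\ref{itm:corISSsup3}\Rightarrow\ref{itm:corISSsup4}\Rightarrow\ref{itm:corISSsup1}$ together with $\ref{itm:corISSsup1}\iff\ref{itm:corISSsup2}$. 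The main point to watch — and the reason the finite-dimensional argument of \cite{ISW01} does not transfer verbatim — is that in \ref{itm:corISSsup3} and \ref{itm:corISSsup4} one must keep the \emph{uniform} limit property ULIM rather than the plain limit property: it is exactly at that step that Lemma~\ref{lem:OOULIMandLocalOLandOBORStoGlobalOL} and Proposition~\ref{prop:OLIOSequivalences} (rather than their classical analogues) carry the load, while everything else is bookkeeping with the output-to-full-state dictionary.
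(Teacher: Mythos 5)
Your proposal is correct and follows essentially the same route as the paper: both reduce the corollary to the full-state specialization of Theorem~\ref{thm:IOSequivalences} (for \ref{itm:corISSsup1}$\iff$\ref{itm:corISSsup2}), Proposition~\ref{prop:OLIOSequivalences} with $h=\mathrm{id}$ trivially $\KK$-bounded (linking ISS with ULIM $\land$ UGS), Lemma~\ref{lem:OOULIMandLocalOLandOBORStoGlobalOL} under the identifications OOULIM $=$ ULIM, local OL $=$ ULS, OBORS $=$ BRS (for ULIM $\land$ ULS $\land$ BRS $\implies$ UGS), and Lemma~\ref{lem:IOStoBORSandOCEPandOGUAGandOGULIM} for the elementary implications. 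The only difference is bookkeeping: you close a cycle \ref{itm:corISSsup1}$\Rightarrow$\ref{itm:corISSsup3}$\Rightarrow$\ref{itm:corISSsup4}$\Rightarrow$\ref{itm:corISSsup1}, while the paper states \ref{itm:corISSsup1}$\iff$\ref{itm:corISSsup3} and \ref{itm:corISSsup3}$\iff$\ref{itm:corISSsup4} directly, which is immaterial.
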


\begin{proof}
    Theorem \ref{thm:IOSequivalences} states the equivalence ISS$\iff$UAG $\land$ CEP $\land$ BRS as all of these notions for systems with outputs reduce accordingly. 
    
    Next, OL defines stability on the output-value space which is equivalent to UGS for systems with full-state output. As ISS already implies UGS, Proposition \ref{prop:OLIOSequivalences} \pb{strictly generalizes} the equivalence ISS$\iff$ULIM $\land$ UGS to systems with outputs. 
    
    By Lemma \ref{lem:OOULIMandLocalOLandOBORStoGlobalOL}, we have
    \begin{center}
        OOULIM $\land$ local OL $\land$ OBORS$\implies$OL,
    \end{center}
    which for systems with full-state output reads precisely as ULIM $\land$ ULS $\land$ BRS$\implies$UGS.
    The converse implication UGS$\implies$ULS $\land$ BRS follows from Lemma \ref{lem:IOStoBORSandOCEPandOGUAGandOGULIM}. This shows the equivalence \ref{itm:corISSsup3}$\iff$\ref{itm:corISSsup4}.
\end{proof}

% \pbc{Remark on additional complexity of IOS setting as compared to ISS.}

% \mir{The next remark does not sound very convincing.}

% \begin{remark}
%     The investigation of IOS superposition theorems offers additional complexity as compared to the case of ISS as investigated in \cite{MiW18b}: Here, besides the state space, there is a separate output-value space and the two are linked with the additional operator $h$ whose properties strongly influence the behavior of the output of the system. This relation does not exist in \cite{MiW18b} as the two are identical in the sense of Remark \ref{rem:ISSvsIOS}. \hspace*{\fill}~\QED
% \end{remark}

% \pbc{I agree. Here is a new try:}

% \begin{remark}
%     IOS generalizes the ISS setting of \cite{MiW18b} in the sense of Remark \ref{rem:ISSvsIOS}. Therefore, separate state and output-value spaces must both be investigated as well as their link with the additional operator $h$. To this aim, it was necessary to develop new stability notions which consider the behavior on the output-value space and relate it to the state space behavior. Only with these stability notions at hand, the methods introduced in \cite{MiW18b} could be enhanced to systems with outputs. \hspace*{\fill}~\QED
% \end{remark}

% \mir{New try is better, but frankly speaking such informal explanations of why we are doing a great job should be in the introduction to the paper. Here they are not needed at all. But one should look - probably we already writing why what we are doing is challenging.}

\subsection{ISS as superposition of IOS and IOSS}\label{subsec:ISSbyIOSandIOSS}

Another key ISS-like property for systems with outputs is related to the notions of nonlinear detectability of control systems and to the following question: Given the past input and output signals of a system, is it possible to recover information about the current state of the system? Obtaining an estimate of the state in terms of input and output is highly relevant for controller design \cite{Praly1996}.
\pb{System $\Sigma$} is called zero-detectable \pb{\cite{Son81}}, if there is $\beta \in \KK\LL$ such that for all $x \in X$ satisfying $y(\ph, x, 0) \equiv 0$ it holds that
\begin{align*}
    \norm{\phi(t,x,0)}_X \leq \beta\!\paren{\norm{x}_X, t}\!,\qquad t\geq 0.
\end{align*}
For linear finite-dimensional systems, this property is equivalent to the classical detectability.
\\
For nonlinear systems, it is desirable to have robustness of this property with respect to variations of the input and output. 
Motivated by the ISS property, we introduce the following concept:
\begin{definition}\label{def:IOSS}
    A control system $\Sigma$ is called \emph{input/output-to-state stable (IOSS)}, if there exist $\beta \in \KK\LL$ and $\gamma_1, \gamma_2 \in \KK$ such that for all $x \in X$, all $u \in \U$, and all $t \in I$ we have 
    \begin{align} \label{ineq:IOSS}
        \norm{\phi(t, x, u)}_X
        &\leq \beta\!\paren{\norm{x}_X, t} + \gamma_1\!\paren{\norm{u|_{[0, t]}}_\U} \nonumber\\
        &\qquad+ \gamma_2\!\paren{\sup\nolimits_{s \in [0,t]}\norm{y(s,x,u)}_Y}.
    \end{align}
    % For a system without inputs this property reduces to \emph{output-to-state stability (OSS)}:
    % \begin{align*}
    %     \norm{\phi(t, x, 0)}_X \leq \beta\!\paren{\norm{x}_X, t} + \gamma\!\paren{\sup_{s \in [0,t]}\norm{y(s,x,u)}_Y}\!.
    % \end{align*}
\end{definition}

%\pbc{SoW97 says in Rem. 8 that IOSS was studied under the name "detectability" in \cite{Son89d} (a Sontag paper that was not in your bibliography) and as "strong unboundedness observability" in JTP94. However, in the Sontag paper again only state feedback systems are investigated and I have problems to follow the idea of the definition. I would maybe write it the following way:}

A variant of the IOSS property for input/output systems was considered under the name of \emph{detectability} in \cite{Son89d}. A \emph{practical} counterpart of IOSS was introduced in \cite{JTP94} by the name \emph{strong unboundedness observability}. Taking in this notion the offset constant equal to zero, we obtain precisely the IOSS concept as defined in \cite{SoW97}. %
%The OSS notion has been introduced and studied for ODE systems in \cite{SoW97}, and s
Several fundamental results in the IOSS theory have been established in \cite{KSW01}. %(though the variations of this notion have also been studied previously under other names).
IOSS extends zero-detectability in the same way as ISS extends 0-UGAS. From \eqref{ineq:IOSS}, it can be seen that IOSS systems have ISS zero dynamics (dynamics of the system obtained by choosing the input $u$ so that the output $y$ is identically 0).
Furthermore, IOSS is closely related to strict dissipativity, existence of turnpikes, and model-predictive control \cite{HoG19,Gru22}.

\pb{
The next result generalizes the equivalence \cite[Prop. 3.1]{JTP94}
\begin{align*}
    \text{ISS} \iff \text{IOS} \land \text{IOSS}
\end{align*}
for ODE systems to abstract control systems with outputs.
}

\begin{proposition}\label{prop:ISSequivIOSandIOSS}
    Let $\Sigma = (I, X, \U, \phi, Y,h)$ be a forward complete control system with outputs. Then the following statements are equivalent:
    \begin{enumerate}
        \item\label{cond:ISS_1} $\Sigma$ is ISS and $h$ is $\KK$-bounded.
        \item\label{cond:ISS_2} $\Sigma$ is IOS and IOSS.
    \end{enumerate}
\end{proposition}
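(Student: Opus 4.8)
The plan is to prove both implications directly from the definitions, using the superposition machinery only lightly. For \ref{cond:ISS_1}$\implies$\ref{cond:ISS_2}: assume $\Sigma$ is ISS with $\beta\in\KKL$, $\gamma\in\Kinf$ and that $h$ is $\KK$-bounded with rates $\sigma_1,\gamma_1\in\KK$. To get IOS, I would simply compose the output bound with the ISS estimate: for all $t\in I$,
\begin{align*}
    \norm{y(t,x,u)}_Y
    &\leq \sigma_1\!\para{\norm{\phi(t,x,u)}_X} + \gamma_1\!\para{\norm{u}_\U} \\
    &\leq \sigma_1\!\para{\beta\!\para{\norm{x}_X,t} + \gamma\!\para{\norm{u}_\U}} + \gamma_1\!\para{\norm{u}_\U},
\end{align*}
and then split $\sigma_1(a+b)\leq \sigma_1(2a)+\sigma_1(2b)$ to land on a $\KKL$-term in $\norm{x}_X$ plus a $\Kinf$-term in $\norm{u}_\U$ (enlarging the input gain to a $\Kinf$ function if necessary), which is exactly IOS. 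For IOSS, the ISS estimate $\norm{\phi(t,x,u)}_X\leq\beta(\norm{x}_X,t)+\gamma(\norm{u}_\U)$ already has the required shape: take the IOSS rates $\beta$ (same), $\gamma_1=\gamma$ acting on $\norm{u|_{[0,t]}}_\U$ — here I would invoke causality together with the restriction-invariance axiom, since $\phi(t,x,u)=\phi(t,x,u|_{[0,t]})$ and $\norm{u|_{[0,t]}}_\U\le\norm u_\U$ — and $\gamma_2\equiv 0$. So IOSS holds trivially; the output term is not even needed.

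For the converse \ref{cond:ISS_2}$\implies$\ref{cond:ISS_1}, the point is to feed the IOS output bound into the IOSS state bound. Assume $\Sigma$ is IOS with $\beta_1\in\KKL$, $\gamma\in\Kinf$, and IOSS with $\beta_2\in\KKL$, $\delta_1,\delta_2\in\KK$. First, $\KK$-boundedness of $h$ follows at once from IOS by evaluating at $t=0$: $\norm{h(x,u)}_Y=\norm{y(0,x,u)}_Y\leq\beta_1(\norm{x}_X,0)+\gamma(\norm{u}_\U)$, and $r\mapsto\beta_1(r,0)$ is $\Kinf$. For ISS, I would estimate, using that $\norm{y(s,x,u)}_Y\leq\beta_1(\norm{x}_X,0)+\gamma(\norm{u}_\U)$ uniformly in $s$ (so that $\sup_{s\in[0,t]}\norm{y(s,x,u)}_Y\leq\beta_1(\norm{x}_X,0)+\gamma(\norm{u}_\U)$) and $\norm{u|_{[0,t]}}_\U\leq\norm{u}_\U$:
\begin{align*}
    \norm{\phi(t,x,u)}_X
    &\leq \beta_2\!\para{\norm{x}_X,t} + \delta_1\!\para{\norm{u}_\U} \\
    &\qquad+ \delta_2\!\para{\beta_1\!\para{\norm{x}_X,0} + \gamma\!\para{\norm{u}_\U}}.
\end{align*}
Splitting $\delta_2(a+b)\le\delta_2(2a)+\delta_2(2b)$ gives a term $\delta_2(2\beta_1(\norm{x}_X,0))$ which is merely a $\Kinf$-bound in $\norm{x}_X$ with no decay — so this only yields UGS plus an asymptotic gain, not ISS directly. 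To upgrade to ISS I would then use the IOS $\KKL$-decay of the output: by IOS the output actually converges, so after the output has become small the IOSS estimate (applied from a shifted time via the cocycle property, as in the proof of Proposition \ref{prop:OLIOSequivalences}) forces the state to decay too. Concretely, I would combine the resulting UGS estimate with the asymptotic-gain behaviour and invoke the ISS superposition theorem of \cite{MiW18b} (i.e. Corollary \ref{cor:ISSsuperpositions} in the full-state-output case): establishing UGS and UAG for $\Sigma$ suffices. UAG comes from the fact that for $t\ge t^\ast$ with $t^\ast$ chosen (uniformly in bounded states, using the $\KKL$-rate $\beta_1$) so that $\beta_1(\norm x_X,t^\ast)\le\eps$, the output on $[t^\ast,t]$ is bounded by $\eps+\gamma(\norm u_\U)$, and then IOSS applied on the shifted trajectory controls $\norm{\phi(t,x,u)}_X$ in terms of $\norm{\phi(t^\ast,x,u)}_X$ (bounded by UGS), a decaying $\beta_2$-term, and $\delta_1,\delta_2$ evaluated at small arguments.

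The main obstacle I expect is the last upgrade step: IOS + IOSS delivers UGS and an asymptotic gain almost immediately, but getting genuine \emph{uniform} asymptotic gain — hence ISS rather than just boundedness plus convergence — requires care about the time at which the output enters a small ball. The subtlety is that this entry time must be uniform over bounded sets of initial states, which is exactly what the $\KKL$-rate in the IOS definition provides (it is uniform, unlike in the ODE setting where one has only a trajectory-wise limit property); so the infinite-dimensional IOS definition is strong enough, and the cocycle/shift-invariance bookkeeping from the proof of Proposition \ref{prop:OLIOSequivalences} can be reused verbatim. Forward completeness guarantees $\phi(t^\ast,x,u)$ is well-defined, and restriction invariance is what lets me pass from $\norm{u}_\U$ to $\norm{u|_{[0,t]}}_\U$ and back in the IOSS estimate without loss.
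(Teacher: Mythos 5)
Your proposal is correct, and the first implication (ISS together with $\KK$-bounded $h$ implies IOS and IOSS, plus recovering $\KK$-boundedness of $h$ from IOS at $t=0$) coincides with the paper's argument almost verbatim; only note that $\gamma_2\equiv 0$ is not itself a $\KK$ function, so one should phrase the IOSS bound as holding with an arbitrary $\gamma_2\in\KK$, exactly because the extra term is nonnegative. Where you genuinely diverge is the converse. The paper, following \cite[Prop. 3.1]{JTP94}, first derives the same UGS estimate you do by substituting the IOS bound into the IOSS bound, but then finishes \emph{directly}: it rewrites IOSS in time-shifted form via the cocycle property and evaluates it at $t_0=t/2$, so that both the $\beta$-term (through the UGS bound on $\norm{\phi(t/2,x,u)}_X$) and the output supremum over $[t/2,t]$ (through the IOS estimate $\beta\!\para{\norm{x}_X,t/2}+\gamma\!\para{\norm{u}_\U}$) decay as $t\to\infty$, which yields an explicit $\KK\LL$ majorant $\widetilde\beta(s,t)=\beta(2\sigma(s),\tfrac t2)+\gamma_2(2\beta(s,\tfrac t2))$ and gain $\widetilde\gamma$, i.e.\ ISS in one stroke with no superposition machinery. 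You instead establish UGS and then UAG via a uniform entry-time argument (choosing $t^*$ so that $\beta_1(r,t^*)\le\eps$, applying the shifted IOSS estimate, splitting $\delta_2(\eps+\gamma(\norm u_\U))$, re-parametrizing $\eps$, and majorizing the resulting $\KK$ gain by a $\KK_\infty$ one), and conclude via the superposition theorem (Theorem \ref{thm:IOSequivalences}, item 4, in the full-state-output case; Corollary \ref{cor:ISSsuperpositions} as literally stated needs the extra step UAG$\implies$ULIM). Both routes work: yours is modular and makes the mechanism (uniform output-entry time from the $\KK\LL$ rate of IOS) explicit, but it leans on the heavy superposition theorem and the bookkeeping you describe; the paper's half-time split is self-contained, shorter, and produces explicit rates, which is also why the paper can present it independently of Section III.
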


 %\pbc{Merging Cor. \ref{cor:ISSsuperpositions} with Prop. \ref{prop:ISSequivIOSandIOSS} only works if we additionally presume $h$ to be $\KK$-bounded. }
 
% A possible formulation could be}
% \begin{proposition}\label{prop:ISSequivIOSandIOSS}
%     Let $\Sigma = (I, X, \U, \phi, Y,h)$ be a forward complete control system with outputs. Let $h$ be $\KK$-bounded. Then the following statements are equivalent:
%     \begin{enumerate}
%         \item\label{cond:ISS_1} $\Sigma$ is ISS.
%         \item\label{cond:ISS_2} $\Sigma$ is IOS and IOSS.
%         \item $\Sigma$ is UAG $\land$ CEP $\land$ BRS.
%         \item $\Sigma$ is ULIM $\land$ UGS.
%         \item $\Sigma$ is ULIM $\land$ ULS $\land$ BRS.
%     \end{enumerate}
% \end{proposition}

% \pbc{The proofs can be just put after one another, I think.}

\begin{proof}
    % ISS & $h$ bounded => IOS
    We start with the implication \ref{cond:ISS_1}$\implies$IOS: Let $\Sigma$ be ISS where $\beta \in \KK\LL$, $\gamma \in \KK_\infty$ as in \eqref{ineq:ISS} and $h$ be a $\KK$-bounded output map with functions $\sigma_1, \gamma_1$ as defined in $\eqref{ineq:boundedOutputMap}$. Then, it follows for any $x \in X$, $u \in \U$ and $t \in I$ for the output function
    \begin{align*}
        &\norm{y(t,x,u)}_Y 
        = \norm{h(\phi(t,x,u),u)}_Y \\
        &\qquad\leq \sigma_1(\norm{\phi(t,x,u)}_X) + \gamma_1(\norm u_{\U}) \\
        &\qquad\leq \sigma_1\!\paren{\beta\!\paren{\norm{x}_X\!,t} + \gamma(\norm{u}_{\U})} + \gamma_1(\norm u_{\U}) \\
        &\qquad\leq \sigma_1\!\paren{2\beta\!\paren{\norm{x}_X\!,t}} + \sigma_1\!\paren{2\gamma(\norm{u}_{\U})} + \gamma_1(\norm u_{\U}) \\
        &\qquad= \widetilde \beta\!\paren{\norm{x}_X,t} + \widetilde\gamma(\norm{u}_{\U}),
    \end{align*}
    where we used the $\KK$-boundedness of $h$ in the second line and the ISS property of $\phi$ in the third line. Here, $\widetilde \beta \coloneq \sigma_1 \circ (2 \beta) \in \KK\LL$ and $\widetilde \gamma \coloneq \sigma_1 \circ (2 \gamma) + \gamma_1 \in \KK_\infty$, i.e., $\Sigma$ is IOS.

    % ISS => IOSS
    ISS$\implies$IOSS: We have
    \begin{align*} %\label{ineq:IOSS}
        &\norm{\phi(t, x, u)}_X \leq \beta\!\paren{\norm{x}_X\!, t} + \gamma\!\paren{\norm{u|_{[0, t]}}_\U} \\
        &\leq \beta\!\paren{\norm{x}_X\!, t} + \gamma\!\paren{\norm{u|_{[0, t]}}_\U} + \gamma_2\!\paren{\sup\nolimits_{s \in [0,t]}\!\norm{y(s,x,u)}_Y\!}
    \end{align*}
    for arbitrary $\gamma_2 \in \KK$.
    The first inequality holds due to the ISS property and causality of $\Sigma$, i.e., that for all $x \in X$, $u \in \U$ and $t \in  I$, it holds that $\phi(t, x, u) = \phi(t, x, u|_{[0,t]})$.

    % IOS => $h$ bounded
    IOS$\implies$$h$ bounded: Let $\Sigma$ be IOS with $\beta,\gamma$ as in \eqref{ineq:ISS}. For $\sigma \coloneq \beta(\ph,0)$, it holds that
    \begin{align*}
        \norm{h(x,u)}_Y =\norm{y(0,x,u)}_Y 
        &\leq \beta(\norm x_X,0) + \gamma(\norm u_{\U}) \\
        &= \sigma(\norm x_X) + \gamma(\norm u_{\U}),
    \end{align*}
    exactly as claimed.
    
    % IOSS &  IOS => ISS
    We show \ref{cond:ISS_2}$\implies$ISS: We follow the approach in \cite[Prop. 3.1]{JTP94}. We start by showing that IOSS and IOS imply uniform global stability (OUGS with the output equal to the state). Let $\Sigma$ be IOSS with $\beta, \gamma_1, \gamma_2$ as in \eqref{ineq:IOSS} and IOS with $\beta, \gamma$ as in \eqref{ineq:IOS}. W.l.o.g., we assume that $\beta$ is the same function for IOS and IOSS. Substitution of \eqref{ineq:IOS} into \eqref{ineq:IOSS} and the axiom of restriction invariance results in
    % \mir{What is $t_0$ below? Also, the next estimate is wrong.}
    % \pbc{Indeed, there should be no $t_0$ here. Time-variance is not needed in this place.}
    % \pbc{Something here is wrong. The norm on the input space is not compatible with concatenations as used here, i.e., $\norm{u|_{[0, t]}}_\U \nleq \norm{u}_\U$. Can we make this an axiom of the control space instead of concatenation? (Concatenation does not seem to be used.)}
    % \mir{Before claiming this as an axiom, you should know that for $u\in\Uc$, the restriction $u|_{[0, t]}$ is in $\Uc$. This should follow from the concatenation axiom. Actually, there are problems already with the definition of a restriction.}
    \begin{align}
        \norm{\phi(t,x,u)}_X
        &\leq \beta\!\paren{\norm{x}_X\!, t} + \gamma_1\!\paren{\norm{u|_{[0, t]}}_\U} \nonumber\\ 
        &\quad + \gamma_2\!\paren{\sup\nolimits_{t \in I}\paren{\beta\!\paren{\norm{x}_X,t} +\gamma\!\paren{\norm{u}_\U}}} \nonumber\\
        &\leq \sigma\!\paren{\norm{x}_X} + \hat \gamma\!\paren{\norm{u}_\U}\!, \label{ineq:IOSimpliesUGS}
    \end{align}
    where $\sigma \coloneq \beta(\ph,0) + \gamma_2(2\beta(\ph,0))$ and $\hat \gamma \coloneq \gamma_1 + \gamma_2 \circ (2\gamma)$.
    Next, we employ the cocycle property \ref{cond:cocycleProperty} of $\Sigma$ to achieve a time-invariant version of \eqref{ineq:IOSS}, i.e.,
    \begin{align}
        \norm{\phi(t,x,u)}_X
        &= \norm{\phi(t - t_0,\phi(t_0,x,u),u(t_0 + \ph))}_X \nonumber\\
        &\leq \beta\!\paren{\norm{\phi(t_0,x,u)}_X\!, t-t_0} + \gamma_1\!\paren{\norm{u|_{[t_0, t]}}_\U} \nonumber\\
        &\quad+ \gamma_2\!\paren{\sup\nolimits_{s \in [t_0,t]}\norm{y(s,x,u)}_Y} \label{ineq:timeInvariantIOSS}
    \end{align}
    for all $x \in X, u \in \U$ and $t,t_0 \in I\colon t \geq t_0$.
    With \eqref{ineq:timeInvariantIOSS} at hand, we are now able to bound for $t_0 \coloneq \frac t 2$
%
%     \mir{It seems that we need also that
% \[
% \norm{u|_{\brackets{\frac t 2, t}}}_\U
% \leq 
% \norm{u}_\U,
% \]
% isn't it?
% }
    \begin{align*}
        \norm{\phi(t,x,u)}_X
        &\leq \beta\!\paren{\norm{\phi\!\paren{\tfrac t 2,x,u}}_X\!, \tfrac t 2} + \gamma_1\!\paren{\norm{u|_{\brackets{\frac t 2, t}}}_\U} \\
        &\quad+ \gamma_2\!\paren{\sup\nolimits_{s \in \brackets{\frac t 2,t}}\norm{y(s,x,u)}_Y} \\
        &\leq \beta\!\paren{\sigma\!\paren{\norm{x}_X} + \hat \gamma\!\paren{\norm{u}_\U}\!, \tfrac t 2} + \gamma_1\!\paren{\norm{u}_\U}\\
        &\quad+ \gamma_2\!\paren{\beta\!\paren{\norm{x}_X,\tfrac t 2} + \gamma\!\paren{\norm{u}_\U}} \\
        &\leq \widetilde \beta\!\paren{\norm{x}_X,t} + \widetilde \gamma\!\paren{\norm{u}_\U}\!,
    \end{align*}
    where we used the axiom of restriction invariance, % in the first step and 
    \eqref{ineq:IOS} and \eqref{ineq:IOSimpliesUGS} in the second step. Here, we defined $\widetilde \beta(s,t) \coloneq \beta(2\sigma(s), \frac t 2) + \gamma_2\circ(2\beta(s,\frac t 2)) \in \KK\LL$ and $\widetilde \gamma \coloneq \beta(2 \hat \gamma,0) + \gamma_1 + \gamma_2\circ(2\gamma) \in \KK_\infty$. Hence, $\Sigma$ is ISS.
\end{proof}

%\section{Equivalence of OLIM and OGULIM on finite-dimensional spaces}

% \section{Relations between the attractivity properties}

% We analyze the relationship of OGULIM, OULIM and OLIM, especially in the context of finite-dimensional systems.

\section{Counterexamples}
\label{sec:counterexamples}

In this section, we provide several counterexamples to show that certain implications do not hold.
\begin{remark}
    \pb{By} \cite[Ex. 1]{MiW18b}, OL $\land$ OLIM$\nimplies$IOS in general even in the case of full-state output.
    \hspace*{\fill}~\QED
\end{remark}

% \begin{remark}
%     Note that as opposed to the finite-dimensional case, OGUAG$\nimplies$IOS in general for infinite-dimensional systems, as can be seen in Theorem \ref{thm:IOSequivalences}.
%     %This is elaborated in \cite[Thm. 5]{MiW18b} for the ISS case, i.e., when the output $y(t,x,u) = \phi(t,x,u)$ for all $t \in I$, $x \in X$, $u \in \U$. %\todo[inline]{This is formally not complete. One has to show UAG$\nimplies$CEP or similar.} \hspace*{\fill}~\QED
% \end{remark}

\begin{example}[IOS$\nimplies$OL]\label{ex:IOSnottoOL}
    Let us consider the following system with a scalar state and output
    \begin{align*}
        \Sigma\colon\quad  \dot x = -x, \quad y(t,x_0) = \sin(\phi(t,x_0)).
    \end{align*}
    where $\phi = \phi(t,x_0)$ is the transition map (independent of $u$) of system $\Sigma$ as given in Definition \ref{def:controlSystem}.
    
    This system is IOS since 
    \begin{align*}
        \abs{y(t,x_0)} = \abs{\sin(\phi(t,x_0))} \leq \abs{\phi(t,x_0)} = e^{-t}\abs{x_0}.
    \end{align*}
    However, for $x_0 = \pi$, $u \equiv 0$ it follows that $y(0,x_0) = 0$ but $y(1,x_0) = \sin(\pi e^{-1}) \neq 0$. Hence, the system is not OL.
    \hspace*{\fill}~\QED
\end{example}

% \begin{example}[IOS$\nimplies$OL]\label{ex:IOSnottoOL}
%     We consider the two-dimensional uncontrolled system $x = (x_1,x_2)^T \in \R^2$ with polar coordinates $r = \sqrt{x_1^2 + x_2^2} = \norm{x}_2$, $\theta = \arg(x_1 + ix_2)$ given by
%     \begin{align*}
%         \dot \theta &= 1, \\
%         \dot r &= -r, \\
%         y(t,x_0) &= \phi_1(t,x_0),
%     \end{align*}
%     with transition map $\phi(\ph,x_0) = (\phi_1(\ph,x_0),\phi_2(\ph,x_0))^T$ of system $\Sigma$ as given in Definition \ref{def:controlSystem}.
    
%     This system is IOS since 
%     \begin{align*}
%         \abs{y(t,x_0)} = \abs{\phi_1(t,x_0)} \leq \norm{\phi(t,x_0)} = e^{-t}\norm{x_0}
%     \end{align*}
%     However, for $x_0 = (0,1)^T$, $u \equiv 0$ it follows that $y(0,x_0) = 0$ but $\abs{y(\frac \pi 2,x_0)} = \abs{-e^{-\frac \pi 2}} > 0$. Therefore, the system is not OL.
% \end{example}

% \mir{If one reformulates Example~\ref{ex:OGULIMandOUGSnottoIOS} in polar coordinates, then the argumentation would simplify, isn't it?}

% \pbc{This makes much more sense here. But then I prefer Example 29 to Example 30. See the new version below.}

% \mir{Yes, Example 29 is better and Ex 30 is not needed. Sorry, I have placed my previous comment wrongly. I have meant from the very beginning Example~\ref{ex:OGULIMandOUGSnottoIOS}.}

%\mir{I have put the commend before a wrong example. }
% Copied from Conference Version
For systems with full-state output, the notions of OL and OUGS both reduce to UGS and both OGULIM and OOULIM become ULIM. However, these notions differ for general output systems and the implication ISS$\iff$ULIM $\land$ UGS (Cor. \ref{cor:ISSsuperpositions}) cannot be extended to output systems in a naive way as stated in the following example.
%\pbc{I think the following was not emphasized enough yet:} 
In particular, OUGS together with neither OULIM nor its variations are strong enough to conclude IOS without OL as a precondition.
\begin{example}
\label{ex:OGULIMandOUGSnottoIOS}
    \emph{We show the following:
    \begin{center}
    OGULIM $\land$ OOULIM $\land$ OUGS$\nimplies$IOS $\lor$ OL.    
    \end{center}
    }
    
    We consider a two-dimensional uncontrolled system with state $x = (x_1,x_2)^T \in \R^2$ given in polar coordinates $\rho = \sqrt{x_1^2 + x_2^2} = \norm{x}_2$ and $\theta = \arg(x_1 + ix_2)$ by
    \begin{align*}
        \Sigma\colon\quad \dot \theta = 1, \quad \dot \rho = 0
        , \quad y(t,x) = \phi_1(t,x)
    \end{align*}
    with transition map (in Cartesian coordinates) $\phi(\ph,x_0) = (\phi_1(\ph,x_0),\phi_2(\ph,x_0))^T$ of $\Sigma$ corresponding to the initial condition $x_0$ represented by $(\theta_0,\rho_0)$ in polar coordinates.
    The system $\Sigma$ is OGULIM and OOULIM as it holds that
    \begin{align*}
        \phi(t,x_0) = 
        \begin{pmatrix}
            \rho_0 \cos(t + \theta_0) \\
            \rho_0 \sin(t + \theta_0)
        \end{pmatrix}
        ,
    \end{align*}
    i.e., $y(t,x_0) = 0$ for $t \in \pi\paren{\N_0 + \frac{1}{2}} - \theta_0$. 
    
    Hence, we can choose the uniform bound $\tau = \pi$ for which for any initial condition $x_0$ there exists $t \leq \tau$ such that $y(t,x_0) =0$, which implies OGULIM and OOULIM.
%    Hence, for all $\eps,r > 0$, we can choose the uniform bound $\tau = \tau(\eps,r)  = \pi$ for which there exists $t \leq \tau$ such that $y(t,x_0) \leq \eps$.
    
    Moreover, $\Sigma$ is OUGS as $\abs{y(t,x_0)} \leq \norm{\phi(t,x_0)}_2 = \rho_0$ $\forall t \in I$, but it is not IOS as $y(t,x_0) = \rho_0$ for  $t = 2\pi\N - \theta_0$.

    Furthermore, the system is not OL as for $x_0 = (0,1)^T$, it holds that $y(0,x_0) = 0$, but $y(\frac{3}{2}\pi,x_0)  = 1$.
    \hspace*{\fill}~\QED
\end{example}

Also, the implication ULIM $\land$ ULS $\land$ BRS$\implies$ISS or even  ULIM $\land$ ULS $\land$ BRS$\implies$UGS cannot be generalized to output systems as stated in the following example. In particular, in Lemma \ref{lem:OOULIMandLocalOLandOBORStoGlobalOL}, OOULIM cannot be exchanged by OGULIM.

\begin{example}[OGULIM $\land$ local OL $\land$ OBORS\hspace{-1pt}$\nimplies$\hspace{-1pt}OL]\label{ex:OGULIMandLocalOLandOBORSnottoGlobalOL}
    We consider the uncontrolled system $x = (x_1,x_2)^T \in \R^2$ with polar coordinates $\rho = \sqrt{x_1^2 + x_2^2} = \norm{x}_2$, $\theta = \arg(x_1 + ix_2)$ given by
	\begin{align*}
		\dot \theta &= \sat\!\paren{\tfrac{1}{\rho}}, \qquad
		\dot \rho = -\sat(\rho),
		\\
		y(t,x_0) &= \sqrt{\phi_1(t,x_0)^2 + \sat(\phi_2^2(t,x_0))},
	\end{align*}
	with transition map $\phi(\ph,x_0) = (\phi_1(\ph,x_0),\phi_2(\ph,x_0))^T$, and $\sat\colon \R_+ \to [0,1]$, $\sat(\rho) = \min\!\braces{\rho,1}$.
	%\begin{align*}
  %       \begin{cases}
		%     r, &\text{if }r \leq 1, \\
  %           2 - e^{r-1} &\text{else}.
		% \end{cases}
	%\end{align*}
    First consider the following: Due to 
    \begin{align}\label{ineq:estimateRadiusOL}
		\dot \rho = -\min\!\braces{\rho,1} < 0,\qquad \rho > 0,
	\end{align}
    $\norm{\phi(t,x_0)}_2$ is strictly decreasing to zero in time and for all $\eps > 0$ and all $\rho_0 \in [0,1]$, it holds that 
    \begin{align}\label{ineq:estimateExampleOGULIM}
        \abs{y(t,x_0)} = \norm{\phi(t,x_0)}_2 = e^{-t}\norm{x_0}_2 \leq \eps
    \end{align}
    for all $t \geq \tau_1\!\paren{\eps,\rho_0} = \max\!\braces{\ln\!\paren{\frac{\rho_0} \eps},0}$ and all $x_0:\ \norm{x_0}_2 \leq  \rho_0$. Here, we used \eqref{ineq:estimateRadiusOL} and that for $\rho_0 \leq \eps$, the bound is already satisfied  at $t = 0$. For $\norm{x_0}_2 > 1$, $t = \norm{x_0}_2 -1$, it holds that $\abs{y(t,x_0)} \leq \norm{\phi(t,x_0)}_2 \leq \norm{x_0}_2-t = 1$. Hence, OGULIM follows by $\tau \coloneq \tau_1 + \max\{\norm{x_0}_2 - 1,0\}$. 
    
    For $\norm{x_0}_2 < 1$, the system is OULS by \eqref{ineq:estimateExampleOGULIM}.
    Therefore, as $\norm{x_0}_2 = y(0,x_0)$ for $\norm{x_0}_2 < 1$ and $\norm{x_0}_2 \geq 1$ implies $y(0,x_0) \geq 1$, it follows that $y(t,x_0) \leq \norm{x_0}_2^2 = y(0,x_0)$ for all $x \in B_1$, $t \in I$, i.e., the system is locally OL.
%	i.e., the system is even uniformly locally stable (OULS with the output equal to the state), and $y(t,x_0) = \sqrt{\phi_1(t,x_0)^2 + \phi_2(t,x_0)^2} = \norm{\phi(t,x_0)}_2$ for $\norm{x_0}_2 = \rho < 1$.
%
	% The system is OGULIM because due to the uniform local stability of the system as for all $\eps,r>0$ it holds that $\abs{y(t,x_0)} \leq e^{-\frac{1}{3}t}\norm{x_0}_2 \leq \eps$ for $t \geq \tau\!\paren{\eps,r} = \max\!\braces{\frac 1 3 \ln\!\paren{\frac r \eps},0}$ for all $\norm{x_0}_2 \leq  r \leq \frac{2}{3}$.
%	The system is OGULIM: Due to uniform local stability, there exists $\gamma \in \KK_\infty$ such that for all $\eps > 0$ and $r \in [0,1]$, it holds that $\abs{y(t,x_0)} \leq \norm{\phi(t,x_0)}_2 \leq e^{-t}\norm{x_0}_2 \leq \eps$
%	for all $t \geq \tau_1\!\paren{\eps,r_0} = \max\!\braces{\ln\!\paren{\frac{r_0} \eps},0}$ and $\norm{x_0}_2 \leq  r_0$. Here, we used \eqref{ineq:estimateRadiusOL} and that for $r_0 \leq \eps$, the bound is already satisfied  at $t = 0$. For $r_0 > 1$ and $t = r_0 -1$, it holds that
%    \begin{align*}
%		\abs{y(t,x_0)} \leq \norm{\phi(t,x_0)}_2 \leq r_0-t = 1
%	\end{align*}
%    Hence, we set $\tau \coloneq \tau_1 + \max\{r_0 - 1,0\}$.

	Furthermore, the system is OBORS as due to
    \begin{align*}
        \dot y(t,x_0) &=
        \begin{cases}
            - \sat(\rho), &\text{if } \abs{\phi_2(t,x_0)} \leq 1,\\
            -\frac{\rho\cos^2(\theta) + \rho^2\cos(\theta)\sin(\theta)\frac{1}{\rho}}{\sqrt{\rho^2\cos^2(\theta)+ 1}}, &\text{if } \abs{\phi_2(t,x_0)} > 1,
        \end{cases}
        \\
        &\leq 0 + \tfrac{\rho \abs{\cos(\theta)}}{\sqrt{\rho^2\cos^2(\theta)+ 1}}\cdot \abs{\sin(\theta)}
        \leq 1 \cdot \abs{\sin(\theta)} 
        \leq 1,
    \end{align*}
    it holds that $y(t,x_0) \leq y(0,x_0) + t$.
	
    The system is not OL as for $\norm{x_0}_2 > 1$, $t < \norm{x_0}_2 - 1$, it holds that $\norm{\phi(t,x_0)}_2 = \norm{x_0}_2 - t$,  $\theta(t) = \theta_0 + \ln(\norm{x_0}_2)-\ln(\norm{x_0}_2 - t)$, and thus %for $x_0 = (0,\rho_0)$, $\rho_0 > 1$, it holds that $y(0,x_0) = 1$ and $y(\frac{1}{2}\ln(\pi \rho_0^2),x_0) = \frac{1}{\sqrt{\pi}\rho}$ for , i.e.,  $y(0,x_0) = 1$.
    for $x_0 = (0,c)$, $c > e^{\frac{\pi}{2}}$ and $t^* \coloneq \norm{x_0}_2\paren{1-e^{-\frac{\pi}{2}}}$, it holds that $y(0,x_0) = 1$, $\theta(t^*) = \frac{\pi}{2}$, and $y(t^*,x_0) = \abs{\phi_1(t^*,x_0)} = ce^{-\frac{\pi}{2}} \to \infty$ for $c \to \infty$.
    \hspace*{\fill}~\QED
\end{example}

One may wonder whether Proposition~\ref{prop:OCAGequivalences} remains valid if we omit BORS in condition \ref{cond:OCAG_1}.
The next example shows that this is in general not the case already for nonlinear infinite-dimensional systems without inputs over Hilbert spaces.
Note that for such systems, the OUAG property reduces to 0-UGATT \cite[Def. 5]{MiW18b}.

More precisely, in Example~\ref{ex:UGATTandUASnottoBRS}, we investigate a system that is FC $\land$ 0-UGATT $\land$ 0-UAS, but is still not BORS. The notion of 0-UAS \cite[Def. 5]{MiW18b} introduced here, is equivalent to local ISS for uncontrolled systems with full-state output.
For time-delay systems, an example of a system with such properties was recently presented in \cite{MaH24}.

Example~\ref{ex:UGATTandUASnottoBRS} is inspired by \cite[Ex. 2]{MiW18b}, where a system was presented which is FC $\land$ 0-GAS $\land$ 0-UAS, but at the same time is not BRS and not 0-UGATT (though the latter was not mentioned in \cite{MiW18b}).

% \pbc{Is this only linked to a Lipschitz condition on bounded balls or do we require a more sophisticated argument?}

% \mir{No, it is only Lipschitz condition. Just it has to be mentioned, and some references to the results where well-posedness under such condition is shown.}

% \pbc{Ok, I added a paragraph. Proving Lipschitzness is quite a long calculation, i.e., $x,y \in B_r$
% \begin{align*}
%     &\norm{f(x) - f(y)}_X^2 \\
%     &\leq \sum_{n \in \N}\abs{x_n - y_n}^2 + \abs{x_n^2((x_0 - 1) - y_n^2(y_0 - 1)}^2 \\
%     &\qquad+ \frac{1}{n^2}\abs{x_n^3 - y_n^3}^2 + \abs{x_0 - y_0}^2 \\
%     &\leq \sum_{n \in \N_0}(1 + (2r(r + 1))^2 + (2r^2)^2)\abs{x_n - y_n}^2 \\
%     &\sum_{n \in \N}(1 + (2r(r + 1))^2 + (2r^2)^2)\norm{x - y}_X^2,
% \end{align*}
% which is why I think, we should not write it down. However, if you think it is necessary, I will add an according paragraph.}

%\mir{Let us start with the fact that the solution right now is not defined at all. You write that $\phi$ is a transition map - but how is it defined?}

%\pbc{To first comment: You did do it, I found it on the third page after Assumption 1. \\
%To second comment: I think, it is done now.}

\begin{example}[FC $\land$ 0-UGATT $\land$ 0-UAS$\nimplies$BORS]
\label{ex:UGATTandUASnottoBRS}
    % \pbc{All these abbreviations are undefined.}
    We consider a control system with full-state output on a Hilbert space 
    \begin{align*}
        X &= \ell^2(\N_0,\R) = \braces{x = (x_n)_{n \in \N_0} \,\middle|\, \norm{x}_X < \infty}\!,
    \end{align*}
    where $\norm{x}_X =\sqrt{\sum_{n \in \N_0}x_n^2}$,
    the space of input values $U = \{0\}$, the space of input functions $\U = \{0\}$, and $Y = X$.

%    \pbc{$U$ and $\U$ are, as the entire system, not specified in Example VI.6. Therefore, it might have been sensible to keep the output spaces so they need not be introduced there.}

    We consider the following system inspired by \cite[Ex. 2]{MiW18b}:
    \begin{align*}
        \Sigma\colon 
        \left\{
        \begin{aligned}
            \dot x_n(s) 
            &= -x_n(s) + x_n^2(s)x_0(s) - x_n(s) \abs{x_n(s)} \\
            &\qquad - \tfrac{1}{n^2}x_n^3(s), & \hspace{-1cm} n  \in \N, \\
            \dot x_0(s) &= -x_0(s), \\ 
            %h(x,u) &=x, %
            y(s,x,u) &= \phi(s,x,u),
        \end{aligned}
        \right.
    \end{align*}
    % where $\phi = \phi(s,(x_n)_{n \in \N_0},u) =(\phi_n(s,x_n,u))_{n \in \N_0}$ is the transition map of $\Sigma$.
    where $x = (x_n)_{n \in \N_0}$ and $\phi$ is the unique maximal mild solution of $\Sigma$.
    
    It can be verified easily that the right-hand side of $\Sigma$ is Lipschitz-continuous on bounded balls. By \cite[Chap. 6, Thm. 1.4]{Paz83}, the existence of a unique maximal mild solution $\phi = \phi(s,(x_n)_{n \in \N_0},u) =(\phi_n(s,x_n,u))_{n \in \N_0}$ as defined in \cite[Chap. 6, Eq. (1.2)]{Paz83} is guaranteed. $\phi$ satisfies \ref{cond:identityProperty}--\ref{cond:cocycleProperty} by construction and therefore is a transition map. Hence, system $\Sigma$ defines a control system with outputs. %The output function is given by $y(s,x,u) = \phi(s,x,u)$.

    For the purpose of the following analysis, it is convenient to define $\Sigma_n$ as the subsystem of $\Sigma$ containing the $n$-th and 0-th component. \pb{The behavior of the subsystems is illustrated in Figure \ref{fig:infdimExample}.}

    System $\Sigma$ is forward complete and 0-UAS \cite[Def. 5]{MiW18b} with domain of attraction $\{x \in X\,|\,\forall n \in \N_0\colon \abs{x_n} \leq r < 1 \}$ by the same arguments as used in \cite[Ex. 2]{MiW18b} (the additional term $-x_n(s) \abs{x_n(s)}$ on the right-hand side of the first line component only causes faster convergence to the equilibrium). 
    
    We show that $\Sigma$ is 0-UGATT \cite[Def. 5]{MiW18b}. To this end, we show that for fixed $n \in \N$, $\Sigma_n$ with initial condition $\abs{x_n(0)}, \abs{x_0(0)} \leq r$ for any $r > 0$
    satisfies $\abs{x_n(s)}, \abs{x_0(s)} \leq \frac{1}{2}$ after finite time $s$ \pb{not depending} on $n \in \N$. It holds that
    \begin{align*}
        \abs{x_0(s)} = e^{-s} \abs{x_0(0)} \leq e^{-s}r \leq \tfrac 1 2
    \end{align*}
    for all $s \geq s^*:=\max\!\braces{\ln\!\paren{2r},0}$. For $s \geq s^*$, $|x_n(s)|$ can be bounded by the solution of the initial value problem \pb{
    \begin{align}
        \begin{aligned}
            \dot z(s) &=-z(s) - \tfrac{1}{2}z^2(s), \\
        z(s^*) &= \abs{x_n(s^*)},
        \end{aligned}
        \label{eq:ODEinPDEExample}
    \end{align}
    since for the right-hand side and $z \in \R$, it follows that
    \begin{align*}
        -z - \tfrac{1}{2}z^2\geq -z \pm z^2x_0 - z\abs{z} - \tfrac{1}{n^2}z^3.
    \end{align*} 
    Thus, for all $s \geq s^*$, it holds that $\dot z(s) \geq \frac{\diff}{\diff s}\abs{x_n(s)}$, i.e., solving \eqref{eq:ODEinPDEExample} yields}
    \begin{align*}
        \abs{x_n(s)} \leq z(s) = 2 \cdot \paren{\paren{1 + \tfrac{2}{z(s^*)}}e^{s-s^*} - 1}^{-1}, \quad s \geq s^*.
    \end{align*}

%    \mir{It would be cleaner to take some $\epsilon$, and show that you can come to arbitrary $\eps$-neighborhood. Then you can show 0-ULIM, and then in combination with 0-UAS you get 0-UGATT - and it should be a reference in some of my papers for such a lemma. Otherwise, there is a gap in the proof.}

%    \pbc{In my opinion this is circumstantial. Of course the train of thought might be elaborated a bit more than it is right now, but after bringing the trajectory to the domain of attraction of 0-UAS $\{x \in l^2\,|\,\forall n \in \N_0\colon x_n \leq r < 1 \}$ (see Ex. 2, MiW18b), 0-UGATT follows automatically.  There is no $\eps$-neighborhood and 0-ULIM needed.}
    
    As $\Sigma$ is forward complete, $z(s^*) = \abs{x_n(s^*)} < \infty$ and $\abs{x_n(s)} \leq z(s) \leq \frac{1}{2}$ for all $s \geq s^* + \ln(5)$. Therefore, the trajectory reaches the domain of attraction of 0-UAS for some $s \leq s^* + \ln(5)$. Then, due to 0-UAS and the cocycle property, it follows that $\Sigma$ is 0-UGATT.
    
    Next, we show that $\Sigma$ is not BORS. We consider $\Sigma_n$ \pb{and construct a lower bound $z$ such that $x_n(s) \geq z(s)$ for $s \geq 0$ such that $x_n(s) \leq n$ and appropriate initial conditions. Let $x_0(0) = 2e$, such that $x_0(s) = 2e \cdot e^{-s} \geq 2$ for all $s \in [0,1]$. We define $z$ by the differential equation
    \begin{align*}
        \dot z(s) = - 2 z(s) + z^2(s).
    \end{align*}
    Indeed, as 
    \begin{align*}
        - 2 z + z^2 \leq -z \pm z^2x_0(s) - z\abs{z} - \tfrac{1}{n^2}z^3.
    \end{align*}
    for all $s \in [0,1]$ and $z \in [0,n]$, $n \in \N$, it follows that $\dot z(s) \leq \dot x_n(s)$, i.e., $z$ is a lower bound for $x_n$ for appropriate initial condition $x_n(0) = z = c$. Moreover, for $c > 2$, $z$ has finite escape time.}
    We choose $c > 2$ such that \pb{$\dot z(s) = - 2 z(s) + z^2(s)$} blows up to infinity at $s = 1$.
    % For $x_0 \geq 2$ and for $x_n \in [0,n]$, $n \in \N$, it holds that
    % \begin{align*}
    %     \dot x_n(s) \geq - 2 x_n(s) + x_n^2(s).
    % \end{align*}
    % We choose $c \geq 1$ such that $\dot x_n(s) = - 2 x_n(s) + x_n^2(s)$ with initial condition $x_n = c$ blows up to infinity at $s = 1$. Then for $x_0(0) =2e$, we obtain $x_0(s) = 2e \cdot e^{-s} \geq 2$ for all $s \in [0,1]$. 
    Hence, for $(x_n(0),x_0(0))^T= (c,2e)^T$, there exists $\tau_n \in (0,1)$, such that $x_n(\tau_n) = n$. Now, for $j \in \N$, we define the initial condition $x^j = (x_n^j)_{n \in \N_0} \in X$  for which
    \begin{align*}
        x^j_n =
        \begin{cases}
            2e, &\text{if } n=0,\\
            c, &\text{if } n=j,\\
            0, &\text{else}.\\
        \end{cases}
    \end{align*}
    It holds that 
    \begin{align*}
        \norm{y(s,x^j,u)}_Y &= \norm{\phi(s,x^j,u)}_X \geq \phi_j(s,x^j,u)
    \end{align*}
    and for $d \coloneq 2\sqrt{c^2 + 4e^2}$ it holds that
    \begin{align*}
        &\sup\nolimits_{s\geq 0,\, x \in B_d}\norm{y(s,x,u)}_Y 
        = \sup\nolimits_{s\geq 0,\, x \in B_d}\norm{\phi(s,x,u)}_X \\
        &\geq \sup\nolimits_{s\geq 0}\,(\phi_j(s,x^j,u))_j\geq x_j(\tau_j)
        \geq j 
    \end{align*}
    for any $j \in \N$. Therefore, $\Sigma$ is not BORS.
    \pb{\begin{figure}[h]
    \centering
    \begin{tikzpicture}
    \pgfplotsset{
    y coord trafo/.code={
        \pgfmathparse{
            #1>5?
                #1+70
            :
                #1*15
        }
    },
    % y coord inv trafo/.code={
    %     \pgfmathparse{
    %         #1>30?
    %             #1-20
    %         :
    %             #1/3
    %     }
    % }
}
        \begin{axis}[
            width=\columnwidth,
            height=0.5\columnwidth,
            axis lines=center,
            xlabel style={below},
            ylabel style={above},
            xtick={1,6},
            xticklabels={1,$s^*$},
            ytick=\empty,
            xlabel={$t$},
            xmax=5,
            xmin=0,
            ymax=65,
            ymin=0,
            extra y ticks={0,1,2.3130,5,20,40,60},
            extra y tick style={
                %major tick length=0,
                tick label style={
                    font = \scriptsize,
                    anchor= east,
            }},
            extra y tick labels={0,1,$c$,5,20,40,60},
            smooth
            ]
            % Data for n = 1
            \addplot[color=uniwueblue,
            unbounded coords=jump
            ] table{images/rawdata/plot_1.txt};
            % Data for n = 2
            \addplot[color=uniwueblue!80,
            unbounded coords=jump
            ] table{images/rawdata/plot_2.txt};
            % Data for n = 3
            \addplot[color=uniwueblue!60,
            unbounded coords=jump
            ] table{images/rawdata/plot_3.txt};
            % Data for n = 4
            \addplot[color=uniwueblue!40,
            unbounded coords=jump
            ] table{images/rawdata/plot_4.txt};
            % Data for n = 5
    %						\addplot[color=uniwueblue!40,
    %						very thick,
    %						unbounded coords=jump
    %						] table{images/rawdata/plot_5.txt};
            % Exact lower bound:
            %\addplot[domain=0:0.99,black,smooth] {2/(-exp(2*x-2)+1)};
            % Numeric lower bound:
            \addplot[color=black,
            unbounded coords=jump
            ] table{images/rawdata/plot_lowerbound.txt};
            % Exact upper bound:
            %\addplot[domain=3.1411:10,black,smooth] {2/((1 + 2/infinity)*exp(x-1.5317)-1)};
            % Numeric upper bound:
            \addplot[color=red,
            unbounded coords=jump
            ] table{images/rawdata/plot_upperbound.txt};
            %dashed horizontal n
            %\draw[dashed,black] (0,3) to (0.475,3);
            % initial value
            \addplot[soldot] coordinates{(0,2.3130)};
            % dashed vertical 1
            \draw[dashed,black] (1,0) to (1,50);
            % constant horizontal 5
            \draw[double,black] (0,5) to (10,5);
            \node at (0.8,1) {$\phi_1$};
            \node at (1.2,2.4) {$\phi_2$};
            \node at (1.4,3.6) {$\phi_3$};
            \node at (1.9,4.2) {$\phi_4$};
        \end{axis}
    \end{tikzpicture}
    \caption{\pb{Plot for several components $\phi_n$ with initial condition $x_n = c$. Local lower bound for $\phi(n) \leq n$ in black and upper bound for global attractivity in red.}}
    \label{fig:infdimExample}
\end{figure}}
    \hspace*{\fill}~\QED 
\end{example}

%\mir{I do not think that the next example is correct. See Example 2.46 in my book. That reasoning will work also here.}

The following example treats a system with full-state output. Therefore, it also demonstrates that for the concepts of bUAG and UAG introduced in \cite[Def. 2.44]{Mir23}, it holds that bUAG$\nimplies$UAG.

% \pbc{Shouldn't it be "OULS" instead of "ULS" below?}
% \mir{Please follow your inner feeling in such questions.}

\begin{example}\label{ex:OUAGnottoOGUAG} \emph{We show \\FC $\wedge$ OUAG with zero gain  $\wedge$  \pb{O}ULS $\nimplies$ OGUAG $\vee$ BORS}

    We modify system $\Sigma$ in Example \ref{ex:UGATTandUASnottoBRS} by choosing input value space $U = \R$ and $\U = \pb{\LL^\infty(I,U)}$ and the transformation of the time axis given by $s = s(t, u) \coloneq \int_0^t \frac{1}{1 + u^2(\theta)}\ \diff \theta \in \brackets{\frac{1}{1 + \norm{u}_\U^2}t, t}$, i.e.,
    \begin{align*}
        \widetilde \Sigma\colon 
        \left\{
        \begin{aligned}
            \dot x_n(t) 
            &= \tfrac{-x_n(t) + x_n^2(t)x_0(t) - x_n(t)\abs{x_n(t)} - \tfrac{1}{n^2}x_n^3(t)}{1 + u^2(t)}, \, n  \in \N, \\
            \dot x_0(t) &= -\tfrac{x_0(t)}{1 + u^2(t)}, \\
            &\hspace{-.75cm}\widetilde y(t,x,u) = \widetilde \phi(t,x,u),
        \end{aligned}
        \right.
    \end{align*}
    where $\widetilde \phi = \widetilde \phi(s,(x_n)_{n \in \N_0},u) =(\widetilde \phi_n(s,x_n,u))_{n \in \N_0}$ is the transition map of $\widetilde \Sigma$.

    Note that the transformation of the time axis causes slower time evolution for larger $\norm{u}_U$, e.g., for constant inputs $u \equiv u(0)$, it holds that $s = \frac{1}{1 + u^2(0)}t$. The relation between the flow of $\Sigma$ and $\widetilde \Sigma$ is then given by
    \begin{align*}
        \phi\!\paren{\int_0^t \frac{1}{1 + u^2(\theta)}\ \diff \theta,x,u} = \widetilde \phi(t,x,u)
    \end{align*}
    for all $t \in I$, $x \in X$ and $u \in \U$. 
    
 %   \pbc{The time axis transformation has to be solved for $t$ as solving for $s$ is quite unpleasant.}
    
    As $\Sigma$ is 0-UGATT and $t \in \brackets{s,(1 + \norm{u}_\U^2)s}$, for every $C > 0$ and $u \in B_{C,\U}$, $\widetilde \Sigma$ satisfies OUAG with zero gain.
		   
  %    \pbc{Do I have to explain the above more extensively?}

		% \mir{Yes, you should discuss it more extensively. It would be nice to write down the relation between the flow of $\Sigma$ and the flow of $\widetilde\Sigma$. Then the argumentation should be clear.}

  %       \pbc{See above paragraph "Note that ..."}
		
		% \mir{Much more problems I have with the next argumentation: what is $x^j$, $u^j$, $\tau_j$, $c$, $d$ etc. not explained at all. You should fully rewrite it. But I believe that the example will work. }
  
  %       \pbc{I added some lines.}
        
    However, $\widetilde \Sigma$ is not OGUAG: For all $j \in \N$, we consider $x^j \in X$, $\tau_j \in I$ and $c, d > 0$ as defined in Example \ref{ex:UGATTandUASnottoBRS}.
    
    $(\phi(s,x^j,u))_j$ is smaller than the solution of $\dot z =2ez^2$ with initial condition $z(0) = c$, i.e., $(\phi(s,x^j,u))_j \leq z(s) =\frac{c}{1 - 2ecs}$ for $s < \frac{1}{2ec}$. Especially, $(\phi(s,x^j,u))_j \leq 2c$ for $s \leq \frac{1}{4ec}$, which means that $\tau_j \in \left[\frac{1}{4ec},1\right)$ for sufficiently large $j \in \N$.
    
    By the transformation of the time axis, we obtain for any $\tau \geq 1$ and $u^j$ defined by $u^j \equiv \sqrt{\frac{\tau}{\tau_j} - 1}$ that $t = (1 + \|u^j\|^2_{\U}) s = \frac{\tau}{\tau_j}s$. Then, for $x^j \in B_d$, $u^j \in B_{2\sqrt{4ec \tau - 1},\U}$, it holds that
    \begin{align*}
        \norm{\widetilde y(\tau,x^j,u^j)}_Y 
        &=\|\widetilde \phi(\tau,x^j,u^j)\|_X \\
        &\geq \big(\widetilde \phi\!\paren{(1 + \|u^j\|^2_{\U})\tau_j,x^j,u^j}\big)_j \\
        &= (\phi\!\paren{\tau_j,x^j,u^j})_j
        \geq j \to \infty \text{ for } j \to \infty.
    \end{align*}
    Hence, for every $\gamma \in \KK_\infty$, some $\eps > 0$, $r \coloneq d$ and every $\tau = \tau(\eps,r) \geq 1$, there exist $x \in B_r$ and $u \in \U$ such that 
    \begin{align*}
        \norm{\widetilde y(\tau,x,u)}_Y &> \eps + \gamma(\norm u_\infty).
    \end{align*}
    This means, that $\widetilde \Sigma$ is not OGUAG.
    \hspace*{\fill}~\QED 
\end{example}

% \todo[inline]{Discussion with Hans Zwart:
    
%     - OL is not a good property as it is very strong and implies that for $y(0) = 0$ that $y \equiv 0$ if the input is $u \equiv 0$.
    
%     - Instead of OL one could try to investigate an integral version of OL which is more meaningful
    
%     - Investigate a global version of OULS (cf. OUGS).
% }

% \begin{proof}
%     We proof the first statement: Let $\norm{y(t,x,u)} \leq \sigma\!\paren{\phi(t,x,u)}$ for all parameters $(t,x,u) \in I \times X \times \U$ and $\Sigma$ be OULS, i.e.
%     \begin{align*} %\label{ineq:OCEP}
% 		\norm{x}_X \leq \delta, \ \norm{u}_{\U} \leq \delta,\  t \in I \qquad \implies \qquad \norm{y(t,x,u)}_Y \leq \eps.
% 	\end{align*}
%     Then we can choose $\widetilde \delta = \widetilde \delta(\eps) = \min\!\braces{\delta, \sigma^{-1}(\delta)}$. Hence, from
%     \begin{align*}
%         \norm{x}_X \leq \delta, \ \norm{u}_{\U} \leq \delta,\  t \in I \qquad \implies \qquad \norm{y(t,x,u)}_Y \leq \eps.
%     \end{align*}
%     We show the second statement: Let $\norm{y(t,x,u)} \leq \sigma\!\paren{\phi(t,x,u)}$ for all parameters $(t,x,u) \in I \times X \times \U$ and $\Sigma$ be BORS, i.e. for all $C > 0$ and $\tau \in I$ it holds that
% 	\begin{align*} %\label{ineq:ISS}
% 		\sup_{\norm{x}_X, \norm{u}_\U < C, \ t < \tau}\norm{y(t,x,u)}_Y < \infty.
% 	\end{align*}
%     Then, for every $\widetilde C$ it follows that
%     \begin{align*} %\label{ineq:ISS}
% 		\sup_{\norm{y}_Y, \norm{u}_\U < C, \ t < \tau}\norm{y(t,x,u)}_Y  \leq \sup_{\norm{y}_Y, \norm{u}_\U < C, \ t < \tau}\norm{y(t,x,u)}_Y< \infty.
% 	\end{align*}
% \end{proof}

\section{Conclusion}\label{sec:conclusion}
The main results of this work are superposition theorems for IOS, IOS $\land$ OL and OCAG for infinite-dimensional systems \pb{(see Fig. \ref{fig:mainResult})}. Thereby, we set the basis for developing the infinite-dimensional IOS theory.
On this path, it is necessary to introduce several stability and attractivity notions and to systematically analyze the relations between them.

We prove that our results generalize the existing theory for  ODE systems \cite{ISW01}. However, by means of counterexamples, we show that not all of the characterizations for ODEs hold in general in the infinite-dimensional case, e.g., OUAG$\nimplies$OGUAG$\nimplies$IOS. 

Our results generalize the ISS superposition theorems from \cite{MiW18b} to systems with outputs. In our setting, several stability notions appear that reduce to the same notions for systems with full-state output, e.g., OGULIM and OOULIM both reduce to ULIM in the case of full-state output. Even more, the notions OGULIM and OOULIM together combined with OUGS are not sufficient to conclude IOS (see Example \ref{ex:OGULIMandOUGSnottoIOS}), opposed to ULIM $\land$ UGS$\iff$ISS which holds for systems with full-state output \cite[Thm. 5]{MiW18b}.

We show that OOULIM, local OL and OBORS together imply OL, and we characterized ISS by IOS and IOSS.

In our future work, we aim at further developing the IOS theory for infinite-dimensional systems by providing Lyapunov characterizations and small gain theorems for the analysis of interconnected systems. \pb{We will illustrate its applicability by practical examples.}

\section*{References}
\bibliographystyle{IEEEtran}
\bibliography{Bibliothek}

\begin{IEEEbiography}[{\includegraphics[width=1in,height=1.25in,clip,keepaspectratio]{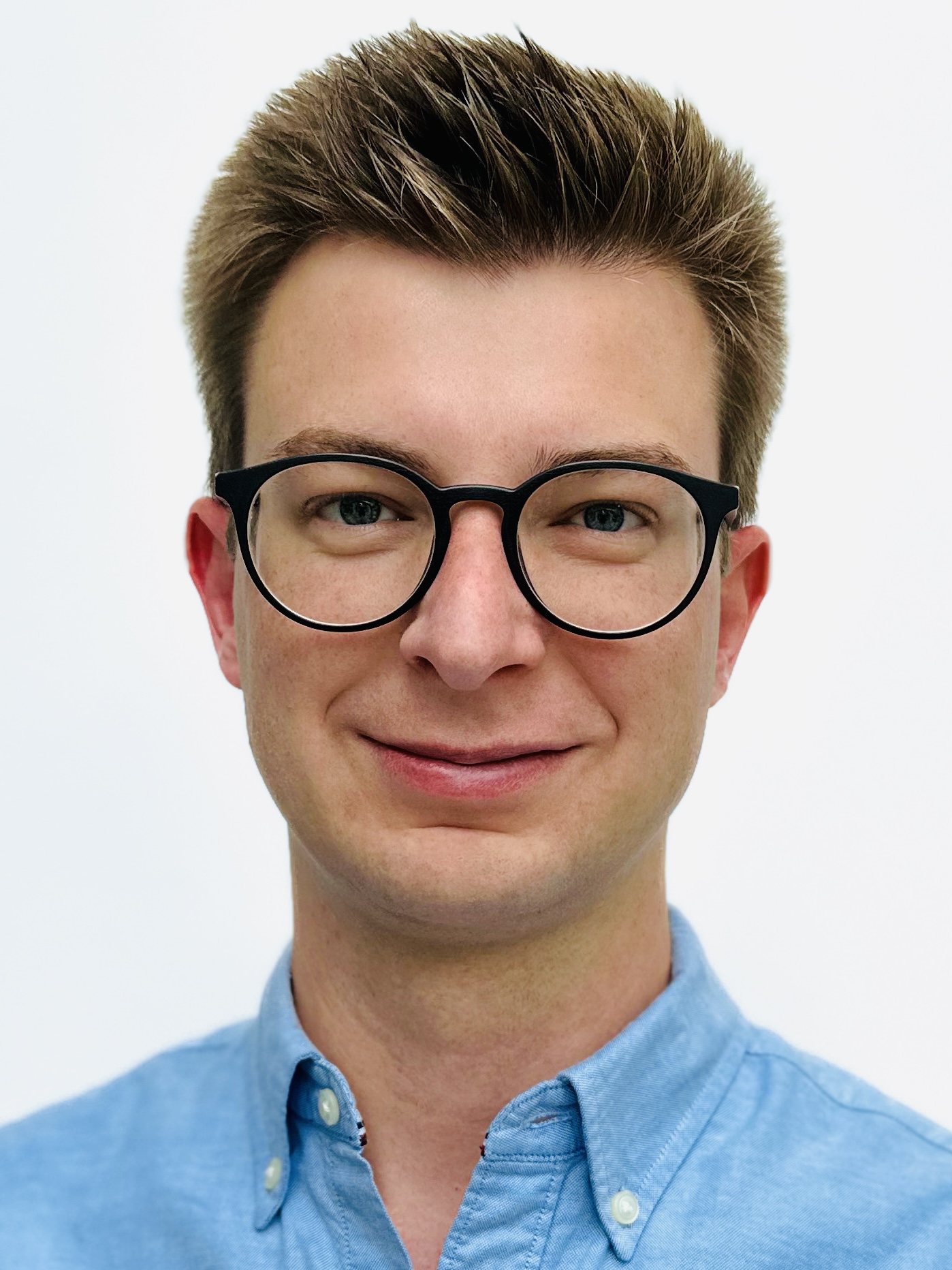}}]{Patrick Bachmann} was born in Speyer, Germany, in 1992. He received his Bachelor's degree in business mathematics from the University of Mannheim, Germany, in 2015 and his Master's degree in Mathematics from Karlsruhe Institute of Technology in 2018. %From 2018 to 2021, he was a research assistant at the Faculty of Mechanical and Process Engineering of the Technical University of Kaiserslautern, Germany. Since 2021, he is a teaching assistant at the Institute of Mathematics at the University of Würzburg in Germany where he is also pursuing his Ph.D. degree in Mathematics under the supervision of Sergey Dashkovskiy and Andrii Mironchenko. 
\pb{He worked as a research assistant at the Technical University of Kaiserslautern, Germany, and the University of Würzburg, Germany. Currently, he is working at the University of Bayreuth, Germany, while pursuing his PhD degree in Mathematics under supervision of Sergey Dashkovskiy and Andrii Mironchenko.} His research interests include impulsive systems, stability and control theory, Lyapunov functions and infinite-dimensional systems.
\end{IEEEbiography}

\begin{IEEEbiography}[{\includegraphics[width=1in,height=1.25in,clip,keepaspectratio]{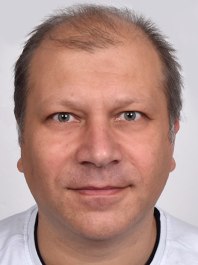}}]{Sergey Dashkovskiy} received the M.Sc.
degree in applied mathematics from the
Lomonosov University of Moscow, Russia, in 1996, 
the Ph.D. degree in mathematics from the University of Jena, Germany,
in 2002, and the Habilitation (venia legendi) degree in mathematics 
from the University of Bremen, Bremen, Germany, in 2009.
He held positions with the Arizona State
University, Tempe, AZ, USA, the University of
Bayreuth, Bayreuth, Germany, and the University of Applied Sciences Erfurt, Germany. 
Since 2016, he has been a Professor and the Head of the Research Group Dynamics and
Control, Institute for Mathematics, University of Würzburg, Germany. 
His research interests are in stability theory of dynamical
systems and networks.
Dr. Dashkovskiy is currently the Editorial Board Member of several
journals related to this research area, in particular, of IEEE Transactions on Automatic Control 
and IET Journal on Control Theory and Applications.
\end{IEEEbiography}

\begin{IEEEbiography}[{\includegraphics[width=1in,height=1.25in,clip,keepaspectratio]{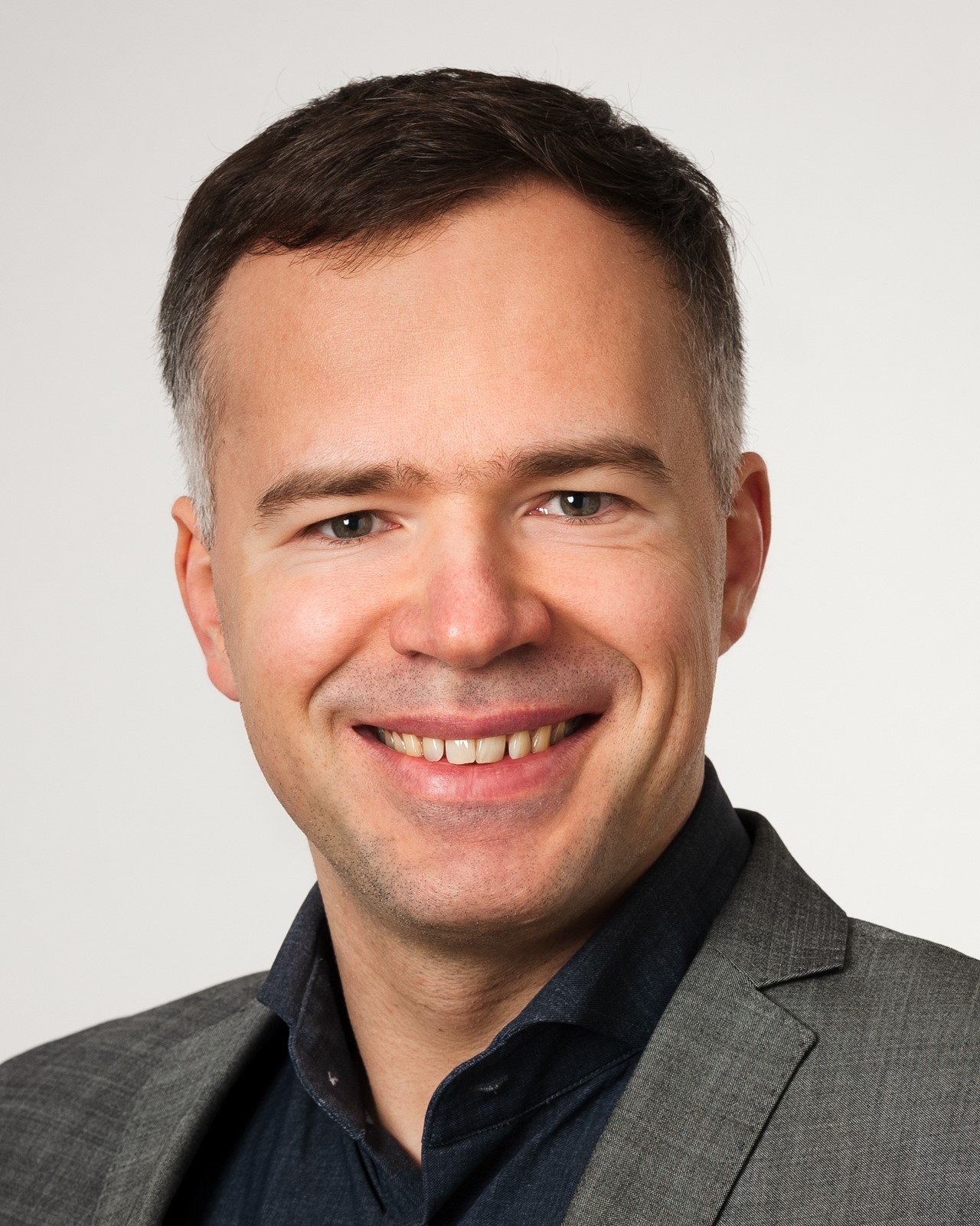}}]{Andrii Mironchenko} (M'21-SM'22) %\pbc{Please check again.}
was born in 1986 in Odesa, Ukraine. \amc{He received his M.Sc. degree from Mechnikov Odesa National University, Ukraine; his} Ph.D. degree in mathematics from the University of Bremen, Germany in 2012, and a Habilitation degree from the University of Passau, Germany (honored by 2024 Outstanding Habilitation Award of the University of Passau). He was a Postdoctoral Fellow of the Japan Society for Promotion of Science (2013--2014). 
\amc{Since December 2024, he has been with the Department of Mathematics, University of Bayreuth, Germany.}
Dr. Mironchenko is the author of the monograph „Input-to-State Stability: Theory and Applications“ (Springer, 2023) and of over 70 journal and conference papers on control theory and applied mathematics. 
A. Mironchenko is an Associate Editor in Systems \& Control Letters and is a co-founder and co-organizer of the biennial Workshop series “Stability and Control of Infinite-dimensional Systems” (SCINDIS, 2016 -- now). He received the 2023 IEEE CSS George S. Axelby Outstanding Paper Award and a Heisenberg grant from German Research Foundation (2024).
He works in stability and control of nonlinear systems, distributed parameter systems, hybrid systems, and applications of control theory.% to biological systems and distributed control.
\end{IEEEbiography}

\end{document}